\renewenvironment{proof}[1][\proofname]{\par
  \pushQED{\qed}%
  \normalfont \topsep6\p@\@plus6\p@\relax
  \trivlist
  \item[\hskip\labelsep
        \textsc{#1.}]\ignorespaces
}{%
  \popQED\endtrivlist\@endpefalse
}
\newcommand{\addtheorem}[3]{
	\newtheorem{#1}{#3}							
	\makeatletter								
	\expandafter\def\csname c@#1\endcsname{\csname c@#2\endcsname}		
	\expandafter\def\csname p@#1\endcsname{\csname p@#2\endcsname}
	\makeatother								
	\expandafter\def\csname the#1\endcsname{\csname the#2\endcsname}	
	\expandafter\def\csname #1autorefname\endcsname{#3}			
	}
\newtheoremstyle{joltthm}	
		{\baselineskip}	
		{\baselineskip}	
		{\itshape}	
		{}		
		{\bfseries}	
		{.}		
		{ }		
		{}		
\newtheoremstyle{joltdefn}	
		{\baselineskip}	
		{\baselineskip}	
		{}		
		{}		
		{\bfseries}	
		{.}		
		{ }		
		{}		
\theoremstyle{joltthm}
\newtheorem{theorem}{Theorem}[section]
\theoremstyle{joltdefn}
\numberwithin{equation}{section}
\newcommand{\ie}{\emph{i.e.}\ }
\newcommand{\via}{\emph{via}\ }
\newcommand{\etc}{\emph{etc.}\ }
\newcommand{\cf}{\emph{cf.}\ }
\newcommand{\odd}{\text{odd}}
\newcommand{\NN}{\mathbb N}
\newcommand{\RR}{\mathbb R}
\newcommand{\ZZ}{\mathbb Z}
\newcommand{\Lie}{\mathcal L}
\newcommand{\XX}{\mathfrak X}
\newcommand{\Diff}{\mathrm{Diff}}
\newcommand{\supp}{\mathop{\rm supp}\nolimits}
\newcommand{\OO}{\mathcal{O}}
\newcommand{\Ber}{\mathop{\rm Ber}\nolimits}
\newcommand{\id}{\mathop{\rm id}\nolimits}
\newcommand{\pr}{\mathop{\rm pr}\nolimits}
\newcommand{\sgn}{\mathop{\rm sgn}\nolimits}
\newcommand{\codim}{\mathop{\rm codim}\nolimits}
\newcommand{\defi}{\coloneqq}			
\newcommand{\Wedge}{{\textstyle\bigwedge}}
	\def\fr #1/#2{\ensuremath{\frac{#1}{#2}}}
	\def\lr(#1){{\left(#1\right)}}
	\def\bg(#1(#2)#3){{\big(#1(#2)#3\big)}}
	\def\diff#1^#2{\ensuremath{\partial_{#1}^{#2}}}
	\def\der#1/#2{\ifthenelse	{\equal{#1}{}}
								{\ensuremath{\partial_{#2}{#1}}}
								{\ensuremath{\frac{\partial #1}{\partial #2}}}
					}
	\def\derf#1/#2{\ifthenelse	{\equal{#1}{}}
								{\ensuremath{\frac{\partial #1}{\partial #2}}}
								{\ensuremath{\partial_{#2}{#1}}}
					}
	\def\D#1/#2{\ensuremath{\frac{D#1}{D#2}}}
	\def\Da#1/#2{\ensuremath{\frac{|D#1|}{|D#2|}}}
	\def\multi(#1,#2){\ifthenelse	{\equal{#1}{0}}
									{{\mathbb Z}_2^{#2}}
									{\ifthenelse{\equal{#2}{0}}
												{{\mathbb N}_0^{#1}}
												{\ensuremath{{{\mathbb N}_0^{#1}\!\times\!{\mathbb Z}_2^{#2}}}}
									}
					}
\newcommand{\Cref}[1]{Equation~\eqref{#1}}
\begin{document}

\begin{abstract}
	We investigate the Berezin integral of non-compactly supported quantities. In the framework of supermanifolds with corners, we give a general, explicit and coordinate-free repesentation of the boundary terms introduced by an arbitrary change of variables. As a corollary, a general Stokes's theorem is derived---here, the boundary integral contains transversal derivatives of arbitrarily high order.
	
	Keywords: Berezin integral, Stokes's theorem, change of variables, non-compact supermanifold, boundary term, manifold with corners.
	
	MSC (2010): 58C50, 58A50 (Primary); 58C35 (Secondary). 
\end{abstract}

	\title{Integration on Non-Compact Supermanifolds}
	\author{Alexander Alldridge}
	\address{Mathematisches Institut\\ÊUniversit\"at zu K\"oln\\ Weyertal 86--90\\ 50931 K\"oln\\ Germany}
	\email[A.~Alldridge]{alldridg@math.uni-koeln.de}
	\author{Joachim Hilgert}
	\address{Institut f\"ur Mathematik\\ÊUniversit\"at Paderborn\\ Warburger Str.~100\\ 33098 Paderborn\\ Germany}
	\email[J.~Hilgert]{hilgert@math.upb.de}	
	\author{Wolfgang Palzer}
	\address{Mathematisches Institut\\ÊUniversit\"at zu K\"oln\\ Weyertal 86--90\\ 50931 K\"oln\\ Germany}
	\email[W.~Palzer]{palzer@math.uni-koeln.de}

	\thanks{This research was supported by the Leibniz junior independent research group grant and SFB/Transregio 12 ``Symmetries and Universality in Mesoscopic Systems'', funded by Deutsche Forschungsgemeinschaft (DFG)}

\maketitle

\section{Introduction}

Supermanifolds were introduced by Berezin, Leites and Konstant in the 1970s as a mathematical framework for the quantum theory of commuting and anticommuting fields. A remarkable contribution was Berezin's definition of his integral, in Ref.~\cite{Be66}, predating the definition of supermanifolds by several years, and providing at the time sufficient indication that a reasonable supersymmetric analysis should exist. 

Despite its utility, the integral suffers from a fundamental pathology: Only the integral of \emph{compactly supported} quantities is well-defined in a coordinate independent form---changes of variables introduce, in general, so-called \emph{boundary terms}. This can be seen as a major obstacle in the development of global superanalysis. 

For example, although Stokes's theorem 
\begin{equation}\label{eq:stokes-intro}
	\int_Md\omega=\int_{\partial M}\omega
\end{equation}
has been extended to supermanifolds by Bernstein and Leites \cite{BL77}, this extension supposes that the supermanifold structure on the boundary $\partial M$ enjoys a rather strong compatibility requirement. In fact, even for compactly supported integrands $\omega$, the conclusion of the theorem \emph{fails} in general, unless this assumption is made (\cf \autoref{ex:4.6} below). 

An invariant definition of the integral can however be made, on the basis of the following simple observation: For any supermanifold $M$, there exist morphisms $\gamma\colon M\to M_0$---which we call \emph{retractions}---which are left inverse to the canonical embedding $j_M\colon M_0\to M$. Any retraction $\gamma$ is a submersion whose fibres have compact base; thus, there is a well-defined \emph{fibre integral} $\gamma_!$ which takes Berezin forms on $M$ to volume forms on $M_0$, and one may define 
\begin{equation}\label{eq:intretractdef}
\int_{(M,\gamma)}\omega=\int_{M_0}\gamma_!(\omega)\ . 
\end{equation}

Taking pullback retractions, this definition is now trivially well-defined under coordinate changes. Furthermore, whereas retractions are non-unique in general, for certain classes of supermanifolds---\emph{e.g.}, Lie supergroups $G$, homogeneous $G$-supermanifolds, and superdomains---there exist canonical retractions.

This framework allows us to give an explicit description of the behaviour of the integral under coordinate changes. To state our main result (\autoref{thm:trafo}), let $N\subset M^{p|q}$ be an open subspace of a supermanifold whose underlying space $N_0\subset M_0$ is a manifold with corners. That is, we have $N_0=\{\rho_i>0\mid i=1,\dotsc,n\}$ for some functions $\rho_i$ which define \emph{boundary manifolds} $H_0=\{\rho_{i_1}=\dotsm=\rho_{i_k}=0,\rho_j>0\ (j\neq i_m)\}$. Let $\gamma$, $\gamma'$ be retractions on $N$. On each $H_0$, one considers the supermanifold structure $H$ induced by $\gamma^*(\rho_{i_m})$ and the retraction $\gamma_H$ induced by $\gamma$. Let $D_i$ be even vector fields such that $D_i(\gamma^*(\rho_j))=\delta_{ij}$ on suitable neighbourhoods of $\{\gamma^*(\rho_i)=\gamma^*(\rho_j)=0\}$. 

Then, for any Berezin density $\omega$ such that the integrals exist,
\begin{align*}
	\int_\lr(N,\gamma')\omega=\int_\lr(N,\gamma)\omega+\sum_{H\in B(\gamma^*(\rho))}\sum_{j\in J_H}\pm\int_\lr(H,\gamma_H)\big(\omega_j.D^{j\downarrow}\big)\big|_{H,\gamma^*(\rho)}.
\end{align*}

Here, we sum over all $H=\{\gamma^*(\rho_{i_1})=\dotsm=\gamma^*(\rho_{i_k})=0\}$ and all multi-indices $j\in J_H=\NN^{\{i_1,\dotsc,i_k\}}$; moreover, $\omega_j\defi\fr 1/{j!}\left(\gamma'^*(\rho)-\gamma^*(\rho)\right)^j\omega$ and $j\mathord\downarrow$ denotes the multi-index $j$ with entries reduced by one. The differential operators on the right hand side are of degree up to $\frac q2$. 

From this change of variables formula, we deduce a version of Stokes's theorem which is valid for an arbitrary supermanifold structure on the boundary (\autoref{cor:stokes2}). Compared to \Cref{eq:stokes-intro}, the right hand side depends not only on $\omega|_{\partial M}$, but on transversal derivatives up to order $\frac q2$.

The question of defining the integral of non-compactly supported Berezinians was first studied by Rothstein \cite{Ro87} in his seminal paper. His fundamental insight was that the integral becomes well-defined if instead of the Berezinian sheaf, one considers the sheaf of super-differential operators with values in volume forms. This insight is vital---indeed, Rothstein's techniques form the basis of our investigations, and one may view \Cref{eq:intretractdef} as an attempt to translate Rothstein's definition of the Berezin integral \emph{via} the `Fermi integral' to the realm of ordinary Berezinians. 

For applications to superanalysis, Rothstein's sheaf is somewhat unwieldy, since it is an $\mathcal O_M$-module of infinite rank. For example, in the context of homogeneous supermanifolds, one frequently fixes integrands by invariance. Of course, this can only be done for $\mathcal O_M$-modules of rank one, which favours the Berezinian sheaf as a tool for superanalysis. 

The applications we have in mind come from the spherical harmonic analysis on Riemannian symmetric supermanifolds, in particular, the study of orbital and Eisenstein integrals in the spirit of Harish-Chandra. Besides its relation to representation theory \cite{A11}, this subject is of high current interest in mathematical physics, in the study of $\sigma$-model approximations of invariant random matrix ensembles, as are applied to disordered metals and topological insulators \cite{Zir91,HHZ05,LPS10,DSZ10,GLMZ11}. 

\smallskip
Let us end with a brief synopsis of our paper. In \autoref{integraldef}, we recall some basic facts and define the integral of Berezin densities with respect to a retraction. In \autoref{stokes}, we prove a version of Stokes's theorem in this setting (\autoref{thm:stokes}). Here, the supermanifold structure on the boundary has to be chosen compatibly (see below). In \autoref{coordchange}, we prove a version of our change of variables formula in terms of coordinates (\autoref{thm:5.4}). Here, the `boundary' nature of the `boundary terms' is not yet evident. This is finally accomplished in \autoref{boundary_smfd}, where the language and technique of supermanifolds with corners and boundary supermanifolds is introduced; here, the point of view of retractions proves particularly fruitful. By applying this machinery, we prove our main result (\autoref{thm:trafo}) and illustrate its use in some examples. Finally, we deduce a generalised Stokes's theorem (\autoref{cor:stokes2}) where the supermanifold structure on the boundary is arbitrary.

\medskip
\emph{Acknowledgements}. The present paper is based on the diploma thesis \cite{Pal10} of W.P. under the guidance of J.H. 

\section{The Berezin integral in the non-compact case}\label{integraldef}

We use the standard definition of supermanifolds in terms of ringed spaces. For basic facts on these, we refer the reader to \cite{Le80,CdG94}. Let us fix our notation. Given an object in the graded category, we will denote the underlying ungraded object by a subscript $0$. We denote supermanifolds as $M=(M_0,\OO_M)$, $N=(N_0,\OO_N)$, \etc Unless the contrary is stated explicitly, we will assume $M$, $N$ to be of dimension $(p,q)$. Manifolds will always be Hausdorff and second countable. By writing $U\subseteq M$ we will mean that $U$ is the ringed subspace $M|_{U_0}\defi(U_0,\OO_M|_{U_0})$ of $M$ given  by the open subset $U_0\subseteq M_0$. Thus, unions and finite intersections of open subspaces are defined. Further, the set of superfunctions $\OO_M(U_0)$ on $U$ is abbreviated by $\OO(U)$. Morphisms of supermanifolds $M\to N$ are denoted $\varphi=(\varphi_0,\varphi^*)$, with underlying smooth map $\varphi_0\colon M_0\to N_0$ and the sheaf morphism  $\varphi^*\colon\OO_N\to{\varphi_0}_*\OO_M$. For a given supermanifold $M$ we denote the canonical embedding by $j_M\colon M_0\to M$. Given $f\in\OO(U)$, we write $f_0$ for $j_M^*(f)$. 

Now we introduce a certain type of morphisms which will be central for the following developments.

\begin{definition}
	A morphism $\gamma\colon M\to M_0$ is called a \emph{retraction} if it is a right inverse of the canonical embedding $j_M$, \ie
	\begin{align*}
		\gamma\circ j_M=\id_{M_0}.
	\end{align*}
\end{definition}

\begin{remarks}
		In the literature, the subalgebra $\text{Im}\gamma^*\subseteq\OO(M)$ is called a \emph{function factor}. 

		It is a known fact that retractions always exist on (real) supermanifolds \cite[Lemma~3.2]{RS83}. However, they are in general not unique. For superdomains there exists a canonical choice of retraction. Using exponential charts, one may also give canonical retractions in the case of Lie supergroups; this can also be extended to the case of homogeneous supermanifolds. 
\end{remarks}

We will repeatedly use the following standard fact \cite[Theorem~2.1.7]{Le80}.

\begin{proposition}\label{prop:3.3}
	Let $M$, $N$ be supermanifolds, $y=(v_1,\dotsc,v_p,\eta_1,\dotsc,\eta_q)$ a coordinate system on $N$, and $x=(u_1,\dotsc,u_p,\xi_1,\dotsc,\xi_q)$ a family of superfunctions on $M$ where the $u_i$ are even and the $\xi_j$ are odd. Then there exists a unique morphism $\varphi\colon M\to N$ such that $\varphi^*(y)=x$, if and only if the function $(u_{1,0},\dotsc,u_{p,0})$ takes its values in $v_0(N_0)=\big\{(v_{1,0}(o),\dotsc,v_{p,0}(o))\,\big|\, o\in N_0\big\}$. 
\end{proposition}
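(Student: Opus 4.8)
The plan is to use the coordinate system to identify $N$ with a superdomain and then to build $\varphi$ by substituting the given data into the coordinates. Since $y$ is a coordinate system, the underlying map $v_0=(v_{1,0},\dotsc,v_{p,0})\colon N_0\to\RR^p$ is a diffeomorphism onto the open set $V\defi v_0(N_0)$, and $y$ identifies $N$ with the superdomain over $V$; concretely, every superfunction on an open subset of $N$ has a unique finite expansion $f=\sum_{I\subseteq\{1,\dotsc,q\}}f_I\,\eta^I$ whose coefficients $f_I$ are smooth functions of the even coordinates alone.

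First I would settle \emph{necessity} together with the underlying map. Applying the reduction $j_M^*$ to the relation $\varphi^*(v_i)=u_i$ yields $v_{i,0}\circ\varphi_0=u_{i,0}$, so that $u_0=(u_{1,0},\dotsc,u_{p,0})=v_0\circ\varphi_0$ necessarily takes values in $V$; this is exactly the stated condition. Conversely, assuming the condition, I would \emph{define} the underlying map by $\varphi_0\defi v_0^{-1}\circ u_0\colon M_0\to N_0$, which is smooth precisely because $u_0$ lands in the open set $V$ on which $v_0^{-1}$ is defined.

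For the morphism of sheaves I would prescribe $\varphi^*$ by substitution. On generators I set $\varphi^*(v_i)\defi u_i$ and $\varphi^*(\eta_j)\defi\xi_j$, and on a general superfunction I put
\begin{align*}
	\varphi^*(f)\defi\sum_I f_I(u_1,\dotsc,u_p)\,\xi^I,\qquad f_I(u_1,\dotsc,u_p)\defi\sum_\alpha\frac{1}{\alpha!}\,\big((\partial^\alpha f_I)\circ u_0\big)\,\hat u^\alpha,
\end{align*}
where, in local coordinates on $M$, $\hat u_i$ denotes the nilpotent \emph{soul} of $u_i$ and $(\partial^\alpha f_I)\circ u_0$ is the corresponding lift of the body. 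Because the odd part of $\OO_M$ is nilpotent, each $\hat u_i$ is nilpotent and the $\alpha$-sum terminates, so $\varphi^*(f)$ is a genuine element of $\OO(M)$.

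The main obstacle is to verify that this substitution is well-defined and has the required formal properties. I expect the technical heart to be showing that the soul expansion is independent of the auxiliary coordinates chosen on $M$ (equivalently, of the soul splitting) and that $f\mapsto\varphi^*(f)$ is an even homomorphism of sheaves of algebras over $\varphi_0$; both reduce to Taylor's theorem with remainder and the chain rule, all expansions being finite by nilpotency. Uniqueness is then the easier half: any morphism with $\varphi^*(y)=x$ induces $\varphi_0=v_0^{-1}\circ u_0$ on reductions and, since a morphism respects the smooth structure, is forced on the even generators through the same Hadamard/Taylor expansion and on the odd generators by $\varphi^*(\eta_j)=\xi_j$. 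Once the substitution is in place, $\varphi^*(y)=x$ holds by construction, completing the proof.
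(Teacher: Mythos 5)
The paper gives no proof of \autoref{prop:3.3}, citing it as \cite[Theorem~2.1.7]{Le80}, and your argument is precisely that standard one: necessity by applying $j_M^*$, then construction of $\varphi^*$ by substitution \via the finite Taylor expansion in the nilpotent souls of the $u_i$, with well-definedness and uniqueness resting on the Hadamard/Taylor argument. This is correct and consistent with how the paper later uses the proposition (the explicit formula \eqref{eq:phiretchange} is exactly your substitution operator), so no further comment is needed.
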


\begin{definition}
	If $\gamma$ is a retraction and $u_0=(u_{1,0},\dotsc,u_{p,0})$ is a classical coordinate system, then $\gamma^*(u_0)$ is the even part of a coordinate system. 
	
	Conversely, if $s=(u,\xi)$ is a coordinate system, then there is a unique retraction $\gamma$ such that $\gamma^*(u_0)=u$, by the above proposition. We call this the \emph{retraction associated with $u$} (or $x$). 
	
	Let $x=(u,\xi)$ be a coordinate system and $\gamma$ be the retraction associated with $u$. Any superfunction $f$ possesses a unique decomposition 
\begin{align}\label{eq:1}
	f=\sum_{\hidewidth\nu\in\multi(0,q)\hidewidth}\gamma^*(f_\nu)\xi^\nu, \qquad f_\nu\in C^\infty,
\end{align}
where $\xi^\nu=\xi_1^{\nu_1}\dotsm\xi_q^{\nu_q}$.  Observe that in the literature, one commonly writes this expansion in terms of functions $g_\nu(u)$, where $g_\nu$ are functions on the range of the chart associated with $u_0$. We note further that $j_M^*(f)=f_\lr(0,\dotsc,0)$, which explains the abbreviation $f_0$ for $j_M^*(f)$.

Using decomposition \eqref{eq:1} we define derivations along the coordinates, 
\begin{alignat*}{4}
	\der f/{x_i}	&\defi\der f/{u_i}		&&\defi\ \sum_\nu\ \gamma^*\lr(\der f_\nu/{u_{i,0}})\xi^\nu,															&& i=1,\ldots,p,\\
	\der f/{x_{p+j}}&\defi\der f/{\xi_j}	&&\defi\ \sum_{\hidewidth\nu\,,\,\nu_j=1\hidewidth}\ \gamma^*(f_\nu)\xi^{\nu-e_j}(-1)^{\nu_1+\dotsc+\nu_{j-1}},\qquad	&& j=1,\ldots,q.
\end{alignat*}

We abbreviate $\der/{x_i}\defi\derf/{x_i}$ for $i=1,\ldots,p+q$ and $\diff x^i\defi\diff {x_{p+q}}^{ i_{p+q}}\circ\dotsb\circ\diff x_1^{i_1}$
for $i\in\multi(p,q)$. Corresponding abbreviations for $u$ and $\xi$ are similarly defined.

\end{definition}

\begin{definition}
	Let $M$ be a supermanifold and $(U,x)$ a local coordinate system. A \emph{Berezin form} $\omega$ on $U$ is an object of the form
	\begin{align*}
		\omega=fDx=(-1)^{|f|b(p,q)}Dx\, f,
	\end{align*}
	where $f$ is a superfunction on $U$. We make here no choice for the parity $|Dx|\defi b(p,q)$; a common one is $b(p,q)=p+q$. If $y=(v,\eta)$ is another coordinate system on $U$, then one requires 
	\begin{align*}
		\omega=fDx=f\D x/y Dy.
	\end{align*}
	
	Here, the Berezinian of the coordinate change is given by
	\begin{align*}
		\D x/y\defi	\Ber\lr(\der x/y)
				=\Ber\begin{pmatrix}
						\der u/v	&\der\xi/v\\
						\der u/\eta &\der\xi/\eta
					\end{pmatrix},
	\end{align*}
	where
	\begin{align*}
		\Ber\begin{pmatrix}
			R & S\\
			T & V
		\end{pmatrix}
		&\defi\det(R-SV^{-1}T)\det V^{-1}.
	\intertext{\indent The correspondence $U_0\mapsto\Ber U$ extends to an $\mathcal O_M$-module sheaf $\Ber_M$ on $M_0$, as is well-known \cite{Man97,Ch94,AH10}. The $\OO(M)$-module of Berezin forms on $M$ is denoted by $\Ber M$.
	\newline\indent One defines \emph{Berezin densities} similarly, replacing the character $\Ber$ by}
		|\Ber|\begin{pmatrix}R&S\\ T&V\end{pmatrix}
		&\defi\sgn j_M^*(\det R)\cdot\Ber\begin{pmatrix}R&S\\T&V\end{pmatrix}.
	\end{align*}
	
	Thus, Berezin densities have the local form $f|Dx|=(-1)^{|f||Dx|}|Dx|\, f,$ and obey the transformation law
	\begin{align*}
		f|Dx|=f\Da x/y|Dy|,
	\end{align*}
	where $\Da x/y=\pm \D x/y$ according to the relative orientation of $u_0$ and $v_0$. The corresponding $\OO_M$-module sheaf is denoted by $|\Ber|_M$. We denote its global sections by $|\Ber|M$.
\end{definition}

In the literature, Berezin forms are more common than densities. In keep with this convention, we will use forms in \autoref{stokes}. However, in general, it will be more convenient to work with densities; the extension to forms will always be straightforward. In particular, this applies to the formulation and proof of our main result, in \autoref{boundary_smfd}.

We recall the definition of the Berezin integral. 

\begin{definition}\label{def:3.3}
	Let $U$ be a coordinate neighbourhood with a coordinate system $x=(u,\xi)$, and $\omega=f |Dx|\in|\Ber|U$. We define 
	\begin{align}
		\int_\lr(U,x)\omega\defi(-1)^{s(p,q)}\int_{U_0}f_\lr(1,\dotsc,1)|du_0|,
	\end{align}
	whenever the right hand side exists. Here, $|du_0|$ is the pullback of the standard Lebesgue density on $\RR^p$ under $u_0$, and $f_\lr(1,\dotsc,1)$ is the top degree coefficient in \Cref{eq:1}, where $\gamma$ is associated with $u$. 

	There is no uniform choice for the number $s(p,q)\in\ZZ_2$ in the literature. Customary are $s(p,q)=pq+\fr {q(q-1)}/2$ or $s(p,q)=\fr {q(q-1)}/2$. The definition of the integral of a Berezin form is similar. 
\end{definition}

We have the following classical theorem \cite[Theorem~2.4.5]{Le80}.
\begin{theorem}\label{thm:3.4}
	Let $U$ be a coordinate neighbourhood and $\omega$ be a Berezin density which is compactly supported on $U$. Then
	\begin{align*}
		\int_\lr(U,x)\omega=\int_\lr(U,y)\omega,
	\end{align*}
	if $x=(u,\xi)$ and $y=(v,\eta)$ are coordinate systems on $U$. The same is true for Berezin forms if $u_0$ and $v_0$ are equally oriented.
\end{theorem}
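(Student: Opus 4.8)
The plan is first to reduce the statement to a single scalar identity, and then to a short chain of elementary coordinate changes. Writing $\omega=f|Dx|=f\,\Da x/y\,|Dy|$ and unwinding \autoref{def:3.3}, the claim is equivalent to
\begin{equation*}
	\int_{U_0}f_\lr(1,\dotsc,1)\,|du_0|=\int_{U_0}\bigl(f\,\Da x/y\bigr)_\lr(1,\dotsc,1)\,|dv_0|,
\end{equation*}
where on the left the top coefficient is read off the $\xi$-expansion \eqref{eq:1} for the retraction associated with $u$, on the right off the $\eta$-expansion for the retraction associated with $v$; the universal constant $(-1)^{s(p,q)}$ is the same on both sides and cancels. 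Inverting the odd part of $y=y(x)$ in the odd sector, \autoref{prop:3.3} lets me factor the change as $(u,\xi)\to(u,\eta)\to(v,\eta)$, the first step altering only the odd coordinates and the second only the even ones. Because the Berezinian is multiplicative along such a chain and the asserted identity is transitive, it suffices to establish it for these two elementary types in turn.

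For a purely odd change $v=u$, $\eta=\eta(u,\xi)$ the body map is the identity, so $|dv_0|=|du_0|$ and $\sgn j_M^*\lr(\det\der u/v)=+1$; the block formula, whose lower-left corner $\der u/\eta$ vanishes, collapses the Berezinian to $\D x/y=\det\lr(\der\xi/\eta)^{-1}$. I would then observe that re-expressing the monomial $\xi_1\dotsm\xi_q$ through $\eta_1\dotsm\eta_q$ introduces precisely the factor $\det\lr(\der\xi/\eta)$ into the top coefficient, which the Berezinian cancels; more generally the lower-order corrections conspire so that $\bigl(f\,\D x/y\bigr)_\lr(1,\dotsc,1)=f_\lr(1,\dotsc,1)$ holds \emph{pointwise}. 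This case is thus an exact algebraic identity---the factor $\det V^{-1}$ in the definition of $\Ber$ is there precisely to force it---and no integration is needed.

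The purely even change $\eta=\xi$, $v=v(u,\xi)$ is the crux. Now $\der\xi/v=0$ and $\der\xi/\eta=I$, so the Berezinian becomes the super-Jacobian $\D x/y=\det\lr(\der u/v)$, whose body is the classical Jacobian of $v_0=v_0(u_0)$ but which also carries nilpotent soul corrections, as does $v$ itself. The plan is to expand $v$ and $\det\lr(\der u/v)$ in the souls and compare top coefficients. The soul-free part will reproduce the classical change-of-variables formula, the sign $\sgn j_M^*\lr(\det\der u/v)$ turning the signed Jacobian into $|\det\der{v_0}/{u_0}|$ and so matching $|du_0|$ against $|dv_0|$. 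The soul-dependent contributions to the top coefficient are the main obstacle: they do \emph{not} cancel pointwise. The key claim---the heart of the theorem---is that each of them is a total derivative $\sum_i\der{}/{u_{i,0}}(\cdots)$, and hence integrates to zero over $U_0$ because $\omega$ is compactly supported and $U_0$ has no boundary. To make this divergence structure transparent I would, after passing to a partition of unity on $\supp\omega$, break the even change into a finite product of elementary shears, each altering a single even coordinate by a single soul monomial, for which the integration by parts is immediate.

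Assembling the two cases along the factorization gives the statement for densities. The one genuinely analytic point is the vanishing of the soul corrections in the even case, and it is exactly here that compact support is indispensable; its failure in the non-compact setting is precisely what produces the boundary terms studied in the rest of this paper. For Berezin forms the argument runs identically, except that $\Ber$ carries no $\sgn$ factor, so the even case yields the Jacobian determinant \emph{with its sign}; matching this against the unsigned Lebesgue densities of \autoref{def:3.3} forces $\det\der{v_0}/{u_0}>0$, which is exactly the hypothesis that $u_0$ and $v_0$ be equally oriented.
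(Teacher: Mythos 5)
The paper does not prove this statement at all---it is quoted as a classical result with the citation \cite[Theorem~2.4.5]{Le80}---so there is no in-paper argument to compare against. Your outline is precisely the standard proof from that literature (factor the coordinate change through $(u,\eta)$ into a purely odd part, handled by a pointwise algebraic identity with the Berezinian, and a purely even part, whose soul corrections are total derivatives killed by compact support), and it is sound: the two computational kernels are only asserted rather than carried out, but they are correctly identified, as is the exact point where compact support and, for forms, the orientation hypothesis enter.
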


As is well-known, the assumption of compact supports cannot be removed in the above theorem; the following classical counterexample is referred to as Rudakov's example in the literature.
\begin{example}\label{ex:3.4}
	Let $\Omega\subseteq\RR^{1,2}$ be the superdomain with $\Omega_0=\ ]0,1[$. Let $x=(u,\xi_1,\xi_2)$ be a coordinate system on $\Omega$ with $u_0=\id_{\Omega_0}$. Let $y=(v,\eta_1,\eta_2)$ be the coordinate system given by $v=u+\xi_1\xi_2$ and $\eta_i=\xi_i,\ i=1,2$. Set $\omega\defi vDy$. 
	
	We have
	\begin{align*}
		\Da y/x
			=\Ber\left(\begin{smallmatrix}
					1		& 0	& 0\\
					\xi_2	& 1	& 0\\
					-\xi_1				& 0	& 1
					\end{smallmatrix}\right)
			=1,
	\end{align*}
	hence $\omega=(u+\xi_1\xi_2)Dx$. This leads to
	\begin{align*}
		\int_\lr(\Omega,x)\omega=\pm1\neq0=\int_\lr(\Omega,y)\omega.
	\end{align*}
\end{example}

However, \autoref{thm:3.4} allows us to make the following observation.

\begin{lemma}\label{lem:3.5}
	Let $\gamma$ be a retraction on $M$ and $\omega$ a Berezin density. Let $x=(u,\xi)$ and $y=(v,\eta)$ be coordinate systems on a coordinate neighbourhood $U$ with the same associated retraction $\gamma$. Let $\omega=f|Dx|=g|Dy|$ on $U$. Then
	\begin{align*}
		f_\lr(1,\dotsc,1)|du_0|=g_\lr(1,\dotsc,1)|dv_0|,
	\end{align*}
	where $f_\lr(1,\dotsc,1)$ and $g_\lr(1,\dotsc,1)$ are the coefficients from \Cref{eq:1}, applied to $f$ and $g$, respectively. 
\end{lemma}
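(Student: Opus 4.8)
The plan is to deduce the pointwise (density) identity from the mere equality of integrals furnished by \autoref{thm:3.4}, using a localisation argument. The point is that the single retraction $\gamma$ simultaneously governs the top-degree coefficient in \emph{both} coordinate systems, so that multiplying $\omega$ by an arbitrary classical test function rescales that coefficient in the same way on each side. First I would isolate this coefficientwise behaviour of the function factor.

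Since $x=\lr(u,\xi)$ and $y=\lr(v,\eta)$ have the \emph{same} associated retraction $\gamma$, the expansion \Cref{eq:1} is taken with respect to $\gamma$ in either system. Hence any $\phi\defi\gamma^*(\phi_0)$, with $\phi_0\in C^\infty(U_0)$, lies in the function factor $\operatorname{Im}\gamma^*$ and has trivial expansion — only the term $\nu=\lr(0,\dotsc,0)$ occurs — in \emph{both} systems. As $\gamma^*$ is an algebra morphism, multiplication by the even element $\phi$ acts coefficientwise: writing $f=\sum_\nu\gamma^*(f_\nu)\xi^\nu$ and $g=\sum_\nu\gamma^*(g_\nu)\eta^\nu$ gives $\phi f=\sum_\nu\gamma^*(\phi_0 f_\nu)\xi^\nu$ and $\phi g=\sum_\nu\gamma^*(\phi_0 g_\nu)\eta^\nu$, whence
\begin{align*}
	(\phi f)_\lr(1,\dotsc,1)=\phi_0\,f_\lr(1,\dotsc,1),\qquad(\phi g)_\lr(1,\dotsc,1)=\phi_0\,g_\lr(1,\dotsc,1).
\end{align*}
This is exactly the step that uses the hypothesis on $\gamma$.

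Now I would fix an arbitrary $\phi_0\in C_c^\infty(U_0)$ and put $\phi\defi\gamma^*(\phi_0)$. The density $\phi\omega=(\phi f)|Dx|=(\phi g)|Dy|$ has support contained in $\supp\phi_0$, hence compact; so, irrespective of whether $\omega$ itself is integrable, \autoref{thm:3.4} applies and yields $\int_\lr(U,x)\phi\omega=\int_\lr(U,y)\phi\omega$. Unwinding \autoref{def:3.3} (the global sign $(-1)^{s(p,q)}$ being common to both sides) and inserting the two identities above, this reads
\begin{align*}
	\int_{U_0}\phi_0\,f_\lr(1,\dotsc,1)\,|du_0|=\int_{U_0}\phi_0\,g_\lr(1,\dotsc,1)\,|dv_0|.
\end{align*}
As $\phi_0$ ranges over $C_c^\infty(U_0)$ and the coefficients are smooth, the fundamental lemma of the calculus of variations forces $f_\lr(1,\dotsc,1)|du_0|=g_\lr(1,\dotsc,1)|dv_0|$, which is the assertion.

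I expect the only genuine subtlety to lie in the coefficientwise step: it is essential that both coordinate systems induce the \emph{same} retraction, for otherwise $\phi=\gamma^*(\phi_0)$ would acquire a nontrivial $\eta$-expansion and the clean scaling of the top coefficients would break down — this is precisely the mechanism behind \autoref{ex:3.4}. A direct alternative would compute $\Da x/y$ and note that the shared retraction forces $\der u/\eta=0$, so that $\Ber\lr(\der x/y)=\det\lr(\der u/v)\cdot\det\lr(\der\xi/\eta)^{-1}$; but tracking how this factor transforms the top coefficient essentially reproves \autoref{thm:3.4}, which is why I would rather invoke the latter directly.
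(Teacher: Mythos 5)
Your proposal is correct and is essentially the paper's own argument: the paper likewise picks a bump function $h\in C_c^\infty(U_0)$, observes that $\gamma^*(h)\omega$ is compactly supported and that multiplication by $\gamma^*(h)$ scales the top coefficient by $h$ in both coordinate systems (precisely because both induce the same retraction $\gamma$), applies \autoref{thm:3.4}, and concludes from the arbitrariness of $h$. Your remarks on where the shared-retraction hypothesis enters match the mechanism the paper relies on.
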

\begin{proof}
	Choose a bump function $h\in C_c^\infty(U_0)$. Then by \autoref{thm:3.4},
	\begin{align*}
		\int_{U_0}hf_\lr(1,\dotsc,1)|du_0|
			&=\pm\int_\lr(U,x)\gamma^*(hf_\lr(1,\dotsc,1))\xi_1\dotsm\xi_q\,|Dx|=\pm\int_\lr(U,x)\gamma^*(h)\omega\\
			&=\pm\int_\lr(U,y)\gamma^*(h)\omega=\int_{U_0}hg_\lr(1,\dotsc,1)|dv_0|.
	\end{align*}
	Since $h$ was arbitrary, this proves our claim. 
\end{proof}

Again, one can get the same result for Berezin forms. Thanks to this lemma, the following definition makes sense.

\begin{definition}\label{def:3.6}
	Let $\gamma$ be a retraction on the supermanifold $M$. We define the map $\gamma_!\colon|\Ber|M\to|\Omega^p|M_0$ locally \via
	\begin{align*}
		(\gamma_!\omega)|_U&\defi (-1)^{s(p,q)} f_\lr(1,\dotsc,1)|du_0|,
	\intertext{where $\omega$, $U$, $x=(u,\xi)$ and $f_\lr(1,\dotsc,1)$ are as in \autoref{lem:3.5}.
	\newline\indent Similar we define $\gamma_!\colon\Ber M\to\Omega^p M_0$ \via}
		\gamma_!(fDx)&\defi (-1)^{s(p,q)} f_\lr(1,\dotsc,1)du_0.
	\end{align*}

\end{definition}

In fact, $\varphi_!$ can be defined for any surjective submersion $\varphi$ \cite{AH10}. Note that if one chooses $b(p,q)=p+q$ or $b(p,q)=q$ and fixes parity according to the sign rule, the morphism $\gamma_!$ becomes even.

One can easily check the following properties:
\begin{align}\label{eq:3}
	\gamma_!\bg(\gamma^*(g)\omega)=g\gamma_!(\omega),\quad\supp\gamma_!(\omega)\subseteq\supp\omega
\end{align}
for any $g\in C^\infty(M_0)$ and $\omega\in|\Ber| M$ (resp.~$\omega\in\Ber M$).

\begin{definition}\label{def:3.7}
Let $\gamma$ be a retraction on $M$ and $\omega$ be a Berezin density on $M$. We call $\omega$ integrable with respect to $\gamma$ if $\gamma_!(\omega)$ is integrable on $M_0$ as density. In this case, we define
	\begin{align*}
		\int_\lr(M,\gamma)\omega\defi\int_{M_0}\gamma_!\omega.
	\end{align*}
	If $M_0$ is oriented, this definition can be extended to the case of Berezin forms.
\end{definition}

On coordinate neighbourhoods $U$ this definition is compatible with the local definition, given in \autoref{def:3.3}:
\begin{align*}
	\int_\lr(U,x)\omega=\int_\lr(U,\gamma)\omega,
\end{align*}
where $\gamma$ is the retraction associated with $x$.
In particular, the integral on the right hand side is the same for coordinate systems whose even parts induce the same retraction. Moreover, \autoref{thm:3.4} generalises as follows.
\begin{corollary}\label{retrcpt}
	Let $\gamma,\gamma'$ be retractions on $M$ and $\omega$ be compactly supported on $M$. Then
	\begin{align*}
		\int_\lr(M,\gamma)\omega=\int_\lr(M,\gamma')\omega.
	\end{align*}
\end{corollary}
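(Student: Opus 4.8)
The plan is to deduce the global statement from the local \autoref{thm:3.4} by a partition of unity argument; the essential point will be to localise $\omega$ by \emph{one single} decomposition that serves both retractions at once, thereby avoiding any comparison of integrands twisted by two different function factors.

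First I would isolate the local core of the statement: on a coordinate neighbourhood $U$, one has $\int_\lr(U,\gamma)\sigma=\int_\lr(U,\gamma')\sigma$ for every Berezin density $\sigma$ compactly supported on $U$ and any two retractions $\gamma,\gamma'$ on $U$. To prove this, fix a classical coordinate system $u_0$ on $U_0$ and put $u\defi\gamma^*(u_0)$ and $v\defi\gamma'^*(u_0)$. By the construction of the associated retraction (following \autoref{prop:3.3}), $u$ and $v$ are the even parts of coordinate systems; completing them by odd coordinates gives coordinate systems $x=(u,\xi)$ and $y=(v,\eta)$ on $U$ whose associated retractions are, by definition of the associated retraction, exactly $\gamma$ and $\gamma'$. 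Since $j_M^*(u)=u_0=j_M^*(v)$, the even parts of $x$ and $y$ coincide, so \autoref{thm:3.4} gives $\int_\lr(U,x)\sigma=\int_\lr(U,y)\sigma$; the compatibility of the local and the retraction integral on coordinate neighbourhoods (noted after \autoref{def:3.7}) then turns this into $\int_\lr(U,\gamma)\sigma=\int_\lr(U,\gamma')\sigma$.

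For the global step I would first observe that a retraction covers the identity, $\gamma_0=\id_{M_0}$, since $\gamma\circ j_M=\id_{M_0}$ and $j_M$ has underlying map the identity; hence $\gamma^*$ does not enlarge supports. Cover $M_0$ by coordinate neighbourhoods $U_\alpha$, pick a smooth partition of unity $(\psi_\alpha)$ on $M_0$ subordinate to $(U_{\alpha,0})$, and set $\chi_\alpha\defi\gamma^*(\psi_\alpha)\in\OO(M)$. Then $\sum_\alpha\chi_\alpha=\gamma^*(1)=1$ and $\supp\chi_\alpha\subseteq\supp\psi_\alpha\subseteq U_{\alpha,0}$, so each $\chi_\alpha\omega$ is compactly supported on $U_\alpha$, and only finitely many are nonzero as $\supp\omega$ is compact. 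Writing $\omega=\sum_\alpha\chi_\alpha\omega$ and using linearity of the integral together with $\supp\gamma_!(\chi_\alpha\omega)\subseteq\supp(\chi_\alpha\omega)$ from \eqref{eq:3}, I obtain $\int_\lr(M,\gamma)\omega=\sum_\alpha\int_\lr(U_\alpha,\gamma)\chi_\alpha\omega$ and, crucially with the \emph{same} functions $\chi_\alpha$, also $\int_\lr(M,\gamma')\omega=\sum_\alpha\int_\lr(U_\alpha,\gamma')\chi_\alpha\omega$. Applying the local core to each summand $\chi_\alpha\omega$ now equates the two sums.

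The step I expect to be the real obstacle is exactly this insistence on a single decomposition: the naive route of localising the two integrals by their adapted partitions $\gamma^*(\psi_\alpha)$ and $\gamma'^*(\psi_\alpha)$ breaks down, because $\gamma^*(\psi_\alpha)\omega$ and $\gamma'^*(\psi_\alpha)\omega$ differ by nilpotent terms and have no reason to share the same $\gamma'$-integral summand by summand. Everything else is bookkeeping; for Berezin forms with $M_0$ oriented the same argument applies, the identity $j_M^*(u)=j_M^*(v)$ ensuring that the even parts in \autoref{thm:3.4} carry matching orientations.
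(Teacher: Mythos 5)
Your proof is correct and is essentially the argument the paper leaves implicit for \autoref{retrcpt}: reduce to \autoref{thm:3.4} on coordinate neighbourhoods via the associated coordinate systems $x=(u,\xi)$, $y=(v,\eta)$ with $u_0=v_0$, and globalise with a single partition of unity in $\OO(M)$ (compare the proof of the corollary immediately following, where the authors use exactly such a decomposition). Your observation that one decomposition $\omega=\sum_\alpha\gamma^*(\psi_\alpha)\omega$ serves both retractions because $\gamma_0=\id_{M_0}$ preserves supports is precisely the point that makes the localisation legitimate.
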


In this case, we will write $\int_M\omega$ for the integral.

\begin{corollary}
	Let $\omega\in|\Ber| M$ (resp.~$\omega\in\Ber M$) and $\gamma,\gamma'$ be retractions. The density (resp.~volume form) $\gamma_!(\omega)-\gamma'_!(\omega)$ is exact. 
\end{corollary}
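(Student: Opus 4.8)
The plan is to recast exactness as a statement in top-degree de~Rham cohomology, and to supply the single non-formal input from \autoref{retrcpt}. Set $\theta\defi\gamma_!(\omega)-\gamma'_!(\omega)$; by \autoref{def:3.6} this is a smooth top-degree form (resp.~density) on the $p$-dimensional manifold $M_0$. Since $M_0$ is second countable it is a disjoint union $M_0=\bigsqcup_c M_0^{(c)}$ of connected components, and because these are pairwise disjoint open sets, a primitive of $\theta$ can be assembled from primitives on the individual $M_0^{(c)}$. Hence it suffices to establish exactness on each connected component, and I would treat the compact and the non-compact components separately. (Here I use that any retraction has underlying map $\id_{M_0}$, so $\gamma,\gamma'$ restrict to retractions of each open subspace $M|_{M_0^{(c)}}$.)

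On a non-compact connected component there is nothing to check: the top de~Rham cohomology of a non-compact connected $p$-manifold vanishes, so every smooth top-degree form---in particular $\theta|_{M_0^{(c)}}$---is automatically exact, irrespective of its support. In the density case one replaces the de~Rham complex by its twist by the orientation sheaf, whose top cohomology vanishes in the same fashion.

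On a compact connected component $M_0^{(c)}$ the top cohomology is one-dimensional (for an orientable component in the forms case, and always in the density case), the isomorphism with $\RR$ being realised by integration; thus $\theta|_{M_0^{(c)}}$ is exact precisely when $\int_{M_0^{(c)}}\theta=0$, while in the remaining non-orientable forms case the top de~Rham cohomology vanishes and exactness is again automatic. To verify the vanishing of the integral I would restrict $\omega$ to $M|_{M_0^{(c)}}$: its base being compact, $\omega$ is there compactly supported and its fibre integrals are integrable, so \autoref{retrcpt} applies and yields $\int_{M_0^{(c)}}\gamma_!(\omega)=\int_{M_0^{(c)}}\gamma'_!(\omega)$, that is $\int_{M_0^{(c)}}\theta=0$. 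For densities the relevant integral $\int_{M_0^{(c)}}$ needs no orientation, so the identical computation applies.

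The only genuinely non-formal ingredient is the vanishing of the top (twisted) de~Rham cohomology on non-compact connected manifolds; I expect the main care to go into the density and non-orientable bookkeeping, where ordinary de~Rham theory must be replaced by the orientation-twisted complex so that the orientation-free integral $\int_{M_0}$ continues to detect exactly the compact components and to vanish on the non-compact ones. Granting these standard facts, the forms case is merely the sub-case in which the relevant components are oriented, and gluing the component-wise primitives over the disjoint components $M_0^{(c)}$ completes the proof.
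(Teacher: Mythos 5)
Your argument is correct, but it follows a genuinely different route from the paper. The paper first treats compactly supported $\omega$ (where \autoref{retrcpt} gives $\int_{M_0}\bigl(\gamma_!(\omega)-\gamma'_!(\omega)\bigr)=0$, whence exactness by the top-degree compactly supported Poincar\'e lemma, \ie $H^p_c\cong\RR$ on connected pieces), and then reduces the general case to this one by a partition of unity $(\phi_\alpha)$, summing primitives $\eta_\alpha$ of $\gamma_!(\phi_\alpha\omega)-\gamma'_!(\phi_\alpha\omega)$ after arranging their supports to form a locally finite family. You instead decompose $M_0$ into connected components, dispose of the non-compact ones by the vanishing of the top (orientation-twisted) de~Rham cohomology of a connected non-compact $p$-manifold, and invoke \autoref{retrcpt} only on the compact components, where $\omega$ is automatically compactly supported and $H^p\cong\RR$ is detected by integration. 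Each approach buys something: yours avoids entirely the partition-of-unity bookkeeping --- in particular the step, which the paper merely asserts, that the local primitives $\eta_\alpha$ can be chosen with locally finite supports so that $\sum_\alpha\eta_\alpha$ converges --- at the price of importing the non-compact vanishing theorem (a standard but not completely formal fact, usually proved by Poincar\'e duality or an exhaustion argument); the paper's route needs only the compactly supported statement in top degree and works uniformly without separating components. Both arguments rely on \autoref{retrcpt} as the sole non-topological input, and your observation that retractions have underlying map $\id_{M_0}$, so that everything restricts to the open subspaces over the components, is exactly what makes the componentwise reduction legitimate.
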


\begin{proof}
	If $\omega$ is compactly supported, then 
	\[
		\int_{M_0}\bigl(\gamma_!(\omega)-\gamma'_!(\omega)\bigr)=\int_{(M,\gamma)}\omega-\int_{(M,\gamma')}\omega=0,
	\]
	so $\gamma_!(\omega)-\gamma'_!(\omega)$ is exact.
	
	In the general case, let $\lr(\phi_\alpha)\subseteq\mathcal O(M)$ be a partition of unity with compact supports. By the above, $d\eta_\alpha=\gamma_!(\phi_\alpha\omega)-\gamma'_!(\phi_\alpha\omega)$ for some $\eta_\alpha$. One may assume the family of supports to be locally finite, so that $\eta=\sum_\alpha\eta_\alpha$ is well-defined, and one has $d\eta=\gamma_!(\omega)-\gamma'_!(\omega)$. 
\end{proof}

\begin{definition}
	Let $\varphi\colon M\to N$ be an isomorphism of supermanifolds.
	\begin{enumerate}[(i)]
		\item
			The \emph{pullback} Berezin density $\varphi^*\omega$ of a Berezin density $\omega$ on $N$ is defined by writing $\omega|_U= f |Dx|$ on a coordinate neighbourhood $(U,x)$ on $N$ and setting
			\begin{align*}
				\lr(\varphi^*\omega)|_{\varphi^{-1}(U)}\defi\varphi^*(f)|D\varphi^*(x)|.
			\end{align*}
			Here, we observe that $\varphi^*(x)=\big(\varphi^*(x_1),\dotsc,\varphi^*(x_{p+q})\big)$ is a coordinate system on $\varphi^{-1}(U)\defi M|_{\varphi_0^{-1}(U_0)}$.
			
			This is well-defined, since
			\begin{align*}
				\varphi^*\lr(\Da x/y)=\Da{\varphi^*(x)}/{\varphi^*(y)}.
			\end{align*}
			The \emph{pullback} of a Berezin form is defined analogously.
		\item
			The \emph{pullback} $\varphi^*\gamma$ of a retraction $\gamma$ on $N$ is defined by
			\begin{align*}
				\varphi^*\gamma\defi\varphi_0^{-1}\circ\gamma\circ\varphi\colon M_0\longrightarrow M.
			\end{align*}
	\end{enumerate}
\end{definition}

\begin{corollary}\label{cor:3.9}
	Let $\varphi\colon M\to N$ be an isomorphism of supermanifolds. Let $\gamma$ be a retraction on $N$ and let $\omega$ be a Berezin density or Berezin form on $N$ which is integrable with respect to $\gamma$. Then $\varphi^*\omega$ is integrable with respect to $\varphi^*\gamma$ and
	\begin{align*}
		\int_\lr(M,\varphi^*\gamma)\varphi^*\omega=\int_\lr(N,\gamma)\omega.
	\end{align*}
	In the case of a Berezin form, $\varphi_0$ has in addition to be orientation preserving.
\end{corollary}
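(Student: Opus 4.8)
The plan is to reduce the global statement to the local situation already handled by \autoref{lem:3.5} and the definitions, by exploiting the fact that both the pullback Berezin density and the pullback retraction are defined \emph{via} the formula $\varphi^*(x)$ for coordinate systems. First I would verify the claim that $\varphi^*(x)=\big(\varphi^*(x_1),\dotsc,\varphi^*(x_{p+q})\big)$ is indeed a coordinate system on $\varphi^{-1}(U)$; this follows because $\varphi$ is an isomorphism, so $\varphi^*$ is an isomorphism of structure sheaves, carrying a coordinate system to a coordinate system (one may invoke \autoref{prop:3.3}, or simply observe that $\varphi^{-1}$ realises the inverse). Consequently $\varphi^{-1}(U)=M|_{\varphi_0^{-1}(U_0)}$ is a coordinate neighbourhood on $M$ with coordinate system $\varphi^*(x)$, and integrability and integration can be computed chart-wise.

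The key computation is to show that the pushforward $\gamma_!$ commutes with pullback in the appropriate sense, namely
\begin{align}\label{eq:pullbackcommute}
	(\varphi^*\gamma)_!\lr(\varphi^*\omega)=\varphi_0^*\lr(\gamma_!\omega).
\end{align}
To see this locally, let $x=(u,\xi)$ be a coordinate system on $U\subseteq N$ whose even part $u$ induces the retraction $\gamma$, and write $\omega=f|Dx|$. Then $\varphi^*(x)=(\varphi^*(u),\varphi^*(\xi))$ is a coordinate system on $\varphi^{-1}(U)$ whose even part $\varphi^*(u)$ induces precisely the pullback retraction $\varphi^*\gamma=\varphi_0^{-1}\circ\gamma\circ\varphi$; this is the heart of the matter, and should be checked against the definition of the associated retraction (\autoref{prop:3.3} guarantees uniqueness, so it suffices to verify that $\lr(\varphi^*\gamma)^*(\varphi^*(u_0))=\varphi^*(u)$). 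Granting this, the coefficient $f_{(1,\dotsc,1)}$ in the expansion \Cref{eq:1} of $f$ pulls back to the top coefficient of $\varphi^*(f)$ in the expansion associated with $\varphi^*\gamma$, because $\varphi^*$ is an algebra morphism sending $\gamma^*(f_\nu)\xi^\nu$ to $(\varphi^*\gamma)^*(\varphi_0^*f_\nu)\varphi^*(\xi)^\nu$; in particular the top coefficient is $\varphi_0^*\lr(f_{(1,\dotsc,1)})$. Comparing with \autoref{def:3.6} yields \Cref{eq:pullbackcommute}.

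Having established \Cref{eq:pullbackcommute}, the conclusion is immediate: by \autoref{def:3.7},
\begin{align*}
	\int_\lr(M,\varphi^*\gamma)\varphi^*\omega
		=\int_{M_0}(\varphi^*\gamma)_!\lr(\varphi^*\omega)
		=\int_{M_0}\varphi_0^*\lr(\gamma_!\omega)
		=\int_{N_0}\gamma_!\omega
		=\int_\lr(N,\gamma)\omega,
\end{align*}
where the third equality is the classical change-of-variables formula for the diffeomorphism $\varphi_0\colon M_0\to N_0$ applied to densities. Integrability of $\varphi^*\omega$ with respect to $\varphi^*\gamma$ follows from integrability of $\omega$ because $\varphi_0$ is a diffeomorphism, so $\varphi_0^*\lr(\gamma_!\omega)$ is integrable precisely when $\gamma_!\omega$ is. In the Berezin-form case one works with signed densities, and the change-of-variables step for $\varphi_0$ then requires orientation preservation, which accounts for the additional hypothesis.

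The main obstacle I anticipate is the careful identification of the pullback retraction with the retraction associated with the pulled-back coordinates, \ie showing that $\varphi^*\gamma$ is exactly the retraction induced by $\varphi^*(u)$. This is a compatibility between two \emph{a priori} different constructions (the explicit conjugation $\varphi_0^{-1}\circ\gamma\circ\varphi$ versus the abstract ``associated retraction'' of a coordinate system), and getting the bookkeeping right — ensuring $\varphi^*$ intertwines the two expansions of type \Cref{eq:1} so that top-degree coefficients correspond — is where all the substance lies. The remaining steps (chart reduction, the classical transformation formula, the sign/orientation discussion) are routine once this identification is in hand.
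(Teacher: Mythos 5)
Your proposal is correct and follows essentially the same route as the paper: both reduce to the local identity $(\varphi^*\gamma)_!(\varphi^*\omega)=\varphi_0^*(\gamma_!\omega)$, verified by observing that $\varphi^*\gamma$ is the retraction associated with $\varphi^*(x)$ and that $\varphi^*$ intertwines the expansions \eqref{eq:1}, so the top coefficients correspond. (Minor notational slip: in your verification you should write $\lr(\varphi^*\gamma)^*\bg(\varphi_0^*(u_0))=\varphi^*(u)$, since $u_0$ is a classical function on $N_0$.)
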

\begin{proof}
	We only have to check that
	\begin{align}\label{eq:pullback}
		\lr(\varphi^*\gamma)_{!}\lr(\varphi^*\omega)=\varphi_0^*\lr(\gamma_!\omega),
	\end{align}
	because then
	\begin{align*}
		\int_\lr(M,\varphi^*\gamma)\varphi^*\omega
			=\int_{M_0}\lr(\varphi^*\gamma)_{!}\lr(\varphi^*\omega)
			=\int_{M_0}\varphi_0^*\lr(\gamma_!\omega)
			=\int_{N_0}\gamma_!\omega
			=\int_\lr(N,\gamma)\omega.
	\end{align*}
	
	It suffices to check \Cref{eq:pullback} locally. So, we write $\omega=f|Dx|$ and $f=\sum_\nu\gamma^*(f_\nu)\xi^\nu$ for a coordinate system $x=(u,\xi)$ with which $\gamma$ is associated. Note that $\varphi^*\gamma$ is the retraction associated with $\varphi^*(x)$. We decompose $\varphi^*\omega$ with respect to this coordinate system:

	\begin{align*}
		\varphi^*\omega
			=\sum_\nu\varphi^*\bg(\gamma^*(f_\nu)\xi^\nu)|D\varphi^*(x)|
			=\sum_\nu\lr(\varphi^*\gamma)^*\bg(\varphi_0^*(f_\nu))\varphi^*(\xi)^\nu|D\varphi^*(x)|.
	\end{align*}
	It follows that
	\begin{align*}
		\lr(\varphi^*\gamma)_{!}\lr(\varphi^*\omega)
			&=(-1)^{s(p,q)}\varphi_0^*(f_\lr(1,\dotsc,1))|d\varphi_0^*(u_0)|
			=\varphi_0^*\lr(\gamma_!\omega).\qedhere
	\end{align*}
\end{proof}

\section{Stokes's theorem}\label{stokes}

\begin{definition}
	Recall \cite{BL77,Man97} that the sheaf $\Sigma^k_M$ of \emph{integral forms} of order $k\leq p$ is defined to be
	\begin{align*}
		\Sigma_M^k\defi\Ber_M\otimes_{\OO_M}S^{p-k}(\XX_M\Pi),
	\end{align*}
	where $S^k(\XX_M\Pi)$ denotes the $k$-th supersymmetric power of the sheaf of parity changed super derivations. We will abbreviate $\Sigma_M^k(M_0)$ by $\Sigma^kM$. In the following we restrict to the case $k=p-1$.
	
	The \emph{Cartan derivative} on $p-1$ integral forms is given by
	\begin{align*}
		d\colon\Sigma^{p-1}M\longrightarrow\Sigma^pM=\Ber M,\ \omega\otimes X\Pi\longmapsto(-1)^{|\omega||X\Pi|}\Lie_X\omega.
	\end{align*}
	
	Here, $\Lie_X$ is the Lie derivative on $\Ber M$, locally given by
	\begin{align*}
		\Lie_{g\der /{x_i}}(fDx)=(-1)^{|g||x_i|}\der/{x_i}(gf)Dx.
	\end{align*}
	This does not depend on the chosen coordinate system \cite[Lemma~2.4.6]{Le80}.
	
	For conceptual reasons we made here a choice for the sign which differs from Ref.~\cite{Man97}. (The sign there is given by $(-1)^{|\omega||X\Pi|+|X|}$.)
\end{definition}

\begin{remark}\label{rk:4.2}
	In the classical case $M=M_0$ integral forms and differential forms can be identified. For $k=p-1$ this identification is given by
	\begin{align*}
		\Psi\colon\Sigma^{p-1}M_0\longrightarrow\Omega^{p-1}M_0,\ \omega\otimes X\Pi\longmapsto (-1)^{|\omega|}\iota_X\omega,
	\end{align*}
	where $\iota_X$ is the contraction by $X$. The definition of the Cartan derivative is compatible with this identification, as can be seen from 
	\begin{align}
		d\bg(\Psi(\omega\otimes X\Pi))=(-1)^{|\omega|}d(\iota_X\omega)=(-1)^{|\omega|}\Lie_X\omega.
	\end{align}
\end{remark}

\begin{definition}
	Recall that a morphism $\iota\colon N\to M$ is called an \emph{immersion} in case the following is true: For each point $o\in N_0$ and some (any) coordinate system $x=(x_1,\dotsc,x_{p+q})$ on a neighbourhood of $\iota_0(o)$, there exists a coordinate neighbourhood $U$ of $o$, such that $\big(\iota^*(x_{i_1}),\dotsc,\iota^*(x_{i_k})\big)$ is a coordinate system on $U$ for certain $i_1<\dotsb<i_k$.
\end{definition}

\begin{lemma}\label{la:4.3}
	If $\dim N=(p-k,q-l)$ and $\dim M=(p,q)$, one can choose $x=(u,\xi)$ such that $\iota^*(u_i)=\iota^*(\xi_j)=0$ for $i=1,\dotsc,k$ and $j=1,\dotsc,l$.
\end{lemma}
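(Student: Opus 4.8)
The plan is to start from the coordinate system furnished by the immersion hypothesis and then correct the ``superfluous'' coordinates by subtracting functions of the surviving ones, so that their pullbacks cancel. Concretely, I would apply the definition of immersion at the point $o\in N_0$ to obtain a coordinate system $x=(u,\xi)$ on a neighbourhood of $\iota_0(o)$ in $M$ together with a coordinate neighbourhood $U$ of $o$, such that some sub-tuple of the $\iota^*(x_i)$ is a coordinate system on $U$. Since $\dim N=(p-k,q-l)$ and pullbacks preserve parity, this sub-tuple comprises exactly $p-k$ of the even pullbacks $\iota^*(u_i)$ and exactly $q-l$ of the odd pullbacks $\iota^*(\xi_j)$. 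Permuting the $u_i$ among themselves and the $\xi_j$ among themselves, I may assume that $y\defi\big(\iota^*(u_{k+1}),\dotsc,\iota^*(u_p),\iota^*(\xi_{l+1}),\dotsc,\iota^*(\xi_q)\big)$ is a coordinate system on $U$.

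Next I would express the remaining pullbacks in terms of $y$. As $y$ is a coordinate system on $U$, the even superfunctions $\iota^*(u_i)$ ($i\le k$) and the odd superfunctions $\iota^*(\xi_j)$ ($j\le l$) each admit a unique expansion of the form \eqref{eq:1} in the coordinates $y$; write $\iota^*(u_i)=F_i(y)$ and $\iota^*(\xi_j)=G_j(y)$ for these coordinate expressions. The key step is to \emph{transplant} $F_i$ and $G_j$ to $M$: using the same coordinate expressions, but now evaluated on the $M$-coordinates $(u_{k+1},\dotsc,u_p,\xi_{l+1},\dotsc,\xi_q)$, one obtains superfunctions $H_i$ and $K_j$ on a (possibly smaller) neighbourhood of $\iota_0(o)$. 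Here the neighbourhood must be shrunk so that the range condition of \autoref{prop:3.3} holds, which is possible because $\iota_0(o)$ lies in the domain of the relevant smooth coefficient functions. Since a morphism acts on superfunctions precisely by substitution into such coordinate expressions (\autoref{prop:3.3}), pullback commutes with this transplantation, whence $\iota^*(H_i)=F_i(y)=\iota^*(u_i)$ and $\iota^*(K_j)=G_j(y)=\iota^*(\xi_j)$.

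I would then set $\tilde u_i\defi u_i-H_i$ for $i\le k$ and $\tilde\xi_j\defi\xi_j-K_j$ for $j\le l$, leaving $u_i$ ($i>k$) and $\xi_j$ ($j>l$) unchanged. By the previous identities, $\iota^*(\tilde u_i)=0$ and $\iota^*(\tilde\xi_j)=0$. It remains to verify that the resulting family is again a coordinate system near $\iota_0(o)$. Since $H_i$ and $K_j$ depend only on the surviving coordinates $u_{k+1},\dotsc,u_p,\xi_{l+1},\dotsc,\xi_q$, the super-Jacobian of the change from $(u,\xi)$ to the new family is block upper-triangular with identity diagonal blocks in each parity; in particular its reduced even--even and odd--odd blocks are unipotent, hence invertible, so the Berezinian is a unit and the new family is a coordinate system. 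Relabelling so that the coordinates with vanishing pullback come first yields the asserted system $x=(u,\xi)$.

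The step I expect to be the main obstacle is the transplantation together with the cancellation identity $\iota^*(H_i)=\iota^*(u_i)$: this is precisely where one invokes the functoriality of pullback with respect to substitution into coordinate expressions, and where the range condition of \autoref{prop:3.3} forces the shrinking of the neighbourhood in $M$. The remaining points---permuting coordinates and checking invertibility of the unipotent Jacobian---are routine.
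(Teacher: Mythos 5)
Your argument is correct. The paper states \autoref{la:4.3} without proof (it is the standard local normal form for immersions; the analogous \autoref{prop:5.11} is simply referred to Leites), so there is no proof to compare against, but your route is exactly the standard one and all the steps check out: the parity count forcing $p-k$ even and $q-l$ odd surviving pullbacks, the correction $u_i\mapsto u_i-H_i$, $\xi_j\mapsto\xi_j-K_j$, and the unipotent Jacobian. The one step worth formalizing cleanly is the ``transplantation'': it amounts to using \autoref{prop:3.3} to build, on a shrunken neighbourhood $V$ of $\iota_0(o)$, the morphism $\pi\colon V\to U$ with $\pi^*(y_m)$ equal to the corresponding surviving $M$-coordinate; then $(\pi\circ\iota)^*(y_m)=y_m$ forces $\pi\circ\iota=\id$ near $o$ by the uniqueness in \autoref{prop:3.3}, and setting $H_i\defi\pi^*(\iota^*(u_i))$, $K_j\defi\pi^*(\iota^*(\xi_j))$ gives your cancellation identities $\iota^*(H_i)=\iota^*(u_i)$, $\iota^*(K_j)=\iota^*(\xi_j)$ together with the fact that $H_i$, $K_j$ depend only on the surviving coordinates, which is what makes the Jacobian unipotent.
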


For the remainder of this section we suppose $N$ to be of dimension $(p-1,q)$ and $\iota\colon N\to M$ to be an immersion.

\begin{definition}\label{def:4.4}
	The \emph{pullback}
	\begin{align*}
		\iota^*\colon\Sigma^{p-1}M\longrightarrow\Sigma^{p-1}N=\Ber N
	\end{align*}
	of integral forms of order $p-1$ is defined  as follows: For each point $o\in M_0$, choose a coordinate system $x=(x_1,\tilde x)$ at $\iota_0(o)$ as in \autoref{la:4.3} and set 
	\begin{align}
		\iota^*\lr(fDx\otimes\der/{x_i}\Pi)
			\defi\begin{cases}
				(-1)^{|Dx|}\iota^*(f)D\iota^*(\tilde x)	& i=1,\\
				0											& i\neq1.
			 \end{cases}
	\end{align}
\end{definition}

\begin{remark}
	\autoref{def:4.4} is compatible with the classical pullback \via the identification $\Psi$ from \autoref{rk:4.2}. Let $u_0=(u_{1,0},\dotsc,u_{p,0})$ be as in \autoref{la:4.3} (\ie $\iota_0^*(u_{1,0})=0$). One computes 
	\begin{align*}
		\iota_0^*\bg(\Psi(fdu_0\otimes\der/{u_{i,0}}\Pi))
			&=\iota_0^*\big((-1)^{p+i+1}fdu_{1,0}\wedge\dotsb\wedge\widehat{du_{i,0}}\wedge\dotsb\wedge du_{p,0}\big)\\
			&=\begin{cases}
				(-1)^p\iota_0^*(f)d\iota_0^*(u_{2,0})\wedge\dotsb\wedge d\iota_0^*(u_{p,0})	& i=1,\\
				0																	& i\neq1.
			\end{cases}
	\end{align*}
\end{remark}

\begin{proposition}\label{prop:4.6}
	The definition of the pullback at a certain point does not depend on the choice of the coordinate system and hence, the pullback of integral forms of order $p-1$ is well-defined.
\end{proposition}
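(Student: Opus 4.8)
The plan is to fix two coordinate systems $x=(x_1,\tilde x)$ and $y=(y_1,\tilde y)$, both adapted to $\iota$ in the sense of \autoref{la:4.3} (so $\iota^*(x_1)=\iota^*(y_1)=0$, while $\iota^*(\tilde x)$ and $\iota^*(\tilde y)$ are coordinate systems on $N$), and to verify directly that the two prescriptions from \autoref{def:4.4} agree on an arbitrary generator $fDx\otimes\frac{\partial}{\partial x_i}\Pi$. Since any element of $\Sigma^{p-1}M$ has a unique expansion of this form and the transition between the two expansions is $\OO_M$-linear, tracking how one $x$-generator expands into $y$-generators and applying the respective formulas suffices.

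The technical foundation I would establish first is a chain rule: since in adapted coordinates $\iota$ is locally the standard inclusion of the coordinate subspace $\{x_1=0\}$, tangential differentiation commutes with restriction, i.e. $\iota^*\bigl(\frac{\partial g}{\partial x_i}\bigr)=\frac{\partial\iota^*(g)}{\partial\iota^*(x_i)}$ for every $g$ and every $i\neq1$ (and likewise for the odd coordinates, all of which are tangential here since $l=0$). Applying this to $g=y_1$ gives $\iota^*\bigl(\frac{\partial y_1}{\partial x_i}\bigr)=0$ for $i\neq1$, and applying it symmetrically to $g=x_1$ shows that the restricted Jacobian $\iota^*\bigl(\frac{\partial x}{\partial y}\bigr)$ has a vanishing transversal-row/tangential-column block. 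Hence this restricted Jacobian is block triangular with respect to the graded splitting $(1,0)\oplus(p-1,q)$, its transversal block being the scalar $a\defi\iota^*\bigl(\frac{\partial x_1}{\partial y_1}\bigr)$ and its tangential block being the super-Jacobian of $\iota^*(\tilde x)$ over $\iota^*(\tilde y)$.

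The key step is then to read off the Berezinian. Multiplicativity of $\Ber$ on block-triangular supermatrices yields $\iota^*\bigl(\D x/y\bigr)=a\cdot\D{\iota^*(\tilde x)}/{\iota^*(\tilde y)}$, while $\iota^*\bigl(\frac{\partial x}{\partial y}\bigr)$ and $\iota^*\bigl(\frac{\partial y}{\partial x}\bigr)$ being mutually inverse block-triangular matrices gives $\iota^*\bigl(\frac{\partial y_1}{\partial x_1}\bigr)=a^{-1}$. Next I would rewrite the generator in the $y$-system, $fDx\otimes\frac{\partial}{\partial x_i}\Pi=\sum_k\pm\,f\,\D x/y\,\frac{\partial y_k}{\partial x_i}\,Dy\otimes\frac{\partial}{\partial y_k}\Pi$ (moving the coefficients across $\otimes_{\OO_M}$), and apply the $y$-prescription, which retains only the $k=1$ summand. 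For $i\neq1$ the retained coefficient carries the factor $\iota^*\bigl(\frac{\partial y_1}{\partial x_i}\bigr)=0$, matching the vanishing $x$-value; for $i=1$ the transversal factors combine as $\iota^*\bigl(\D x/y\cdot\frac{\partial y_1}{\partial x_1}\bigr)=a\cdot\D{\iota^*(\tilde x)}/{\iota^*(\tilde y)}\cdot a^{-1}=\D{\iota^*(\tilde x)}/{\iota^*(\tilde y)}$, and the transformation law for Berezin forms on $N$ turns $\D{\iota^*(\tilde x)}/{\iota^*(\tilde y)}D\iota^*(\tilde y)$ into $D\iota^*(\tilde x)$, recovering exactly the value $(-1)^{|Dx|}\iota^*(f)D\iota^*(\tilde x)$ of the $x$-prescription and thereby establishing the claimed independence.

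I expect the main obstacle to be the Berezinian factorization on the restricted Jacobian, together with the attendant sign bookkeeping. The factorization needs care because the relevant splitting $(1,0)\oplus(p-1,q)$ is not the even/odd splitting for which $\Ber$ is defined; I would justify it by inserting the block structure into the defining formula $\Ber\left(\begin{smallmatrix}R&S\\T&V\end{smallmatrix}\right)=\det(R-SV^{-1}T)\det V^{-1}$ and observing that the vanishing first row of $R$ and of $S$ lets the determinant split off the scalar $a$. The signs, fortunately, cause no trouble for the one surviving contribution: the coordinates $x_1,y_1$ and the coefficient $\frac{\partial y_1}{\partial x_1}$ are all even, and $|Dx|=|Dy|=b(p,q)$, so no extra sign arises when the even factor is moved across the tensor product, while every odd-coefficient term is precisely one of those annihilated by the $y$-prescription.
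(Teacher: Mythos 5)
Your proposal is correct and follows essentially the same route as the paper's proof: the chain rule on the boundary to show that the transversal coordinate of one adapted system has vanishing tangential derivatives with respect to the other, the block-triangular factorization of the restricted Berezinian via $\Ber\left(\begin{smallmatrix}R&S\\T&V\end{smallmatrix}\right)=\det(R-SV^{-1}T)\det V^{-1}$, and the cancellation of the scalar $a$ against $a^{-1}$ in the surviving $i=1$ term. The only (immaterial) difference is that you expand the $x$-generator in the $y$-system and apply the $y$-prescription, whereas the paper does the mirror-image computation, extracting $a^{-1}$ by inverting the Berezinian identity rather than from the inverse Jacobian.
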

\begin{proof}
	Let $y=(y_1,\tilde y)$ be another such coordinate system with $\iota^*(y_1)=0$. We have to compute
	\begin{align*}
		\iota^*\lr(fDy\otimes\der/{y_i}\Pi)
			&=\iota^*\Bigg(\sum_{j=1}^{p+q}(-1)^{(|x_j|+|y_i|)|Dx|}f\der{x_j}/{y_i}\D y/xDx\otimes\der/{x_j}\Pi\Bigg)\\
			&=(-1)^{(|y_i|+1)|Dx|}\iota^*\lr(f\der{x_1}/{y_i}\D y/x)D\iota^*(\tilde x),
	\end{align*}
	for $i=1,\dotsc,p+q$. For $i\geq2$ we infer, using $\iota^*(y_1)=\iota^*(x_1)=0$,
	\begin{align*}
		0=\der\iota^*(x_1)/{\iota^*(y_i)}
			=\sum_{j=1}^{p+q}\der{\iota^*(y_j)}/{\iota^*(y_i)}\iota^*\bigg(\der{x_1}/{y_j}\bigg)
			=\sum_{j=2}^{p+q}\delta_{ij}\iota^*\bigg(\der{x_1}/{y_j}\bigg)
			=\iota^*\lr(\der{x_1}/{y_i}).
	\end{align*}
	
	This implies $\iota^*\lr(fDy\otimes\der/{y_i}\Pi)=0$ for $i\geq 2$. Now we examine 
	\begin{align*}
		\iota^*\lr(\D x/y)\!
			&=\Ber\!\left(\begin{smallmatrix}
						\iota^*\lr(\der {x_1}/{y_1})		&\cdots&	\iota^*\lr(\der {x_{p+q}}/{y_1})\\
						\vdots								&\ddots&	\vdots\\
						\iota^*\lr(\der {x_1}/{y_{p+q}})	&\cdots&	\iota^*\lr(\der {x_{p+q}}/{y_{p+q}})
					\end{smallmatrix}\right)\!
			=\Ber\!\left(\begin{smallmatrix}
						\iota^*\lr(\der {x_1}/{y_1})		& *											&\cdots&	*\\
						0										& \iota^*\lr(\der {x_2}/{y_2})		&\cdots&	\iota^*\lr(\der {x_{p+q}}/{y_2})\\
						\vdots									& \vdots									&\ddots&	\vdots\\
						0										& \iota^*\lr(\der {x_2}/{y_{p+q}})	&\cdots&	\iota^*\lr(\der {x_{p+q}}/{y_{p+q}})
					\end{smallmatrix}\right)\!\\
			&=\iota^*\lr(\der {x_1}/{y_1})\D{\iota^*(\tilde x)}/{\iota^*(\tilde y)}.
	\end{align*}
	
	Here, we have made use of
	\begin{align}\label{eq:6}
		\begin{split}
			\Ber\left(\begin{array}{cc|c}
								R_1 & *		& *\\
								0	& R_2	& S\\
								\hline
								0	& T		& V
							\end{array}\right)
				&=\det R_1\Ber\begin{pmatrix}
												R_2	& S\\
												T	& V
											\end{pmatrix}.
		\end{split}
	\end{align}
	
	We arrive at $\D \iota^*(\tilde y)/{\iota^*(\tilde x)}=\iota^*\big(\der x_1/{y_1}\big)\iota^*\big(\D y/x\big)$ by inverting both sides of the above equation; hence
	\begin{align*}
		\iota^*\lr(fDy\otimes\der/{y_1}\Pi)&=(-1)^{|Dy|}\iota^*(f)D\iota^*(\tilde y).\qedhere
	\end{align*}
\end{proof}

For the formulation of Stokes's theorem, we need to anticipate a later result (\autoref{cor:5.13}). Let $U\subset M$ such that $U_0$ has smooth boundary $\partial U_0$ in $M_0$. Further, let $\gamma$ be a retraction on $M$. 

Then there exists a \emph{unique} supermanifold structure $\partial_\gamma U$ of dimension $(p-1,q)$ on $\partial U_0$, together with an immersion $\iota\colon \partial_\gamma U\to M$ and a unique retraction $\partial\gamma$ on $\partial_\gamma U$ such that the following diagram commutes:
\begin{align}\label{boundarycomm}
	\begin{split}\begin{xy}
		\xymatrix{
			\partial_\gamma U\ar[r]^\iota\ar[d]_{\partial\gamma}		& M\ar[d]^\gamma\\
			\partial U_0\ar@{^{(}->}[r]_{\iota_0}	& M_0
		}
	\end{xy}\end{split}
\end{align}

\begin{theorem}[Stokes's theorem]\label{thm:stokes}\pdfbookmark[2]{Stokes's theorem for a fixed retraction}{stokes1}
	Let $U\subset M$ such that $\overline{U_0}$ is compact and has smooth boundary $\partial U_0$, and let $\gamma$ be a retraction on $M$. Let $M_0$ be oriented, and endow $\partial U_0$ with the usual boundary orientation. Then for $\varpi\in\Sigma^{p-1}M$ we have
	\begin{align}\label{eq:stokes}
		\int_\lr(U,\gamma)d\varpi=(-1)^{s(p,q)+s(p-1,q)+q}\int_{\partial_\gamma U}\iota^*(\varpi),
	\end{align}
	whenever the integral on the left hand side exists.
\end{theorem}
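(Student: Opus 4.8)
The plan is to reduce the statement to the \emph{classical} Stokes theorem on $U_0$ by pushing everything forward along $\gamma$ through the fibre integral $\gamma_!$. Two computations make this work. First, for an \emph{odd} coordinate field the expansion \eqref{eq:1} of $\partial_{\xi_j}(gf)$ has no $\xi^{(1,\dotsc,1)}$-term, so $\gamma_!(\Lie_{g\partial/\partial\xi_j}\omega)=0$; the odd directions drop out entirely. Second, for an even field $\gamma_!(\Lie_{\partial/\partial u_i}(fDx))=(-1)^{s(p,q)}\partial(f_{(1,\dotsc,1)})/\partial u_{i,0}\,du_0$, i.e.\ $\gamma_!$ converts the Cartan derivative into an ordinary base derivative. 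Hence $\gamma_!(d\varpi)$ is a classical top-degree form of divergence type, whose integral is governed by the fundamental theorem of calculus.

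First I would localize. Choosing a partition of unity $(\rho_\alpha)$ on $M_0$ with $\sum_\alpha\rho_\alpha=1$, finite on the compact set $\overline{U_0}$, I replace $\varpi$ by $\sum_\alpha\gamma^*(\rho_\alpha)\varpi$. Since each $X$ is a derivation and $\rho_\alpha$ is even, $\Lie_X(\gamma^*(\rho_\alpha)\omega)=X(\gamma^*(\rho_\alpha))\omega+\gamma^*(\rho_\alpha)\Lie_X\omega$, and the error terms sum to $X(\gamma^*(\sum_\alpha\rho_\alpha))\omega=0$; thus both sides of \eqref{eq:stokes} are additive over the pieces $\gamma^*(\rho_\alpha)\varpi$, and it suffices to treat $\varpi$ supported in a single chart. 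There I choose, by \autoref{la:4.3}, an adapted system $x=(u_1,\tilde x)$ with $\iota^*(u_1)=0$, $\partial U_0=\{u_{1,0}=0\}$ and $U_0$ on one side, where $\tilde x=(u_2,\dotsc,u_p,\xi_1,\dotsc,\xi_q)$ restricts to a coordinate system on $\partial_\gamma U$.

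Writing $\varpi=\sum_{i=1}^{p+q}f_iDx\otimes\partial/\partial x_i\,\Pi$, the terms with $i>p$ contribute nothing to $\gamma_!(d\varpi)$, and the even terms give a sum of base derivatives $\partial(f_i)_{(1,\dotsc,1)}/\partial u_{i,0}$. Integrating over $U_0$ with compact support, the tangential directions $i\geq2$ integrate to zero, while $i=1$ yields, by the fundamental theorem of calculus, a boundary integral of $(f_1)_{(1,\dotsc,1)}|_{u_{1,0}=0}$ against the induced volume form on $\partial U_0$ (an interior chart produces zero on both sides). On the right, \autoref{def:4.4} gives $\iota^*(\varpi)=(-1)^{|Dx|}\iota^*(f_1)D\iota^*(\tilde x)$, so the same normal component survives; applying $(\partial\gamma)_!$ produces the top coefficient of $\iota^*(f_1)$ with respect to $\partial\gamma$. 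The commuting square \eqref{boundarycomm}, which reads $\iota^*\circ\gamma^*=(\partial\gamma)^*\circ\iota_0^*$, identifies this with $\iota_0^*\big((f_1)_{(1,\dotsc,1)}\big)$, so the two boundary integrands coincide up to a universal sign.

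The main obstacle is the bookkeeping that makes the boundary data and the sign match intrinsically. Matching the data relies on \autoref{cor:5.13} and \eqref{boundarycomm}, which force $\partial\gamma$ to be the retraction associated with $\iota_0^*(u_{2,0}),\dotsc,\iota_0^*(u_{p,0})$; this yields the coefficient identity above and the independence of the adapted chart. For the sign, one collects $(-1)^{s(p,q)}$ from $\gamma_!$ and the Cartan sign $(-1)^{|\omega_1|}=(-1)^{|f_1|+|Dx|}$ of the $i=1$ term, against $(-1)^{|Dx|}$ from \autoref{def:4.4} and $(-1)^{s(p-1,q)}$ from $(\partial\gamma)_!$. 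The convention-dependent factor $|Dx|=b(p,q)$ cancels between the two sides; the residual $(-1)^{|f_1|}$ equals $(-1)^q$, because only the summand of $f_1$ of parity $q$ contributes a nonzero top coefficient $(f_1)_{(1,\dotsc,1)}$. Together with the orientation sign supplied by the fundamental theorem of calculus (fixed by the standard outward-normal convention on $\partial U_0$), these collapse to $(-1)^{s(p,q)+s(p-1,q)+q}$. Verifying this collapse, and fixing the boundary-orientation convention consistently, is the one genuinely delicate point.
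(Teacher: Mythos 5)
Your proposal is correct and follows essentially the same route as the paper: both push the statement down to $U_0$ via $\gamma_!$, using exactly the two facts that $\gamma_!$ intertwines the Cartan derivative with the classical exterior derivative (odd directions dying, even directions becoming base derivatives) and that $(\partial\gamma)_!\circ\iota^*=\pm\,\iota_0^*\circ\gamma_!$ via the commuting square \eqref{boundarycomm}. The only cosmetic difference is that the paper packages these as the two global identities \eqref{eq:retpullb}--\eqref{eq:retcartan} and then cites the classical Stokes theorem, whereas you localize with a partition of unity and unfold that classical step by hand through the fundamental theorem of calculus; the sign bookkeeping you describe (cancellation of $b(p,q)$, the residual $(-1)^{q}$ from the top coefficient) matches the paper's computation.
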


For the special choice $s(p,q)=pq+\fr{q(q-1)}/2$, the sign in Stokes's formula disappears. Therefore this choice might be reasonable in this context.

We make the following subtle point: The integral on the right hand side of \Cref{eq:stokes} does not depend on the boundary retraction $\partial\gamma$. However, one still has to take into account the boundary data $(\partial_\gamma U,\iota)$. In order to clarify this, we consider the following example.

\begin{example}\label{ex:4.6}
	Let $M=\RR^{1,4}$ and $U_0=\ ]0,\infty[$. Let $x=(u,\xi_1,\xi_2,\xi_3,\xi_4)$ be the standard coordinate system on $M$ ($u=u_0=\id_{M_0}$). We define another coordinate system $y=(v,\eta_1,\eta_2,\eta_3,\eta_4)$ by
	\begin{align*}
		v\defi u+\xi_1\xi_2+\xi_3\xi_4,\quad \eta_j=\xi_j,\ j=1,\dotsc, 4.
	\end{align*}
	
	Let $\gamma$ be the retraction associated with $y$, \ie $\gamma^*(v_0)=v$. In this example, there is only one possible supermanifold structure of dimension $(0,4)$ on $\partial U_0={0}$, namely $\partial U=\RR^{0,4}$. 
		
	Now one might think that the immersion $\iota$ is just given by
	\begin{align*}
		\iota^*\colon C^\infty(M_0)\otimes\Wedge\big(\RR^4\big)^{*}\longrightarrow\Wedge\big(\RR^4\big)^{*},\ \sum_\nu f_\nu\xi^\nu\longmapsto\sum_\nu f_\nu(0)\xi^\nu.
	\end{align*}
	
	Let us examine where this leads to. Define $\varpi\defi\fr 1/2 v^2Dy\otimes\der/v\Pi\in\Sigma^0 M$. We compute $d\varpi=\pm vDy$, which implies that $\int_\lr(U,\gamma)d\varpi=0$.
	
	Since $\iota^*(u)=0$ we have to calculate $\varpi$ in the $x$-coordinates. We see $\D y/x=1$ and $\der/v=\der/u$, hence $\varpi=\left(\fr 1/2 u^2+u(\xi_1\xi_2+\xi_3\xi_4)+\xi_1\xi_2\xi_3\xi_4\right)\otimes\der /u\Pi$. This means that $\iota^*(\varpi)=\pm\xi_1\xi_2\xi_3\xi_4D\xi$ and therefore
	\begin{align*}
		\int_{\partial U}\iota^*(\varpi)=\pm1\neq0=\pm\int_\lr(U,\gamma)d\varpi.
	\end{align*}
	
	The reason for this supposed contradiction is that with the chosen immersion $\iota$, Diagram~\eqref{boundarycomm} does not commute. The correct immersion is
	\begin{align*}
		\sum_\nu\gamma^*(f_\nu)\xi^\nu\longmapsto\sum_\nu f_\nu(0)\xi^\nu.
	\end{align*}
\end{example}

\begin{remark}
	Stokes's theorem for supermanifolds was proved \cite{BL77} for the case of domains with compact boundary. The domain of integration there is a \emph{closed superdomain} which is characterised locally by an equation $u_1\geq 0$. This corresponds to our choice of a retraction. The boundary of the closed superdomain is given locally by the equation $u_1=0$, similar to the unique structure on the boundary, which we get from diagram \eqref{boundarycomm}.
	
	In \cite{Man97}, the theorem is stated as follows: One starts with a supermanifold structure on the boundary together with an immersion. It is remarked that the boundary is given locally by an equation $u_1=0$ (\cf \autoref{la:4.3}), and $U$ by $u_1>0$. The conclusion as it is stated is correct only if the integral is evaluated by using a coordinate system which contains $u_1$. This means, that the integral of $U$ depends on the chosen immersion. 
	
	We feel that this formulation may easily be misunderstood, as in the above example, whereas the statement in terms of retractions might be more descriptive. As we shall see at the end of \autoref{boundary_smfd}, Stokes's theorem admits an extension to the case of an arbitrary immersion; however, in this case, additional terms will appear in the formula.
\end{remark}

In the \emph{proof} of \autoref{thm:stokes}, we need a generalisation of $\gamma_!$ to integral forms.
\begin{definition}
	Define $\gamma_!\colon\Sigma^{p-1}M\to\Sigma^{p-1}M_0=\Omega^{p-1}M_0$ locally \via
	\begin{align*}
		\gamma_!\lr(\omega\otimes\der/{x_i}\Pi)\defi
			\begin{cases}
				\gamma_!(\omega)\otimes\der/{u_{i,0}}\Pi	& i\leq p,\\
				0											& i>p.
			\end{cases}
	\end{align*}
	
	Here $x=(u,\xi)$ is a coordinate system with which $\gamma$ is associated. We check that the definition is independent of this choice. To that end, let $y=(v,\eta)$ be another coordinate system with $\gamma^*(v_0)=v$. 
	
	Then for $i=1,\dotsc,p$ we have $\der/{v_i}=\sum_{k=1}^p\gamma^*\big(\der{u_{k,0}}/{v_{i,0}}\big)\der/{u_k}$, hence
	\begin{align*}
		\gamma_!\left(\omega\otimes\der/{v_i}\Pi\right)
			&=\sum_k\gamma_!\left(\omega\,\gamma^*\lr(\der{u_{k,0}}/{v_{i,0}})\otimes\der/{u_k}\Pi\right)\\
			&=\sum_k\gamma_!(\omega)\der{u_{k,0}}/{v_{i,0}}\otimes\der/{u_{k,0}}\Pi
			=\gamma_!(\omega)\otimes\der/{v_{i,0}}\Pi.
	\end{align*}
	
	Similarly, we have $\gamma_!\big(\omega\otimes\der/{\eta_j}\Pi\big)=0$ for $j=1,\dotsc,q$. 
\end{definition}

\begin{proof}[\proofname\ of \autoref{thm:stokes}]
	We only need to check the equations
	\begin{align}
		\lr(\partial\gamma)_!\bg(\iota^*(\varpi))
			&=(-1)^{b(p,q)+b(p,0)+s(p,q)+s(p-1,q)}\,\iota_0^*\bg(\gamma_!(\varpi)),\label{eq:retpullb}\\
		\gamma_!(d\varpi)
			&=(-1)^{b(p,q)+b(p,0)+q}\,d\gamma_!(\varpi)\label{eq:retcartan}.
	\end{align}
	
	With these identities, we are able to apply the classical Stokes's theorem: 
	\begin{align*}
		\int_\lr(U,\gamma) d\varpi
			&=\int_{U_0}\gamma_!(d\varpi)
			=\pm\int_{U_0}d\gamma_!(\varpi)
			=\pm\int_{\partial U_0}\iota_0^*\bg(\gamma_!(\varpi))\\
			&=\pm\int_{\partial U_0}\lr(\partial\gamma)_!\bg(\iota^*(\varpi))
			=(-1)^{s(p,q)+s(p-1,q)+q}\int_\lr(\partial_\gamma U,\partial\gamma)\iota^*(\varpi).
	\end{align*}
	The claim follows from \autoref{retrcpt}.

	Equations \eqref{eq:retpullb} and \eqref{eq:retcartan} can be checked locally. Let $u_0=(u_{1,0},\dotsc,u_{p,0})$ be a coordinate system such that $\iota_0^*(u_{1,0})=u_{1,0}|_{\partial U_0}=0$ and set $u\defi\gamma^*(u_0)$. We supplement $u$ to a coordinate system $x=(u_1,\tilde x)=(u,\xi)$. 
	
	Without loss of generality, we write $\varpi=fDx\otimes\der/{x_i}\Pi$ and $f=\sum_\nu\gamma^*(f_\nu)\xi^\nu$. Noticing that $\partial\gamma$ is the retraction associated with $\iota^*(\tilde x)$, we get for $i=1$:
	\begin{align*}
		\lr(\partial\gamma)_!\bg(\iota^*(\varpi))
			&=(-1)^{|Dx|}\lr(\partial\gamma)_!\Big(\sum_\nu\iota^*\bg(\gamma^*(f_\nu))\iota^*(\xi)^\nu D\iota^*(\tilde x)\Big)\\
			&=(-1)^{|Dx|}\lr(\partial\gamma)_!\Big(\sum_\nu(\partial\gamma)^*\bg(\iota_0^*(f_\nu))\iota^*(\xi)^\nu D\iota^*(\tilde x)\Big)\\
			&=(-1)^{|Dx|+s(p-1,q)}\iota_0^*(f_\lr(1,\dotsc,1))d\iota_0^*(\tilde u_0)\\
			&=(-1)^{|Dx|+|du_0|+s(p-1,q)}\iota_0^*\left(f_\lr(1,\dotsc,1)du_0\otimes\der/{u_{1,0}}\Pi\right)\\
			&=(-1)^{b(p,q)+b(p,0)+s(p,q)+s(p-1,q)}\iota_0^*\bg(\gamma_!(\varpi)).
			\intertext{In case $i>1$, both sides of the equation vanish. As for the second equation, the case $i>p$ is easy, and we compute for $i\leq p$:}
		\gamma_!(d\varpi)
			&=\gamma_!\left((-1)^{|x_i\Pi||fDx|}\der f/{x_i}Dx\right)\\
			&=\gamma_!\left(\sum_\nu(-1)^{|\xi^\nu Dx|}\gamma^*\lr(\der {f_\nu}/{u_{i,0}})\xi^\nu Dx\right)\\
			&=(-1)^{|Dx|+q+s(p,q)}\,\der {f_\lr(1,\dotsc,1)}/{u_{i,0}}du_0\\
			&=(-1)^{|Dx|+|du_0|+q+s(p,q)}\,d\left(f_\lr(1,\dotsc,1)du_0\otimes\der/{u_{i,0}}\Pi\right)\\
			&=(-1)^{b(p,q)+b(p,0)+q}\,d\gamma_!(\varpi).\qedhere
	\end{align*}
\end{proof}

\section{Boundary terms---the local picture}\label{coordchange}

We will begin our examination of the behaviour of the Berezin integral under coordinate changes. In view of \autoref{cor:3.9}, what we need to understand is how the integrals for different retractions are related. 

We start with the following observation on a coordinate neighbourhood $U$. Let $\gamma$ and $\gamma'$ be retractions on $U$. Choose a classical coordinate system $u_0$ on $U_0$ and define $u\defi\gamma^*(u_0)$ and $v\defi\gamma'^*(u_0)$. We complete these to coordinate systems $x=(u,\xi)$ and $y=(v,\eta)$ with $\xi=\eta$.

Following \autoref{prop:3.3}, we know of the existence of a unique isomorphism $\varphi\colon U\to U$ such that $\varphi^*(x_i)=y_i$, $i=1,\dotsc,p+q$. Of course, this implies $\varphi_0=\id_{U_0}$ and $\varphi^*\gamma=\gamma'$.

If $\omega$ is a Berezin density on $U$, \autoref{cor:3.9} tells us that
\begin{align*}
	\int_\lr(U,\gamma')\varphi^*\omega=\int_\lr(U,\gamma)\omega,
\end{align*}
whenever one of both integrals exists. One might interpret this as a first formula for coordinate changes. However, a more explicit expression is desirable. For this reason we take a closer look at $\varphi$.

As one can conclude from the proof of \autoref{prop:3.3}, $\varphi$ is given by
\begin{align}\label{eq:phiretchange}
	\varphi^*=\sum_{j\in\multi(p,0)}\fr 1/{j!}(v-u)^i\diff u^i,
\end{align}
with $(v-u)^i\defi(v_1-u_1)^{i_1}\dotsm(v_p-u_p)^{i_p}$ and $i!\defi i_1!\dotsm i_p!$. Since $u_0=v_0$, we have that $v_s-u_s$ is nilpotent for each $s$, so the sum is finite. Thus, $\varphi^*$ is a differential operator of order at most $\lfloor\fr q/2\rfloor$.

There is a natural action of differential operators on Berezin densities; the following proposition can be found in Ref.~\cite{Ch94}.

\begin{proposition}
	Let $\Diff(M)$ be the set of differential operators on $M$. There is a unique $\OO(M)$-right linear action of $\Diff(M)$ on $|\Ber| M$ such that
	\begin{align}\label{eq:diffactber}
		\omega.X=-(-1)^{|X||\omega|}\Lie_X\omega
	\end{align}
	for all $X\in\XX M\subseteq\Diff(M)$ and $\omega\in|\Ber|M$.
\end{proposition}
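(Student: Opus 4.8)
The plan is to exploit the fact that $\Diff(M)$ is generated, as a filtered associative algebra, by its order-zero part $\OO(M)$ and by the vector fields $\XX(M)\subseteq\Diff(M)$; locally every operator has the form $D=\sum_I f_I\,\partial_x^I$ with $f_I\in\OO(M)$ and $\partial_x^I$ an iterated coordinate derivation. Since the action is required to be $\OO(M)$-right linear, its restriction to $\OO(M)=\Diff(M)^0$ must coincide with the given module structure, while on $\XX(M)$ it is prescribed by $\omega.X=-(-1)^{|X||\omega|}\Lie_X\omega$. The right-action axiom $(\omega.D).D'=\omega.(DD')$ then forces
\begin{align*}
	\omega.D=\sum_I(\omega.f_I).\partial_x^I,
\end{align*}
where $\omega.\partial_x^I$ denotes the iterated application of the vector-field action. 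This already yields uniqueness, so the whole content of the proposition is \emph{existence}, i.e.\ consistency of this prescription.

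For existence I would regard $(\OO(M),\XX(M))$ as a Lie--Rinehart superalgebra, whose anchor is the tautological action of vector fields on functions, and recall (\cf \cite{Ch94}) that $\Diff(M)$ is its enveloping algebra; equivalently, $\Diff(M)$ is presented by the generators $\OO(M)$ and $\XX(M)$, subject to the relations that the product of two functions agrees with their pointwise product, that $[X,f]=X(f)$, and that the super-commutator of two vector fields is their Lie bracket, for $X,Y\in\XX(M)$ and $f\in\OO(M)$. Consequently, to produce the desired right module it suffices to check that the prescribed action on generators respects these relations; concretely, this amounts to the two identities
\begin{align*}
	(\omega.X).f-(-1)^{|X||f|}(\omega.f).X&=\omega.\bigl(X(f)\bigr),\\
	(\omega.X).Y-(-1)^{|X||Y|}(\omega.Y).X&=\omega.[X,Y].
\end{align*}

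Both identities reduce to standard properties of the Lie derivative on Berezin densities (resp.\ forms). The second is the assertion that $\Lie$ is a morphism of Lie superalgebras, $\Lie_{[X,Y]}=[\Lie_X,\Lie_Y]$; a short computation shows that the particular sign $-(-1)^{|X||\omega|}$ in the defining formula is exactly what converts this homomorphism property into the displayed right-action identity. The first identity is the graded Leibniz rule $\Lie_X(f\omega)=X(f)\,\omega+(-1)^{|X||f|}f\,\Lie_X\omega$, which I would verify from the local expression $\Lie_{g\partial_{x_i}}(f\,Dx)=(-1)^{|g||x_i|}\partial_{x_i}(gf)\,Dx$ by two applications of the super-Leibniz rule for $\partial_{x_i}$; substituting this into the definitions, together with $\omega.f=(-1)^{|f||\omega|}f\omega$, makes the two $f\,\Lie_X\omega$-terms cancel and leaves precisely $\omega.(X(f))$. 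Since $\Lie_X$ is coordinate independent (\cite[Lemma~2.4.6]{Le80}), these local verifications are intrinsic and patch together, and the same formulas cover the case of Berezin forms.

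The routine but genuinely delicate part is the sign bookkeeping in the two displayed identities: every parity $|x_i|$, $|f|$, $|X|$, $|\omega|$ and the density parity $b(p,q)$ must be tracked, and it is exactly there that the factor $-(-1)^{|X||\omega|}$ is pinned down. If one prefers to bypass the enveloping-algebra formalism, the equivalent obstacle resurfaces as the need to prove that the local formula $\omega.D=\sum_I(\omega.f_I).\partial_x^I$ is independent of the chosen coordinates before gluing; this coordinate independence is logically the same computation as verifying the two relations above.
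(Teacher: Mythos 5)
The paper does not prove this proposition at all---it is quoted from Chemla \cite{Ch94} without argument---so there is no in-paper proof to match; your task is therefore to supply one, and your outline does so correctly, and moreover along the lines of the cited reference (Chemla's paper is precisely about enveloping algebras of $(k,A)$-Lie superalgebras, i.e.\ the Lie--Rinehart framework you invoke). Uniqueness from the generation of $\Diff(M)$ by $\OO(M)$ and $\XX(M)$ is right, and your two compatibility identities are exactly the relations presenting the Rinehart enveloping algebra; I checked that with the paper's conventions ($\omega.f=(-1)^{|f||\omega|}f\omega$ and $\omega.X=-(-1)^{|X||\omega|}\Lie_X\omega$) the first identity reduces to the graded Leibniz rule $\Lie_X(f\omega)=X(f)\omega+(-1)^{|X||f|}f\Lie_X\omega$ and the second to $\Lie_{[X,Y]}=[\Lie_X,\Lie_Y]$, both of which follow from the local formula $\Lie_{g\partial_{x_i}}(fDx)=(-1)^{|g||x_i|}\partial_{x_i}(gf)Dx$ together with its coordinate independence. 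Two small additions would make the argument complete: (a) the presentation of the enveloping algebra also contains the relation identifying the $\OO(M)$-module product $fX$ in $\XX(M)$ with the algebra product $f\cdot X$ in $\Diff(M)$, so you must additionally check $\omega.(fX)=(\omega.f).X$ --- this drops out of the same local formula, since $\Lie_{g\partial_{x_i}}$ is by definition $\pm\Lie_{\partial_{x_i}}$ precomposed with multiplication by $g$, but it is a separate relation from the two you list; and (b) one should observe that the resulting action agrees with the paper's explicit local expression \eqref{eq:diffactberexplicit}, which is what is actually used later. Neither point affects the validity of your approach.
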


The corresponding statement for $\Ber M$ is also correct. Note that the additional minus sign in \Cref{eq:diffactber} cannot be omitted. 

The so defined action is compatible with restrictions and pullbacks, \ie
\begin{align}
	(\omega.A)|_U		&=\omega|_U.A|_U,\\
	\varphi^*(\omega.A)	&=\varphi^*(\omega).\varphi^*(A)\label{eq:13},
\end{align}
where $\varphi^*(A)\defi\varphi^*\circ A\circ\varphi^{*-1}$.

In the local picture this action has the form:
\begin{align}\label{eq:diffactberexplicit}
	\omega.A=|Dx|\sum_{j\in\multi(p,q)}(-1)^{|j|+|fa_j||j_\odd|+\fr{|j_\odd|(|j_\odd|-1)}/2}\diff x^j(fa_j).
\end{align}

Here, $\omega=|Dx|f$ and $A=\sum_ja_j\diff x^j$; moreover, we set 
\begin{align*}
	j_\odd\defi(j_{p+1},\dotsc,j_{p+q})\quad\text{and}\quad|j|\defi j_1+\dotsb+j_{p+q}.
\end{align*}

With this definition, the pullback \via a morphism and the action of differential operators are compatible. 

\begin{proposition}\label{thm:5.2}
	Let $\varphi\colon M\to M$ be a isomorphism such that $\varphi^*$ is a differential operator. Then we have for each Berezin density $\omega\in|\Ber| M$
	\begin{align*}
		\omega=\varphi^*(\omega.\varphi^*).
	\end{align*}
	The same is true for Berezin forms, if $\varphi_0$ is orientation preserving.
\end{proposition}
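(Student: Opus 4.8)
The plan is to reduce the claim to a single coordinate computation. Both operations involved are local: the pullback of densities is defined chartwise, and the right action satisfies $(\omega.A)|_U=\omega|_U.A|_U$. Moreover, the hypothesis that $\varphi^*$ be a differential operator forces $\varphi_0=\id_{M_0}$, since the pullback along a nontrivial diffeomorphism of the base is not of finite order; hence $\varphi$ preserves every coordinate neighbourhood, and by \Cref{eq:phiretchange} its pullback has there the explicit form $\varphi^*=\sum_i\frac1{i!}(v-u)^i\diff u^i$, with $u=\gamma^*(u_0)$, $v=\gamma'^*(u_0)$ and $\varphi^*(\xi)=\xi$. As $v-u$ is even and nilpotent, this is a genuine finite-order differential operator, and the general case with $\varphi^*(\xi)\neq\xi$ is treated in the same way.

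First I would remove one pullback using the compatibility \Cref{eq:13}. Taking $A=\varphi^*$ and noting $\varphi^*(\varphi^*)=\varphi^*\circ\varphi^*\circ\varphi^{*-1}=\varphi^*$, it yields $\varphi^*(\omega.\varphi^*)=\varphi^*(\omega).\varphi^*$. Hence the assertion is equivalent to $\varphi^*(\omega).\varphi^*=\omega$ and, after substituting $(\varphi^{-1})^*\omega$ for $\omega$ (permissible, pullback along $\varphi$ being invertible with inverse the pullback along $\varphi^{-1}$), to the clean identity $\omega.\varphi^*=(\varphi^{-1})^*\omega$, asserting that the right action of the operator $\varphi^*$ agrees with the pullback of densities along the inverse morphism. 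Granting this, the Proposition follows by applying $\varphi^*$ and using $\varphi^*\circ(\varphi^{-1})^*=\id$; the Berezin-form statement is identical once $\varphi_0$ is orientation preserving, so that $\Da x/y$ may be replaced by $\D x/y$ throughout.

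It then remains to verify $\omega.\varphi^*=(\varphi^{-1})^*\omega$ in coordinates. Writing $\omega=f|Dx|$, the left-hand side is given by the explicit action formula \Cref{eq:diffactberexplicit}, a finite sum over multi-indices $j$ of terms $\diff x^j(fa_j)$ weighted by the sign-and-factorial factors recorded there; the right-hand side is, by definition of the pullback, $(\varphi^{-1})^*(f)\,\Da{\varphi^*(x)}/{x}\,|Dx|$. I expect the main obstacle to be precisely the matching of these two expressions: one must show that the combinatorial signs attached to $j$ (through $|j|$ and $j_\odd$) reproduce, term by term in the nilpotent generators $v-u$ and after collecting equal powers of $\xi$, the product of the Taylor operator $(\varphi^{-1})^*$ acting on $f$ with the expansion of the Berezinian $\Da{\varphi^*(x)}/{x}$. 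The finiteness of all sums (nilpotence of $v-u$) keeps this bookkeeping effective, and the cancellation is already transparent in Rudakov's \autoref{ex:3.4}, where $\varphi^*=1+\xi_1\xi_2\der/u$ and one checks $f|Dx|=\varphi^*\bigl((f|Dx|).\varphi^*\bigr)$ in two lines. As a conceptual safeguard, for compactly supported $\omega$ both sides have equal integral: the pullback side by \autoref{cor:3.9} together with \autoref{retrcpt}, and the action side because the positive-order part of $\varphi^*$ produces Lie derivatives, which integrate to zero by \autoref{thm:stokes}. This confirms the identity up to the pointwise sharpening that the local computation provides.
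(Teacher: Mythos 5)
Your reduction is sound as far as it goes: \Cref{eq:13} with $A=\varphi^*$ gives $\varphi^*(\omega.\varphi^*)=\varphi^*(\omega).\varphi^*$, so the Proposition is equivalent to the identity $\omega.\varphi^*=(\varphi^{-1})^*\omega$. But that identity is exactly where the content of the Proposition lies, and you do not prove it. You describe the two coordinate expressions that would have to be matched, call their comparison ``the main obstacle'', and then substitute for the computation (a) a verification in Rudakov's example and (b) the observation that both sides have the same \emph{total} integral when $\omega$ is compactly supported. Neither closes the gap: an example is not a proof, and equality of total integrals does not determine a density pointwise---as you yourself concede in your last sentence. Moreover, the identity $\omega.\varphi^*=(\varphi^{-1})^*\omega$ is, in the paper's own account, Rothstein's formula \eqref{eq:e-Y}, which the paper \emph{deduces from} \autoref{thm:5.2}; so as written your argument defers the entire difficulty to an unproven, combinatorially delicate claim.

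The fix is already latent in your ``conceptual safeguard'': test against all compactly supported $h\in\OO(M)$ rather than integrating once. Integration by parts, $\int_M(\omega.X)h=\int_M\omega X(h)$, holds because $\omega X(h)-(\omega.X)h=(-1)^{|\omega||X|}\Lie_X(\omega h)$ and the integral of the Lie derivative of a compactly supported density vanishes (this is \cite[Lemma~2.4.8]{Le80}, not \autoref{thm:stokes}, which concerns domains with boundary); iterating gives $\int_M(\omega.A)h=\int_M\omega A(h)$ for any differential operator $A$. Combined with the pullback invariance of the integral (\autoref{cor:3.9} together with \autoref{retrcpt}, using $\varphi_0=\id$), this yields $\int_M\varphi^*(\omega.\varphi^*)h=\int_M(\omega.\varphi^*)\varphi^{*-1}(h)=\int_M\omega h$ for every such $h$, and equality of densities follows since $h$ is arbitrary. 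This is the paper's proof; it sidesteps the sign bookkeeping entirely, and it is what your integral check becomes once you insert the test function.
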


\begin{proof}
	Let $h\in\OO(M)$ be compactly supported. In our notation, integration by parts takes the form
	\begin{align}\label{eq:partialint}
		\int_M(\omega.X)h=\int_M\omega X(h)
	\end{align}
	for any derivation $X\in\XX M$. To see this, one checks
	\begin{align*}
		\omega X(h)-(\omega.X) h=\omega X(h)+(-1)^{|\omega||X|}(\Lie_X\omega)h=(-1)^{|\omega||X|}\Lie_X(\omega h).
	\end{align*}
	
	Since $\omega h$ is compactly supported, the integral of the right hand side vanishes \cite[Lemma~2.4.8]{Le80}.
	
	Iteratively applying \Cref{eq:partialint}, we get for any $A\in\Diff(M)$
	\begin{align*}
		\int_M(\omega.A)h=\int_M\omega A(h).
	\end{align*}
	
	Using \autoref{cor:3.9}, we conclude
	\begin{align*}
		\int_M\varphi^*(\omega.\varphi^*)h\,
			&=\int_M(\omega.\varphi^*)\varphi^{*-1}(h)\,
			=\int_M\omega\,\varphi^*\bg(\varphi^{*-1}(h))
			=\int_M\omega h.
	\end{align*}
	
	Since $h$ was arbitrary, the assertion follows. 
\end{proof}

\begin{remark}
	\autoref{thm:5.2} is closely related to Ref.~\cite[Theorem~3.2]{Ro87}. It is shown there that for $\varphi^*=\sum_j a_j\diff x^j$, the inverse is 
	\begin{align}\label{eq:e-Y}
		\varphi^{*-1}=\sum_j(-1)^{|j|+\fr{|j_\odd|(|j_\odd|+1)}/2}\diff x^j\circ a_j\D y/x,
	\end{align}
	where $y=\varphi^*(x)$ for certain coordinate systems $x$ and $y$ (where $\varphi^*$ is denoted $e^Y$). 
	
	In fact, \Cref{eq:e-Y} can be deduced from \Cref{thm:5.2} and \Cref{eq:diffactberexplicit} by calculating for $f\in\OO(U)$
	\begin{align*}
		Dx\,\varphi^{*-1}(f)=\varphi^{*-1}(Dy f)=\lr(Dx\D y/xf).\varphi^*.
	\end{align*}
	
	As an aside, note that the coefficients are $a_j=\fr 1/{j!}(y_{p+q}-x_{p+q})^{i_{p+q}}\dotsm(y_1-x_1)^{i_1}$.
\end{remark}

We use \autoref{thm:5.2} to derive an explicit expression for the Berezin integral under the change of retractions. 

\begin{theorem}\label{thm:5.4}\pdfbookmark[2]{Formula for coordinate changes}{formula}
	Let $U$ be a coordinate neighbourhood with two retractions $\gamma$ and $\gamma'$. Let $u_0$ be a coordinate system on $U_0$ and set $u=\gamma^*(u_0)$. Then $\omega\in|\Ber|U$ (or $\omega\in\Ber U$) is integrable with respect to $\gamma'$ and
	\begin{align*}
		\int_\lr(U,\gamma')\omega
			=\int_\lr(U,\gamma)\omega+\sum_{i\neq0}\fr1/{i!}\int_\lr(U,\gamma)\omega\big(\gamma'^*(u_0)-\gamma^*(u_0)\big)^i.\diff u^i,
	\end{align*}
	if the right hand side exists. Here, the sum is finite, and extends over $i\in\multi(p,0)$.
\end{theorem}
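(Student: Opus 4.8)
The plan is to realise the passage from $\gamma$ to $\gamma'$ as the pullback along a single isomorphism $\varphi\colon U\to U$, and then to unwind that pullback by letting $\varphi^*$ act on $\omega$ as a differential operator. Recall that $u=\gamma^*(u_0)$ and $v=\gamma'^*(u_0)$ have been completed to coordinate systems $x=(u,\xi)$ and $y=(v,\eta)$ with $\xi=\eta$, that \autoref{prop:3.3} furnishes the unique isomorphism $\varphi$ with $\varphi^*(x)=y$, and that $\varphi_0=\id_{U_0}$ and $\varphi^*\gamma=\gamma'$ (the latter because $\gamma$ is associated with $x$ while $\gamma'$ is associated with $y=\varphi^*(x)$). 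The decisive structural point is that each $v_s-u_s$ is nilpotent, since $u_0=v_0$; hence \Cref{eq:phiretchange} exhibits $\varphi^*$ as an honest differential operator, of order at most $\lfloor q/2\rfloor$, so that \autoref{thm:5.2} applies and all sums below are finite.

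With this in hand the core of the argument is a two-step reduction. Applying \autoref{cor:3.9} to the density $\omega.\varphi^*$ and the retraction $\gamma$, and using $\varphi^*\gamma=\gamma'$, gives $\int_\lr(U,\gamma')\varphi^*\lr(\omega.\varphi^*)=\int_\lr(U,\gamma)\lr(\omega.\varphi^*)$; meanwhile \autoref{thm:5.2} identifies $\varphi^*\lr(\omega.\varphi^*)=\omega$. Combining the two yields
\begin{align*}
	\int_\lr(U,\gamma')\omega=\int_\lr(U,\gamma)\lr(\omega.\varphi^*).
\end{align*}
The integrability assertion falls out of the same two facts: existence of the right-hand side of the theorem is, since the sum is finite, exactly integrability of $\omega.\varphi^*$ with respect to $\gamma$, and \autoref{cor:3.9} transfers this to integrability of $\omega=\varphi^*\lr(\omega.\varphi^*)$ with respect to $\gamma'=\varphi^*\gamma$.

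It remains to expand $\omega.\varphi^*$ explicitly. Substituting \Cref{eq:phiretchange} and using that $\Diff(U)$ acts on the right, I would move each even coefficient $\fr1/{i!}(v-u)^i$ onto $\omega$ as a right multiplication, via $\omega.\lr((v-u)^i\diff u^i)=\lr(\omega(v-u)^i).\diff u^i$; no signs appear because $(v-u)^i$ and $\diff u^i$ are even. Summing over $i\in\multi(p,0)$ and isolating the term $i=0$, where $\varphi^*$ reduces to the identity and reproduces $\int_\lr(U,\gamma)\omega$, gives precisely the stated formula. The Berezin-form version is identical, reading off the orientation-sensitive variants of \autoref{thm:5.2} and \autoref{cor:3.9}.

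The principal obstacle here is bookkeeping rather than conceptual content, the hard analytic work having been absorbed into \autoref{thm:5.2}. One must confirm that the zeroth-order factors of $\varphi^*$ act by genuine right multiplication, which is where \Cref{eq:diffactberexplicit} together with the unavoidable minus sign of \Cref{eq:diffactber} has to be read carefully, and one must keep the integrability transfer honest in this non-compactly supported setting, where neither $\int_\lr(U,\gamma)$ nor $\int_\lr(U,\gamma')$ is a priori a naive coordinate integral. Both points are tame because every $(v-u)^i$ is even and the operator expansion is finite, but they are exactly the places where a stray sign or an illegitimate interchange of integration with an infinite sum would corrupt the formula.
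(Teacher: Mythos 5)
Your proof is correct and follows essentially the same route as the paper's: realise the change of retraction as the pullback along the isomorphism $\varphi$ with $\varphi^*(x)=y$, combine \autoref{thm:5.2} ($\omega=\varphi^*(\omega.\varphi^*)$) with \autoref{cor:3.9} (using $\varphi^*\gamma=\gamma'$) to get $\int_\lr(U,\gamma')\omega=\int_\lr(U,\gamma)\omega.\varphi^*$, and then expand $\varphi^*$ \via \Cref{eq:phiretchange}. Your additional remarks on the integrability transfer and on the right multiplication by the even coefficients $(v-u)^i$ only make explicit what the paper leaves implicit.
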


\begin{proof}
	Let $x=(u,\xi)$, $y=(v,\eta)$ with $v=\gamma'^*(u_0)$ and $\xi=\eta$. With the morphism $\varphi^*$ from \Cref{eq:phiretchange} ($\varphi^*(x)=y$) we get
		\begin{align*}
		\int_\lr(U,\gamma')\omega
			&=\int_\lr(U,\gamma')\varphi^*(\omega.\varphi^*)
			=\int_\lr(U,\gamma)\omega.\varphi^*
			=\int_\lr(U,\gamma)\omega+\sum_{i\neq0}\fr1/{i!}\int_\lr(U,\gamma)\omega(v-u)^i.\diff u^i.\qedhere
	\end{align*}
\end{proof}

We assemble our results in a general change of variables formula. 

\begin{corollary}
	Suppose given coordinate systems $x=(u,\xi)$ and $y=(v,\eta)$ on $U$ and $f\in\OO(U)$. Let $\hat u_s=g_s(v)$ such that $u_s\equiv\hat u_s\text{ mod }\langle\eta\rangle$. Then
	\begin{align*}
		\int_\lr(U,y)f|Dy|=\int_\lr(U,x)f\Da y/x |Dx|+\sum_{i\neq0}\fr1/{i!}\int_\lr(U,x)\diff u^i\left(f(u-\hat u)^i\Da y/x \right) |Dx|,
	\end{align*}
	if the integrals on the right hand side exist.
\end{corollary}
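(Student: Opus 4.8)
The plan is to deduce this from \autoref{thm:5.4}, which already treats the change between two retractions that share a common classical coordinate system. The one structural move required is to rewrite the density $\omega\defi f|Dy|$ in the $x$-coordinates and to recognise $\hat u$ as the image under $\gamma'$ of the reduction of $u$.

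First I would record that, by the compatibility noted after \autoref{def:3.7}, the integral $\int_\lr(U,y)f|Dy|$ depends only on the retraction $\gamma'$ associated with the even part $v$ of $y$, and $\int_\lr(U,x)\,\cdot\,|Dx|$ only on the retraction $\gamma$ associated with $u$. Rewriting $\omega$ through the density transformation law gives $\omega=f\Da y/x|Dx|$, so that
\[
\int_\lr(U,y)f|Dy|=\int_\lr(U,\gamma')\omega,
\qquad
\int_\lr(U,x)f\Da y/x|Dx|=\int_\lr(U,\gamma)\omega.
\]
Thus the whole statement reduces to comparing $\int_\lr(U,\gamma')\omega$ and $\int_\lr(U,\gamma)\omega$.

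Next I would apply \autoref{thm:5.4} to $\gamma,\gamma'$ with $u_0\defi j_M^*(u)$, the classical coordinate system underlying $u$ (legitimate because $x$ is a coordinate system). The equation $\gamma^*(u_0)=u$ holds by definition of $\gamma$, so the only point to verify is $\gamma'^*(u_0)=\hat u$, which is exactly what the hypotheses on $\hat u$ are engineered to give: applying $j_M^*$ to $u_s\equiv\hat u_s\bmod\langle\eta\rangle$ yields $u_{s,0}=g_s(v_0)$, and since $\gamma'$ is a morphism with $\gamma'^*(v_0)=v$, the smooth functional calculus for even superfunctions gives $\gamma'^*(u_{s,0})=g_s(\gamma'^*(v_0))=g_s(v)=\hat u_s$. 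With this, \autoref{thm:5.4} yields integrability with respect to $\gamma'$ whenever the right-hand side exists, together with the terms $\int_\lr(U,\gamma)\omega(\hat u-u)^i.\diff u^i$ in which $\gamma'^*(u_0)-\gamma^*(u_0)=\hat u-u$.

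Finally I would rewrite each such summand using the explicit action formula~\eqref{eq:diffactberexplicit}. For the purely even operator $A=(\hat u-u)^i\diff u^i$ one has $i_\odd=0$, so the sign collapses to $(-1)^{|i|}$ and $\omega.A$ becomes $|Dx|\diff u^i\big(f\Da y/x(\hat u-u)^i\big)$ up to the usual density-reordering sign; invoking $(\hat u-u)^i=(-1)^{|i|}(u-\hat u)^i$ then turns this into $\diff u^i\big(f(u-\hat u)^i\Da y/x\big)|Dx|$, which is the displayed integrand. The genuine content is the identification $\gamma'^*(u_0)=\hat u$; the only real obstacle is the careful tracking of the parity signs in passing from the right action $\omega.A$ to the integrand written with $\diff u^i$ on the left, and this I would carry out entirely through~\eqref{eq:diffactberexplicit}.
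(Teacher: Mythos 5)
Your proposal is correct and follows exactly the route the paper intends: the paper states no separate proof for this corollary, but its accompanying remark (``$\hat u$ always exists: $\hat u=\gamma'^*(u_0)$, where $\gamma'$ is the retraction associated with $v$'', together with the note on eliminating the sign $(-1)^{|i|}$) is precisely your reduction to \autoref{thm:5.4} via the identification $\gamma'^*(u_0)=\hat u$ and the sign bookkeeping from \Cref{eq:diffactberexplicit}. Your explicit verification that any $\hat u$ satisfying the hypotheses must equal $\gamma'^*(u_0)$ fills in the one step the paper leaves implicit.
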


Note that we eliminated the sign $(-1)^{|i|}$ by replacing $(\hat u-u)$ with $(u-\hat u)$. Further, $\hat u$ always exists: $\hat u=\gamma'^*(u_0)$, where $\gamma'$ is the retraction associated with $v$.

\begin{remark}
	Observe that the above results bear a similarity to Ref.~\cite[Theorem~3.2]{Ro87}. Compared to Rothstein's result, the advantage of our theorem is that it is formulated in terms of Berezin densities, rather than of volume form valued differential operators. Whereas the former are a locally free $\OO_M$-module of rank $(0,1)^q$, the latter form one of infinite rank. 
\end{remark}

We apply these considerations in a few examples.

\begin{examples}\label{ex:5.6}\mbox{}
	\begin{enumerate}[(i)]
		\item
			Recall the notation from Rudakov's example (\autoref{ex:3.4}). Here we have $u=v-\eta_1\eta_2$, hence $\hat u=v$. For $f=\sum_\nu\gamma^*(f_\nu)\xi^\nu\in\OO(\RR^{1,2})$ and $\gamma^*(u_0)=u$ our recipe shows that
			\begin{align*}
				\int_\lr(\Omega,y)f|Dy|
					&=\int_\lr(\Omega,x)f|Dx|+\int_\lr(\Omega,x)\der/u\bg(f(u-v))|Dx|\\
					&=\int_\lr(\Omega,x)f|Dx|+\int_\lr(\Omega,x)\der/u(-f\xi_1\xi_2)|Dx|\\
					&=\int_\lr(\Omega,x)f|Dx|-\int_\lr(\Omega,x)\gamma^*\lr(\der{f_0}/{u_0})\xi_1\xi_2|Dx|\\
					&=\int_\lr(\Omega,x)f|Dx|-(-1)^{s(1,2)}\int_0^1\der{f_0}/{u_0}|du_0|\\
					&=\int_\lr(\Omega,x)f|Dx|-(-1)^{s(1,2)}\big(f_0(1)-f_0(0)\big).
			\end{align*}
			
			Comparing to the computations in \autoref{ex:3.4} $(f=v)$, this resolves the apparent contradictions. 
		\item\label{ex:5.6.2}
			Suppose $\Omega\subset\RR^{2,2}$ with $\Omega_0=\{(o_1,o_2)\mid o_1^2+o_2^2<1\}$. Let $y=(v,\eta)$ be a coordinate system on $\Omega$ with $v_0=\id_{\Omega_0}$. We want to compute the $y$-related integral of a compactly supported $f\in\OO(\Omega)$, by using rotational symmetry. Thus, we consider on $\Omega'\subset\Omega$ with $\Omega'_0=\Omega_0\backslash(]-\infty,0]\times 0)$ and a coordinate system $x=(u,\xi)$ on $\Omega'$, such that 
			\begin{align*}
				v_1		&=u_1\cos(u_2)(1-\xi_1\xi_2),	&\eta_1	&=u_1\xi_1,\\
				v_2		&=u_1\sin(u_2)(1-\xi_1\xi_2),	&\eta_2	&=u_1\xi_2.
			\end{align*}
			One computes $v_1^2+v_2^2+2\eta_1\eta_2=u_1^2$ and $\Da y/x=\fr1/{u_1}$. It remains to find $\hat u$.
			
			We have
			\begin{align*}
				u_1=\sqrt{v_1^2+v_2^2+2\eta_1\eta_2}
					=\sqrt{v_1^2+v_2^2}+\fr{\eta_1\eta_2}/{\sqrt{v_1^2+v_2^2}}\,,
			\end{align*}
			hence $\hat u_1=\sqrt{v_1^2+v_2^2}$ and
			\begin{align*}
				u_1-\hat u_1=\fr{\eta_1\eta_2}/{\sqrt{v_1^2+v_2^2}}
					=\fr{u_1^2\xi_1\xi_2}/{\sqrt{u_1^2(1-2\xi_1\xi_2)}}
					=u_1\xi_1\xi_2.
			\end{align*}
			Furthermore, we realise $\fr{v_1}/{v_2}=\tan(u_2)$, hence $\hat u_2=u_2$. This means that the second boundary term will vanish. 
			
			We write a compactly supported $f\in\OO(\Omega)$ as $f=\sum_\nu\gamma^*(f_\nu)\xi^\nu$ on $\Omega'$, where $\gamma$ is associated with $u$. We obtain
			\begin{align*}
				\int_\lr(\Omega,y)f|Dy|
					&=\int_\lr(\Omega',x)f\fr1/{u_1}|Dx|+\int_\lr(\Omega',x)\der/{u_1}\Big(fu_1\xi_1\xi_2\fr1/{u_1}\Bigr)|Dx|\\
					&=\int_\lr(\Omega',x)f\fr1/{u_1}|Dx|+\int_\lr(\Omega',x)\gamma^*\lr(\der{f_0}/{u_{1,0}})\xi_1\xi_2 |Dx|\\
					&=\int_\lr(\Omega',x)f\fr1/{u_1}|Dx|+(-1)^{s(2,2)}\int_{-\pi}^\pi\int_0^1\der{f_0}/{u_{1,0}}du_{1,0}du_{2,0}\\
					&=\int_\lr(\Omega',x)f\fr1/{u_1}|Dx|-(-1)^{s(2,2)}2\pi f_0(0).
			\end{align*}
			
			A similar computation is contained in \cite[proof of Theorem~1]{Zir91}.
		\item\label{ex:5.6.3}
			We redo the calculation in the first example in a more general context. Suppose $\Omega,\Omega'\subset\RR^{2,4}$, such that $\Omega_0=\Omega'_0\cap\RR_+^2$, where $\RR_+\!=\ ]0,\infty[$. Let $\gamma$ and $\gamma'$ be retractions on $\Omega'$ and let $u_0=\id_{\Omega'_0}$ be the standard coordinate system on $\Omega'_0$. Then \autoref{thm:5.4} tells us for a compactly supported Berezin density $\omega\in|\Ber|\Omega'$
			\begin{align*}
				\int_\lr(\Omega,\gamma')\omega
					=\int_\lr(\Omega,\gamma)\omega+\sum_{i\neq0}\int_\lr(\Omega,\gamma)\omega_i.\diff u^i,
			\end{align*}
			where $u\defi\gamma^*(u_0)$ and
			\begin{align*}
				\omega_i\defi\fr1/{i!}\omega\big(\gamma'^*(u_0)-\gamma^*(u_0)\big)^i=\sum_\nu\gamma^*(f_\nu^i)\xi^\nu|Dx|.
			\end{align*}
			Therefore,
			\begin{align*}
					\int_\lr(\Omega,\gamma)\omega_i.\diff u^i
						=\,(-1)^{s(p,q)+|i|}\int_0^\infty\int_0^\infty\diff {u_0}^i f_\lr(1,\dotsc,1)^i(o)do_1do_2.
			\end{align*}
			Applying the Fundamental Theorem of Calculus, we get 
			\begin{align*}
				\int_\lr(\Omega,\gamma')\omega
					=\int_\lr(\Omega,\gamma)\omega\,
					\pm\bigg(
						&-\int_0^\infty\left(f_\lr(1,\dotsc,1)^\lr(1,0)(0,o_2)-\derf f_\lr(1,\dotsc,1)^\lr(2,0)/2(0,o_2)\right)do_2\\
						&-\int_0^\infty\left(f_\lr(1,\dotsc,1)^\lr(0,1)(o_1,0)-\derf f_\lr(1,\dotsc,1)^\lr(0,2)/1(o_1,0)\right)do_1\\
						&+\derf{f_\lr(1,\dotsc,1)^\lr(1,1)(o)}/{1,2}(0,0)
						\bigg).
			\end{align*}
			This shows explicitly that the `boundary terms' indeed depend only on the values of $\omega$, and its derivatives, on the boundary. We shall presently exploit this to derive a global expression for the boundary terms. 
	\end{enumerate}
\end{examples}

\section{Boundary terms---the global picture}\label{boundary_smfd}

In this section, we will globalise the results of the previous section, using ideas from \autoref{ex:5.6}\,\eqref{ex:5.6.3}. A framework which is well suited to such a generalisation is that of supermanifolds built over manifolds with corners \cite{Mel93,Mel96}. Locally, such spaces are modelled on $\RR_+^k\!\times\RR^{p-k}$.

To exclude strange example such as the drop (\cf \autoref{fig:1}), we introduce the concept of boundary functions, \cf Ref.~\cite[Chapter 2]{Mel96}. 
\begin{figure}[ht]
	\centering
	\includegraphics[scale=0.9]{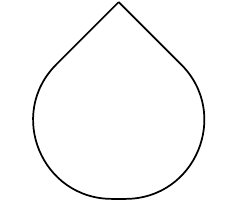}	
	\caption{The drop, which is not a manifold with corners (see below)}\label{fig:1}
\end{figure}

As before, $M$ will denote a supermanifold of dimension $(p,q)$ and $M_0$ will be the underlying manifold.

\begin{definition}
	A family of smooth functions $(\rho_1,\dotsc,\rho_r)$ is called \emph{independent at $o\in M_0$}, if the Jacobian $J_\lr(\rho_1,\dotsc,\rho_r)(o)$ at $o$ is of full rank.
	
	A family $(\rho_1,\dotsc,\rho_n)$ is called a family of \emph{boundary functions}, if the $\rho_i$ are independent at each point at which they vanish. This means for each subfamily $(\rho_{i_1},\dotsc,\rho_{i_k})$ and every $o\in M$:
	\begin{align*}
		\rho_{i_s}(o)=0\text{ for }s=1,\dotsc,k\Longrightarrow (\rho_{i_1},\dotsc,\rho_{i_k})\text{ is independent at }o.
	\end{align*}
\end{definition}

This implies that at most $p$ boundary functions can vanish simultaneously. Note that $n$ does not have to be smaller than $p$ (think of the case of an interval or a rectangle). Observe also that $(\rho_{i_1},\dotsc,\rho_{i_k})$ being independent at $o$ implies that this family can be supplemented to a coordinate system $(\rho_{i_1},\dotsc,\rho_{i_k},f_{k+1},\dotsc,f_p)$ on sufficiently small neighbourhoods of $o$. 

\begin{definition}
	A subset $N_0\subset M_0$ is called a \emph{manifold with corners}, if there exist boundary functions $\rho=(\rho_1,\dotsc,\rho_n)$ such that
	\begin{align*}
		N_0=&\,\Big\{o\in M_0\,\Big|\,\rho_i(o)>0,\ i=1,\dotsc,n\Big\}.
	\intertext{For each subfamily $\rho'=(\rho_{i_1},\dotsc,\rho_{i_k})$ we consider the set}
		H_0\defi&\left\{o\in M_0\ \middle|
				\begin{array}{ll}
					\rho_i(o)=0	& \text{for }\rho_i\in\rho',\\
					\rho_i(o)>0	& \text{for }\rho_i\notin\rho'
				\end{array}
			\right\}.
	\end{align*}
	
	Whenever $H_0$ is non-empty, it is called a \emph{boundary manifold} of $N_0$ of codimension $k$. We set $\rho_{H_0}\defi\rho'$ and denote by $B_0(M_0,\rho)=B_0(\rho)$ the collection of all boundary manifolds. Each boundary manifold of $N_0$ is a submanifold of $M_0$. Furthermore, the disjoint union of all boundary manifolds coincides with the (topological) boundary of $N_0$ in $M_0$.
	
	For later uses, we define for each boundary manifold $H_0\in B_0(\rho)$ a set of multi-indices
	\begin{align*}
		J_{H_0}\defi J_{H_0}^\rho\defi\Big\{j\in\multi(n,0)\,\Big|\,j_i=0\iff\rho_i\notin\rho_{H_0}\Big\}.
	\end{align*}
	
	Note that $\multi(n,0)\setminus\{0\}$ is the disjoint union of the $J_{H_0}$. Observe that for $j\in J_{H_0}$, the function $\rho^j=\rho_1^{j_1}\dotsm\rho_n^{j_n}$ is a monomial in the boundary functions $\rho_{H_0}$.
\end{definition}

\begin{examples}\mbox{}
	\begin{enumerate}[(i)]
		\item The drop $N$ depicted in \autoref{fig:1} is \emph{not} a manifold with corners. Indeed, suppose the contrary. Since there is only one codimension one boundary manifold, there can only be one boundary function. But then $N$ has only one boundary manifold, and is a manifold with boundary, contradiction. 
		\item
			Easy examples of manifolds with corners are displayed in \autoref{fig:2}.
			\begin{figure}[ht]
				\centering
				\includegraphics[scale=0.8]{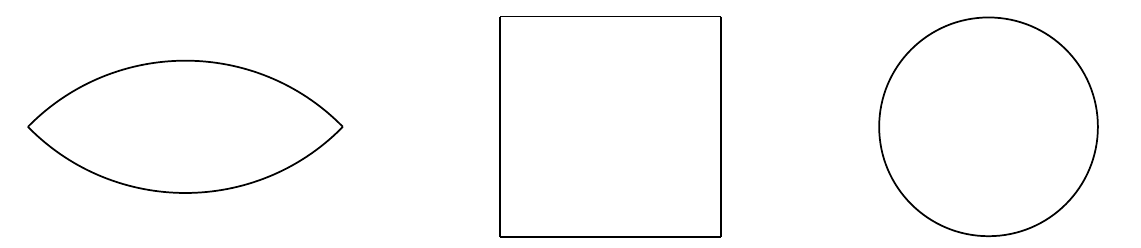}
				\caption{Manifolds with corners}\label{fig:2}
			\end{figure}
		\item
			One can also consider $N_0=\RR_+^k\!\times\RR^{p-k}$ (with $\RR_+\!=\ ]0,\infty[$) as a manifold with corners in $M_0=\RR^p$, with boundary functions $\rho=(\pr_1,\dotsc,\pr_k)$. These are the model spaces for manifolds with corners.
	\end{enumerate}
\end{examples}

We generalise this definition to the setting of supermanifolds.

\begin{definition}\label{def:5.10}\mbox{}
	\begin{enumerate}[(i)]
		\item
			A family of even superfunctions $\tau=(\tau_1,\dotsc,\tau_n)$ on $M$ is called a family of \emph{boundary superfunctions} if the family of underlying functions $\tau_0=(\tau_{1,0},\dotsc,\tau_{n,0})$ is a family of boundary functions.
		\item
			An open subspace $N\subset M$ is called a \emph{supermanifold with corners} if there exist boundary superfunctions $\tau=(\tau_1,\dotsc,\tau_n)$ such that $N_0\subset M_0$ is a manifold with corners \via the boundary functions $\tau_0$.
	\end{enumerate}
\end{definition}

\begin{remarks}
	Let $o\in M_0$ and $\tau'$ be a subfamily of $\tau$ such that $\tau_{i,0}(o)=0$ for each $\tau_i\in\tau'$. Similarly to the purely even case, one can augment the family $\tau'$ to a coordinate system $x=(u,\xi)=(\tau',\tilde x)=\bg((\tau',\tilde u),\xi)$ on a sufficiently small neighbourhood of $o$.

	If $N\subset M$ is given such that $N_0$ is a manifold with corners with boundary functions $\rho$, we are always able to turn $N$ into to a supermanifold with corners \via the boundary superfunctions $\tau=\gamma^*(\rho)$ for a retraction $\gamma$ on $M$. 
\end{remarks}

For the remainder of this section, $N$ will be a supermanifold with corners contained in the supermanifold $M$; moreover, $\tau=(\tau_1,\dotsc,\tau_n)$  will be boundary superfunctions, and $\rho=(\rho_1,\dotsc,\rho_n)$ will be boundary functions.

\begin{proposition}\label{prop:5.11}
	Let $\tau$ be given and $H_0\in B_0(\tau_0)$. Then there exist
	\begin{itemize}
		\item a supermanifold $H$ of dimension $(\dim H_0,q)$ with underlying space $H_0$,
		\item an immersion $\iota_H\colon H\to M$ over the inclusion $\lr(\iota_H)_0=\iota_{H_0}\colon H_0\hookrightarrow M_0$ such that $\iota_H^*(\tau_i)=0$ whenever $\tau_{i,0}\in\rho_{H_0}$.
	\end{itemize}
	The data $(H,\iota_H)$ are determined uniquely up to unique isomorphism by these conditions.
\end{proposition}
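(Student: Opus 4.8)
The plan is to construct $(H,\iota_H)$ locally by means of \autoref{prop:3.3}, and then to glue the local models; the gluing will be forced---and hence automatically coherent---by exactly the uniqueness we must establish. Write $\tau'=(\tau_{i_1},\dotsc,\tau_{i_k})$ for the subfamily of $\tau$ with $\tau_{i_s,0}\in\rho_{H_0}$, so that $H_0$ has codimension $k$ and we aim for $\dim H=(p-k,q)=(\dim H_0,q)$. For the \emph{local construction}, fix $o\in H_0$. By the remark following \autoref{def:5.10}, the subfamily $\tau'$ can be augmented to a coordinate system $x=(u,\xi)=(\tau',\tilde u,\xi)$ on a neighbourhood $U$ of $o$ in $M$; shrinking $U_0$, I may assume $\tau_{j,0}>0$ on $U_0$ for $\tau_{j,0}\notin\rho_{H_0}$, so that $H_0\cap U_0=\{\tau_{i_1,0}=\dotsb=\tau_{i_k,0}=0\}$ is the common zero locus of $\tau'_0$, a submanifold of $U_0$ of dimension $p-k$ on which the restrictions $\tilde u_0|_{H_0}$ form a classical coordinate system. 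Let $H^U$ be the superdomain with underlying manifold $H_0\cap U_0$ (which inherits Hausdorffness and second countability) and a coordinate system $(w,\theta)$ chosen with $w_0=\tilde u_0|_{H_0}$. By \autoref{prop:3.3}, the prescription $\iota^*(\tau_{i_s})\defi 0$, $\iota^*(\tilde u_j)\defi w_j$, $\iota^*(\xi_l)\defi\theta_l$ defines a unique morphism $\iota\colon H^U\to U$; its underlying map carries a point to the one with coordinates $(0,w_0)$, i.e. it is the inclusion, and $(\iota^*(\tilde u),\iota^*(\xi))=(w,\theta)$ is a coordinate system, so $\iota$ is an immersion in the sense preceding \autoref{la:4.3} with $\iota^*(\tau_i)=0$ whenever $\tau_{i,0}\in\rho_{H_0}$.

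The conceptual heart is a rigidity statement that simultaneously yields local uniqueness. Suppose $(H,\iota_H)$ is \emph{any} datum satisfying the stated conditions, and restrict to a chart $U$ as above. Since $\iota_H$ is an immersion, some subfamily of $\bigl(\iota_H^*(x_1),\dotsc,\iota_H^*(x_{p+q})\bigr)$ is a coordinate system on $H|_{U_0\cap H_0}$, necessarily consisting of $p-k$ even and $q$ odd members because $\dim H=(p-k,q)$ and $\iota_H^*$ preserves parity. The even members $\iota_H^*(\tau_{i_s})=0$ cannot occur in a coordinate system (their underlying differentials vanish), so this subfamily avoids the $k$ indices of $\tau'$; being of size $(p+q)-k$ it is therefore forced to be exactly $(\iota_H^*(\tilde u),\iota_H^*(\xi))$. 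Thus every admissible $\iota_H$ makes $(\iota_H^*(\tilde u),\iota_H^*(\xi))$ a coordinate system on $H$ over $U$.

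Local uniqueness now follows at once. Given two admissible data $(H,\iota_H)$ and $(H',\iota_{H'})$, matching the coordinate systems $(\iota_H^*(\tilde u),\iota_H^*(\xi))$ and $(\iota_{H'}^*(\tilde u),\iota_{H'}^*(\xi))$ defines, by \autoref{prop:3.3}, a unique isomorphism $\psi$ over the identity of $H_0\cap U_0$; since $\iota_{H'}\circ\psi$ and $\iota_H$ agree on all of $x$ (both send $\tau'$ to $0$ and agree on $\tilde u$ and $\xi$), the uniqueness clause of \autoref{prop:3.3} gives $\iota_{H'}\circ\psi=\iota_H$, and $\psi$ is itself unique.

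Finally I would glue. Covering $H_0$ by charts $U^\alpha$ as above yields local data $(H^\alpha,\iota^\alpha)$; on each overlap the preceding paragraph supplies a unique isomorphism $\psi_{\alpha\beta}$ over the identity with $\iota^\beta\circ\psi_{\alpha\beta}=\iota^\alpha$. Uniqueness forces the cocycle identity $\psi_{\beta\gamma}\circ\psi_{\alpha\beta}=\psi_{\alpha\gamma}$ on triple overlaps, so the $H^\alpha$ glue to a supermanifold $H$ of dimension $(\dim H_0,q)$ and the compatible $\iota^\alpha$ glue to a global immersion $\iota_H\colon H\to M$ with the required vanishing; patching the local uniqueness isomorphisms by the same cocycle argument gives global uniqueness up to unique isomorphism. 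I expect the main obstacle to be precisely the rigidity step of the second paragraph---showing that the condition $\iota_H^*(\tau')=0$ leaves no freedom in the pulled-back coordinate system---since everything else (existence \via \autoref{prop:3.3} and the coherence of the gluing) then follows formally from uniqueness.
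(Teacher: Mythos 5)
Your argument is correct. The paper itself gives no proof here beyond the citation ``This follows from \cite[Proposition~3.2.6]{Le80}''---the statement is exactly the standard fact that setting a regular family of even coordinates to zero determines a sub-supermanifold uniquely---so what you have done is supply a self-contained proof of the quoted result. Your route is the natural one and all the steps check out: the local existence via \autoref{prop:3.3} (with the underlying-map condition verified because the reduced values $(0,\tilde u_0(h))$ lie in the chart's range), the rigidity observation that $\iota_H^*(\tau')=0$ forces the pulled-back coordinate system to be exactly $(\iota_H^*(\tilde u),\iota_H^*(\xi))$ (a zero function cannot be a coordinate, and a parity count leaves no other choice), and the cocycle argument in which the transition isomorphisms are characterised chart-independently by $\iota_{H'}\circ\psi=\iota_H$, so that uniqueness forces coherence. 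The only point worth tightening is in the rigidity step: the definition of immersion only guarantees that the pulled-back subfamily is a coordinate system on a neighbourhood of each point of $H_0\cap U_0$, so you should note that this local statement, together with injectivity of $\tilde u_0|_{H_0}$ on a chart domain, upgrades to a coordinate system on all of $H|_{U_0\cap H_0}$. This does not affect the validity of the argument; it is essentially the content of the result the authors cite, written out in full.
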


\begin{proof}This follows from \cite[Propositions~3.2.6]{Le80}.\end{proof}

Concretely speaking, the condition $\iota_H^*(\tau_i)=0$ means that $H$ is the supermanifold obtained by setting the boundary coordinates to zero.

\begin{definition}
	We call the supermanifolds $H$ from \autoref{prop:5.11} the \emph{boundary supermanifolds} of $N$ corresponding to $\tau$. The set of all such $H$ is denoted by $B(M,\tau)=B(\tau)$. We define abbreviations $\tau_H\defi\lr(\tau_i)_{\tau_{i,0}\in\tau_{H_0}}\!$ and $J_H^\tau\defi J_H^{\tau_0}\defi J_H\defi J_{H_0}$.
\end{definition}

We will need to integrate Berezin densities on $M$ along the boundary supermanifolds $H$. For that purpose, we define on $H$ canonical retractions, as well as the restriction to $H$ of Berezin densities on $M$.

\begin{lemma}\label{la:5.12}
	Let $\gamma$ be a retraction on $M$ and $H\in B\bg(\gamma^*(\rho))$. Then there exists a unique retraction $\gamma_H$ on $H$ such that the following diagram commutes:
	\begin{align}\label{eq:17}
		\begin{split}\begin{xy}
		\xymatrix{
			H\ar[r]^{\iota_H}\ar[d]_{\gamma_H}		& M\ar[d]^\gamma\\
			H_0\ar@{^{(}->}[r]_{\iota_{H_0}}	& M_0
		}
		\end{xy}\end{split}
	\end{align}
\end{lemma}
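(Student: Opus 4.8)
The plan is to construct $\gamma_H$ by corestricting the composite morphism $\gamma\circ\iota_H\colon H\to M_0$ to the submanifold $H_0$, and to extract both uniqueness and the retraction property from the universal property of the embedding $\iota_{H_0}$. First I would record two underlying-level facts. Since $\gamma\circ j_M=\id_{M_0}$ forces $\gamma_0=\id_{M_0}$, and $\iota_{H,0}=\iota_{H_0}$ by \autoref{prop:5.11}, the underlying map of $\gamma\circ\iota_H$ equals $\gamma_0\circ\iota_{H,0}=\iota_{H_0}$; in particular its image lies in $H_0$. Second, by naturality of the canonical embedding one has $\iota_H\circ j_H=j_M\circ\iota_{H_0}$ (this is the identity $j_H^*\circ\iota_H^*=\iota_{H_0}^*\circ j_M^*$), which I will use at the end.

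Next I would show that $\gamma\circ\iota_H$ factors through $\iota_{H_0}\colon H_0\hookrightarrow M_0$. The vanishing ideal of $H_0$ in $M_0$ is locally generated by those boundary functions $\rho_i$ with $\rho_i\in\rho_{H_0}$, and \autoref{prop:5.11} gives $(\gamma\circ\iota_H)^*(\rho_i)=\iota_H^*(\gamma^*(\rho_i))=\iota_H^*(\tau_i)=0$ for exactly these indices. Hence $(\gamma\circ\iota_H)^*$ annihilates the vanishing ideal of $H_0$, so the morphism factors uniquely as $\gamma\circ\iota_H=\iota_{H_0}\circ\gamma_H$ for a unique $\gamma_H\colon H\to H_0$. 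This one factorization statement yields both the existence and the uniqueness of a $\gamma_H$ making Diagram~\eqref{eq:17} commute, since $\iota_{H_0}$ is a monomorphism (left-cancellable), so two candidates agreeing after composition with $\iota_{H_0}$ must coincide. Concretely, I would realize this in adapted coordinates $x=(u,\xi)$ with $u=\gamma^*(u_0)$ and $u_j=\tau_{i_j}=\gamma^*(\rho_{i_j})$ at the boundary indices: then $\iota_H^*(u_j)=0$ for those $j$, the family $\iota_H^*(\tilde x)$ is a coordinate system on $H$, and $\gamma_H$ is simply the retraction associated with the even part $\iota_H^*(\tilde u)$. Its defining property $\gamma_H^*(\iota_{H_0}^*(u_{j,0}))=\iota_H^*(u_j)$ is precisely the commutativity of Diagram~\eqref{eq:17} checked on the coordinate system $u_0$ of $M_0$ through \autoref{prop:3.3}.

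Finally I would verify that $\gamma_H$ is genuinely a retraction, i.e.\ $\gamma_H\circ j_H=\id_{H_0}$. Composing with the monomorphism $\iota_{H_0}$ and invoking $\iota_H\circ j_H=j_M\circ\iota_{H_0}$ together with $\gamma\circ j_M=\id_{M_0}$ gives $\iota_{H_0}\circ\gamma_H\circ j_H=\gamma\circ\iota_H\circ j_H=\gamma\circ j_M\circ\iota_{H_0}=\iota_{H_0}$, whence $\gamma_H\circ j_H=\id_{H_0}$ by cancellation. The step that needs genuine care—the main obstacle—is the factorization of $\gamma\circ\iota_H$ through the embedded submanifold $H_0$: one must justify that a morphism from the supermanifold $H$ into the classical manifold $M_0$, whose reduction maps into $H_0$ and whose pullback kills the vanishing ideal, corestricts uniquely to $H_0$. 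I expect this to reduce to the local generation of that ideal by the $\rho_i$ and a direct application of \autoref{prop:3.3}, so that the abstract factorization is underpinned by the explicit coordinate construction above.
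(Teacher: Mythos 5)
Your proposal is correct and follows essentially the same route as the paper: existence via coordinates adapted to the boundary (completing $\rho_{H_0}$ to $u_0=(\rho_{H_0},\tilde u_0)$, setting $\gamma_H^*$ on $\iota_{H_0}^*(\tilde u_0)$ to be $\iota_H^*(\gamma^*(\tilde u_0))$ and invoking \autoref{prop:3.3}), with uniqueness from $\iota_{H_0}$ being a monomorphism. Your additional explicit check that $\gamma_H\circ j_H=\id_{H_0}$, via the naturality square $\iota_H\circ j_H=j_M\circ\iota_{H_0}$, is a point the paper leaves implicit and is a welcome completion.
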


\begin{proof}
	 The uniqueness of $\gamma_H$ follows directly from $\iota_{H_0}\circ\gamma_H=\gamma\circ\iota_H$, since $\iota_{H_0}$ is a monomorphism. 
	 
	 For the existence, we choose $o\in H_0$ and complete the boundary functions $\rho_{H_0}$ to a coordinate system $u_0=(\rho_{H_0},\tilde u_0)$ on a sufficiently small neighbourhood $U_0\subset M_0$ of $o$. Since $\iota_{H_0}^*(\rho_{H_0})=\rho_{H_0}|_{H_0}=0$, it is clear that $\iota_{H_0}^*(\tilde u_0)=\tilde u_0|_{U_0\cap H_0}$ is a coordinate system on $U_0\cap H_0$. We set
	 \begin{align*}
		\gamma_H^*\lr(\tilde u_0|_{U_0\cap H_0})\defi\iota_H^*\bg(\gamma^*(\tilde u_0)).
	 \end{align*}
	 
	By \autoref{prop:3.3}, this defines a morphism $H|_{U_0\cap H_0}\to U_0\cap H_0$ such that the diagram on $H|_{U_0\cap H_0}$ corresponding to \eqref{eq:17} commutes. By uniqueness, this definition does not depend on the chosen coordinate system. Therefore, we can glue these morphisms to $\gamma_H\colon H\to H_0$. 
\end{proof}

\begin{proposition}\label{cor:5.13}
	Let $H_0\in B_0(\rho)$ be a boundary manifold and suppose that there is another family of boundary functions $\rho'$ which also determines $H_0$. Then the families of boundary superfunctions given by $\gamma^*(\rho)$ and $\gamma^*(\rho')$ determine the same supermanifold structure $H$ over $H_0$ with immersion $\iota_H$.
\end{proposition}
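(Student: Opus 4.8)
The plan is to leverage the uniqueness clause of \autoref{prop:5.11}: a boundary supermanifold is pinned down, among immersions of the correct dimension over $\iota_{H_0}$, by the single requirement that the pullbacks of the relevant boundary superfunctions vanish. So I would write $(H,\iota_H)$ and $(H',\iota_{H'})$ for the boundary supermanifolds that \autoref{prop:5.11} attaches to $\gamma^*(\rho)$ and $\gamma^*(\rho')$, and then argue that $(H,\iota_H)$ \emph{already} fulfils the characterising conditions of $(H',\iota_{H'})$; uniqueness then forces a unique isomorphism $(H,\iota_H)\cong(H',\iota_{H'})$ over $H_0$, which is the assertion. Before doing so one should note that $\gamma^*(\rho')$ really is a family of boundary superfunctions with $H_0\in B_0(\rho')$: since $\gamma$ is a retraction we have $j_M^*\circ\gamma^*=\id$, so the underlying functions of $\gamma^*(\rho')$ are precisely $\rho'$, which by hypothesis are boundary functions determining $H_0$.

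Both $(H,\iota_H)$ and $(H',\iota_{H'})$ have the same underlying space $H_0$, the same dimension $(\dim H_0,q)$, and are immersions over $\iota_{H_0}$, as these features do not refer to the chosen boundary superfunctions. The only nontrivial point is therefore to verify that $\iota_H^*\bigl(\gamma^*(\rho'_j)\bigr)=0$ for each $\rho'_j$ in the subfamily $\rho'_{H_0}$ of $\rho'$ that cuts out $H_0$. This is local on $H_0$, so I would fix $o\in H_0$; since $\rho_{H_0}$ is independent at $o$ it completes to a coordinate system, and near $o$ the boundary manifold $H_0$ is exactly the transversal zero locus $\{\rho_{i_1}=\dotsm=\rho_{i_k}=0\}$ of $\rho_{H_0}$. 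As $\rho'_j$ vanishes on $H_0$, Hadamard's lemma supplies smooth $a_{js}$ near $o$ with $\rho'_j=\sum_s a_{js}\rho_{i_s}$, the sum running over $\rho_{i_s}\in\rho_{H_0}$. Applying the algebra homomorphism $\gamma^*$ followed by $\iota_H^*$, and using $\iota_H^*\bigl(\gamma^*(\rho_{i_s})\bigr)=0$, which holds by the defining property of $(H,\iota_H)$, one gets
\begin{align*}
	\iota_H^*\bigl(\gamma^*(\rho'_j)\bigr)=\sum_s\iota_H^*\bigl(\gamma^*(a_{js})\bigr)\,\iota_H^*\bigl(\gamma^*(\rho_{i_s})\bigr)=0
\end{align*}
near $o$, hence on all of $H_0$.

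With this vanishing in hand, $(H,\iota_H)$ satisfies every condition characterising $(H',\iota_{H'})$, and the uniqueness assertion of \autoref{prop:5.11} delivers the required canonical identification; note that, by symmetry, only this one direction of the uniqueness is needed. The step deserving the most care is the Hadamard reduction $\rho'_j=\sum_s a_{js}\rho_{i_s}$: it relies on $H_0$ being, locally, the honest transversal zero set of the subfamily $\rho_{H_0}$, which in particular forces the codimensions to agree, $\codim H_0=|\rho_{H_0}|=|\rho'_{H_0}|$. This is exactly what independence of boundary functions guarantees, and once it is granted the remaining manipulations are purely formal.
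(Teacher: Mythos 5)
Your proposal is correct, and its first step coincides with the paper's: both reduce the claim, via the uniqueness clause of \autoref{prop:5.11}, to showing $\iota_H^*\bigl(\gamma^*(\rho'_{H_0})\bigr)=0$ for the boundary supermanifold $H$ attached to $\gamma^*(\rho)$. Where you diverge is in how this vanishing is established. The paper invokes \autoref{la:5.12}: the boundary retraction $\gamma_H$ makes the square $\gamma\circ\iota_H=\iota_{H_0}\circ\gamma_H$ commute, so $\iota_H^*\circ\gamma^*=\gamma_H^*\circ\iota_{H_0}^*$, and since $\iota_{H_0}^*(\rho'_{H_0})=\rho'_{H_0}|_{H_0}=0$ classically, the conclusion is immediate---a one-line argument, but one that presupposes the construction of $\gamma_H$. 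You instead argue by hand: locally $H_0$ is the transversal zero locus of $\rho_{H_0}$, so Hadamard's lemma factors each $\rho'_j\in\rho'_{H_0}$ as $\sum_s a_{js}\rho_{i_s}$, and applying the algebra homomorphism $\iota_H^*\circ\gamma^*$ kills every term because $\iota_H^*\bigl(\gamma^*(\rho_{i_s})\bigr)=0$ by the defining property of $H$; a sheaf-theoretic local-to-global step finishes it. Your route is more elementary and self-contained (it never uses $\gamma_H$), at the cost of a local factorization and gluing; the paper's route is shorter because it reuses machinery already built for the integration theory. Your preliminary observations---that $j_M^*\circ\gamma^*=\id$ makes $\gamma^*(\rho')$ a family of boundary superfunctions over $\rho'$, and that only one direction of the uniqueness comparison is needed---are correct and worth making explicit. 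Both arguments are sound.
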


\begin{proof}
	Let $H\in B\bg(\gamma^*(\rho))$ be the supermanifold over $H_0$ associated with $\gamma^*(\rho)$. By the uniqueness in \autoref{prop:5.11}, it suffices to show $\iota_H^*\bg(\gamma^*(\rho'_{H_0}))=0$. Since $\iota_{H_0}^*(\rho'_{H_0})=\rho'_{H_0}|_{H_0}=0$, \autoref{la:5.12} proves the claim.
\end{proof}

\autoref{cor:5.13} shows that Diagram \eqref{eq:17} uniquely determines the supermanifold $H$ for a given retraction $\gamma$ on $M$. We have already made use of this in the statement of Stokes's theorem.

\begin{definition}\label{def:5.15}
	Let $H\in B(\tau)$ be a boundary supermanifold. We define a restriction map
	\begin{align*}
		|\Ber| M\longrightarrow|\Ber| H,\ \omega\longmapsto\omega|_{H,\tau}
	\end{align*}
	as follows. For $o\in H_0$ supplement $\tau_H$ to a coordinate system $x=(\tau_H,\tilde x)$ on a neighbourhood $U$ and write $\omega|_U=f|Dx|$. Since $\iota_H^*(\tau_H)=0$, the family $\iota_H^*(\tilde x)$ is a coordinate system on $U\cap H\defi H|_{U_0\cap H_0}$. Therefore, we may define
	\begin{align*}
		\omega|_{U\cap H,\tau}\defi\iota_H^*(f)|D\iota_H^*(\tilde x)|.
	\end{align*}
	
	This definition depends on $\tau$, but it is independent of the choice of the $\tilde x$ (see below). Thus, this defines a morphism of sheaves $|\Ber|_M\to\iota_{H_0*}|\Ber|_H$. By definition, $(f\omega)|_{H,\tau}=\iota_H^*(f)\omega|_{H,\tau}$, so it is in fact a morphism of $\OO_M$-modules.
\end{definition}

\begin{lemma}
	The construction in \autoref{def:5.15} does not depend on the choice of $x$.
\end{lemma}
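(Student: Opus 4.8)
The plan is to show that the Berezin density $\omega|_{U\cap H,\tau}$ constructed in \autoref{def:5.15} is unchanged if the supplementary coordinates $\tilde x$ are replaced by another choice $\tilde y$, so that $x=(\tau_H,\tilde x)$ and $y=(\tau_H,\tilde y)$ share the \emph{same} boundary superfunctions $\tau_H$ in their even part. Writing $\omega|_U=f|Dx|=g|Dy|$, the transformation law for densities gives $g=f\,\Da x/y$, so the two candidate restrictions are $\iota_H^*(f)|D\iota_H^*(\tilde x)|$ and $\iota_H^*(g)|D\iota_H^*(\tilde y)|=\iota_H^*(f)\,\iota_H^*\bigl(\Da x/y\bigr)|D\iota_H^*(\tilde y)|$. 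Thus the whole lemma reduces to the single identity
\begin{align*}
	\iota_H^*\bigl(\Da x/y\bigr)\,|D\iota_H^*(\tilde y)|=|D\iota_H^*(\tilde x)|,
\end{align*}
i.e.\ to showing that $\iota_H^*\bigl(\Da x/y\bigr)$ is precisely the Berezinian density of the coordinate change $\iota_H^*(\tilde x)\rightsquigarrow\iota_H^*(\tilde y)$ on $H$.

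First I would observe that the $k$ boundary coordinates $\tau_H=(\tau_{i_1},\dotsc,\tau_{i_k})$ are literally common to $x$ and $y$, so the Jacobian matrix $\der x/y$ has a block-triangular structure: the rows corresponding to $\tau_H$ differentiated against the remaining $\tilde y$-directions form a block that becomes trivial after applying $\iota_H^*$. Concretely, since $\iota_H^*(\tau_H)=0$, the same annihilation argument used in the proof of \autoref{prop:4.6} applies coordinate-by-coordinate: for each boundary coordinate $\tau_{i_m}$ and each non-boundary coordinate $y_\ell$ one has $\iota_H^*\bigl(\der{\tau_{i_m}}/{y_\ell}\bigr)=0$, because differentiating the identity $\iota_H^*(\tau_{i_m})=0$ along $\iota_H^*(y_\ell)$ and using the chain rule leaves only the Kronecker contribution from the boundary directions. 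This forces the pulled-back Jacobian into the block-triangular form with an identity block along the $\tau_H$-rows. The second step is then to apply the multiplicativity of the Berezinian under such triangular blocks---exactly \Cref{eq:6} from the proof of \autoref{prop:4.6}, iterated over the $k$ boundary coordinates---to conclude
\begin{align*}
	\iota_H^*\bigl(\D x/y\bigr)=\D{\iota_H^*(\tilde x)}/{\iota_H^*(\tilde y)}.
\end{align*}
Passing from Berezinians to densities only introduces the sign $\sgn j_H^*(\det R)$, which matches on both sides because the even boundary coordinates are annihilated and the relative orientation is preserved by $\iota_H$; this upgrades the equality to the density version $\iota_H^*\bigl(\Da x/y\bigr)=\Da{\iota_H^*(\tilde x)}/{\iota_H^*(\tilde y)}$.

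The main obstacle, and the one point that genuinely requires care rather than a routine recomputation, is the vanishing $\iota_H^*\bigl(\der{\tau_{i_m}}/{y_\ell}\bigr)=0$ for the off-diagonal block when $k>1$: one must verify that the several boundary coordinates interact correctly under the supergeometric chain rule and that the induced coordinate system $\iota_H^*(\tilde y)$ is genuinely a coordinate system on $H$ (which is exactly what \autoref{prop:5.11} and the construction of $\iota_H$ guarantee). Once this block-triangularity is established, the Berezinian computation is a direct transcription of \Cref{eq:6} and the argument in \autoref{prop:4.6}, so I would keep that portion brief and simply cite the earlier proof. I expect the entire proof to be short, amounting to the reduction above plus the triangular-Berezinian identity, with the sign bookkeeping for the density (rather than the form) being the only additional nuisance.
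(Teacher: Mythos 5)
Your proposal is correct and follows essentially the same route as the paper: reduce the lemma to the identity $\iota_H^*\lr(\Da x/y)=\Da{\iota_H^*(\tilde x)}/{\iota_H^*(\tilde y)}$, establish the block-triangular form of the pulled-back Jacobian, and apply \Cref{eq:6}. The one point you flag as the main obstacle is in fact trivial: since $\tau_H$ is literally common to both coordinate systems, $\der{y_i}/{x_l}=\delta_{il}$ holds exactly for $y_i\in\tau_H$, before any pullback, so the chain-rule argument from \autoref{prop:4.6} is only needed to identify the lower-right block as $\der{\iota_H^*(\tilde y)}/{\iota_H^*(\tilde x)}$, which is how the paper proceeds.
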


\begin{proof}
	This proof is similar to that of \autoref{prop:4.6}. Let $y=(\tau_H,\tilde y)$ be another such coordinate system and let $k$ be the codimension of $H_0$. Then for $x_l\in\tilde x$ we obtain by the chain rule
	\begin{align*}
		\iota_H^*\lr(\der {y_i}/{x_l})
			=\der {\iota_H^*(y_i)}/{\iota_H^*(x_l)}.
	\end{align*}
	
	Furthermore, $\der {y_i}/{x_l}=\delta_{il}$ for $y_i\in\tau_H$, hence
	\begin{align*}
		\iota_H^*\lr(\Da y/x)
			&=\pm\Ber\left(\begin{smallmatrix}
						\iota_H^*\lr(\der {y_1}/{x_1})		&\cdots&	\iota_H^*\lr(\der {y_{p+q}}/{x_1})\\
						\vdots									&\ddots&	\vdots\\
						\iota_H^*\lr(\der {y_1}/{x_{p+q}})	&\cdots&	\iota_H^*\lr(\der {y_{p+q}}/{x_{p+q}})
					\end{smallmatrix}\right)
			=\pm\Ber\begin{pmatrix}
						1_{k\times k}	& *													\\
						0						& \der{\iota_H^*(\tilde y)}/{\iota_H^*(\tilde x)}	\\
					\end{pmatrix}\\
			&=\Da{\iota_H^*(y)}/{\iota_H^*(x)},
	\end{align*}
	using \Cref{eq:6}. Now suppose a Berezin density $f|Dy|$. We finish with
	\begin{align*}
		\big(f|Dy|\big)\big|_{U\cap H,\tau}
			&=\iota_H^*\lr(f \Da y/x) |D\iota_H^*(\tilde x)|
			=\iota_H^*(f)\Da{\iota_H^*(y)}/{\iota_H^*(x)}|D\iota_H^*(\tilde x)|\\
			&=\iota_H^*(f)|D\iota_H^*(\tilde y)|.\qedhere
	\end{align*}
\end{proof}

\begin{remarks}\label{rk:5.16}\mbox{}
	\begin{enumerate}[(i)]
		\item
			The restriction can also be defined for Berezin forms; in this case, one has to fix an ordering on the family of boundary superfunctions $\tau$. 
			
			The restriction of Berezin forms and the pullback of integral forms are related as follows: Suppose $x=(u,\xi)$ is a coordinate system on a superdomain $\Omega$. Let $(H,\iota_H)$ be the boundary data given by the boundary function $u_1$. Then for any Berezin form $\omega\in\Ber\Omega$ we have
			\begin{align*}
				\omega|_{H,u_1}=\iota_H^*\bg((-1)^{|Dx|}\omega\otimes\der/{u_1}).
			\end{align*}
		\item\label{rk:5.16.2}
			The restriction of Berezin densities is compatible with the notion of Riemannian measure; we elaborate this in the even case. Let $M_0$ carry a Riemannian metric $g$. The induced Riemannian density $\omega_g$ locally has the form 
			\begin{align*}
				\omega_g=\sqrt{|\det(g_{kj}^{u_0})|}\,|du_0|,
			\end{align*}
			for a coordinate system $u_0$, where $g_{kj}^{u_0}(o)=g_o\big(\der/{u_{k,0}}|_o,\der/{u_{j,0}}|_o\big)$ for $k,j=1,\dotsc,p$ and $o\in M_0$.
			
			Consider $N_0\subset M_0$ to be a manifold with corners and enumerate the boundary manifolds of codimension $1$ by $H^1_0,\dotsc,H^n_0$. The metric on $M_0$ determines on each $H^i_0$ uniquely the inner normal derivative $n_i$.
			
			We assume $g$ to be such that the $H^i_0$ intersect each other orthogonally. This means that the corresponding $n_i$ are orthogonal to each other on the intersections of the $H^i_0$, which are just the boundary manifolds. Thus, we can choose boundary functions $\rho=(\rho_1,\dotsc,\rho_n)$ which satisfy $n_i(\rho_j)=\delta_{ij}$ on $H_0^i$.
			
			Let $H_0$ be any boundary manifold and $o\in H_0$. Choose a coordinate system $u_0=(\rho_{H_0},\tilde u_0)$ on a neighbourhood $U_0$ of $o$, which satisfies
			\begin{align*}
				g_{o'}\big(\der/{u_{k,0}}|_{o'},\der/{u_{j,0}}|_{o'}\big)=\delta_{kj}\quad\text{for all }o'\in H_0\cap U_0.
			\end{align*}
			Then $g_{kj}^{u_0}\big|_{H_0\cap U_0}=1$, and hence $\omega_g|_{H_0,\rho}=\big|d\tilde u_0|_{H_0\cap U_0}\big|$.
			
			On the other hand, $g$ induces a metric $g_{H_0}$ on $H_0$, which gives the canonical density
			$\omega_{g_{H_0}}=\big| d\tilde u_0|_{H_0\cap U_0}\big|$ on $H_0\cap U_0$. Thus, 
			\begin{align*}
				\omega_g|_{H_0,\rho}=\omega_{g_{H_0}}.
			\end{align*}
	\end{enumerate}
\end{remarks}

\begin{lemma}\label{la:5.16}
	Let $\gamma$ be a retraction. The restriction to a boundary supermanifold is compatible with $\gamma_!$, in the following sense: for $H\in B\bg(\gamma^*(\rho))$, 
	\begin{align*}
		{\gamma_H}_!\lr(\omega|_{H,\gamma^*(\rho)})=(-1)^{s(\dim N)+s(\dim H)}(\gamma_!(\omega))|_{H_0,\rho}
	\end{align*}
\end{lemma}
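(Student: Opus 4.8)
The plan is to prove the identity by a local computation in a single, carefully chosen coordinate system. Since $\gamma_!$ (\autoref{def:3.6}), the boundary restriction of Berezin densities (\autoref{def:5.15}), and the purely even instance of that restriction applied to the density $\gamma_!(\omega)$ on $M_0$ are all globally well-defined, it suffices to check the equality of the two sides on a neighbourhood of an arbitrary point $o\in H_0$; the local identities then glue.

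First I would fix adapted coordinates. Let $k=\codim H_0$ and complete the boundary functions $\rho_{H_0}$ to a classical coordinate system $u_0=(\rho_{H_0},\tilde u_0)$ on a neighbourhood $U_0$ of $o$, so that $H_0\cap U_0=\{\rho_{H_0}=0\}$ and $\iota_{H_0}^*(\tilde u_0)$ is a coordinate system on $H_0\cap U_0$. Put $u\defi\gamma^*(u_0)$, so that $\tau_H=\gamma^*(\rho_{H_0})$ are precisely the leading $k$ even components of $u$, and supplement $u$ to a coordinate system $x=(u,\xi)=(\tau_H,\tilde x)$ on $U$, where $\tilde x=(\gamma^*(\tilde u_0),\xi)$. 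Then $\gamma$ is associated with $x$. Writing $\omega=f|Dx|$ with $f=\sum_\nu\gamma^*(f_\nu)\xi^\nu$ as in \eqref{eq:1}, this one coordinate system computes both sides.

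For the right hand side, \autoref{def:3.6} gives $\gamma_!(\omega)=(-1)^{s(p,q)}f_{\lr(1,\dotsc,1)}|du_0|$, and restricting this density to $H_0$ via $\rho$ (the purely even case of \autoref{def:5.15}, which amounts to setting $\rho_{H_0}=0$) yields $(\gamma_!(\omega))|_{H_0,\rho}=(-1)^{s(p,q)}\iota_{H_0}^*(f_{\lr(1,\dotsc,1)})\,|d\iota_{H_0}^*(\tilde u_0)|$. For the left hand side, \autoref{def:5.15} gives $\omega|_{H,\gamma^*(\rho)}=\iota_H^*(f)|D\iota_H^*(\tilde x)|$. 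The crucial input is the commutativity of \eqref{eq:17}, which on functions reads $\iota_H^*\circ\gamma^*=\gamma_H^*\circ\iota_{H_0}^*$; applying it termwise to the expansion of $f$ gives
\begin{align*}
	\iota_H^*(f)=\sum_\nu\gamma_H^*(\iota_{H_0}^*(f_\nu))\,\iota_H^*(\xi)^\nu.
\end{align*}
Since the even part of $\iota_H^*(\tilde x)$ is $\iota_H^*(\gamma^*(\tilde u_0))=\gamma_H^*(\iota_{H_0}^*(\tilde u_0))$, the retraction $\gamma_H$ is exactly the one associated with $\iota_H^*(\tilde x)$, so this is the decomposition \eqref{eq:1} relevant to ${\gamma_H}_!$. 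As $\iota_H^*$ leaves the odd coordinates $\xi$ untouched, the top-degree coefficient of $\iota_H^*(f)$ is $\iota_{H_0}^*(f_{\lr(1,\dotsc,1)})$, and \autoref{def:3.6} on the $(p-k,q)$-dimensional $H$ gives ${\gamma_H}_!(\omega|_{H,\gamma^*(\rho)})=(-1)^{s(p-k,q)}\iota_{H_0}^*(f_{\lr(1,\dotsc,1)})\,|d\iota_{H_0}^*(\tilde u_0)|$.

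Comparing the two expressions, the lemma follows from $(-1)^{s(\dim N)+s(\dim H)}(-1)^{s(p,q)}=(-1)^{2s(p,q)+s(p-k,q)}=(-1)^{s(p-k,q)}$, using $\dim N=(p,q)$ and $\dim H=(p-k,q)$. The step requiring the most care is the coordinate setup in the first paragraph: one must arrange a single $\gamma$-adapted system whose leading even coordinates are exactly $\tau_H$, so that the \emph{super}-restriction to $H$ and the \emph{classical} restriction of $\gamma_!(\omega)$ to $H_0$ both extract the same component $\iota_{H_0}^*(f_{\lr(1,\dotsc,1)})$ against the same Lebesgue density. The commutativity of \eqref{eq:17} is precisely what guarantees that the two restriction procedures act coherently on the coefficient functions $f_\nu$; everything else is sign bookkeeping.
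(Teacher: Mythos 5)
Your proof is correct and follows essentially the same route as the paper's: both work in a local $\gamma$-adapted coordinate system $x=(\gamma^*(\rho_{H_0}),\tilde x)$, use the commutativity of Diagram~\eqref{eq:17} to rewrite $\iota_H^*\circ\gamma^*$ as $\gamma_H^*\circ\iota_{H_0}^*$ on the coefficients $f_\nu$, and then match top-degree coefficients and signs. Your write-up merely spells out the sign bookkeeping and the identification of $\gamma_H$ as the retraction associated with $\iota_H^*(\tilde x)$ a bit more explicitly than the paper does.
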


\begin{proof}
	We complete $\rho_{H_0}$ locally to a coordinate system $u_0=(\rho_{H_0},\tilde u_0)$ and choose $x=(u,\xi)=(\gamma^*(\rho_{H_0}),\tilde x)$ to be a coordinate system with which $\gamma$ is associated. We write $\omega=f|Dx|$ with $f=\sum_\nu\gamma^*(f_\nu)\xi^\nu$. Then
	\begin{align*}
		{\gamma_H}_!\lr(\omega|_{H,\gamma^*(\rho)})
			&={\gamma_H}_!\Big(\sum_\nu\iota_H^*\circ\gamma^*(f_\nu)\iota_H^*(\xi)^\nu|D\iota^*(\tilde x)|\Big)\\
			&={\gamma_H}_!\Big(\sum_\nu\gamma_H^*\circ\iota_{H_0}^*(f_\nu)\iota_H^*(\xi)^\nu|D\iota^*(\tilde x)|\Big)\\
			&=(-1)^{s(\dim H)}\,\iota_{H_0}^*\bg(f_\lr(1,\dotsc,1))|d\iota_{H_0}^*(\tilde u_0)|\\
			&=(-1)^{s(\dim H)}\big(f_\lr(1,\dotsc,1)|du_0|\big)\big|_{H_0,\rho}\\
			&=(-1)^{s(\dim N)+s(\dim H)}(\gamma_!(\omega))|_{H_0,\rho}.\qedhere
	\end{align*}
\end{proof}

\begin{lemma}\label{la:5.17}
	For the boundary superfunctions $\tau$ there exists a family of derivations $D=(D_1,\dotsc,D_n)$ with the following properties:
	\begin{itemize}
		\item
			Each $D_i$ is defined on a neighbourhood $U^D_i$ of $H_0$, where $H_0$ is the boundary manifold of codimension $1$, which is given by $\tau_{i,0}$.
		\item
			$D_i(\tau_j)=\delta_{ij}$ on $U^D_i\cap U^D_j$.
	\end{itemize}
\end{lemma}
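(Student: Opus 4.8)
The plan is to build each $D_i$ by gluing coordinate derivations, exploiting the fact that the defining relation $D_i(\tau_j)=\delta_{ij}$ is affine and hence survives convex combinations. I first produce local candidates. Fix $i$ and let $\overline{H_0}$ be the closure of the codimension one boundary manifold $H_0=\{\tau_{i,0}=0,\ \tau_{k,0}>0\ (k\neq i)\}$; every $o\in\overline{H_0}$ satisfies $\tau_{i,0}(o)=0$. For such an $o$ put $I_o\defi\{k\mid\tau_{k,0}(o)=0\}\ni i$. Since $\tau$ is a family of boundary superfunctions, the remark following \autoref{def:5.10} lets me augment $(\tau_k)_{k\in I_o}$ to a coordinate system on a neighbourhood $V_o$ of $o$; after shrinking $V_o$ I may also assume $\tau_{k,0}>0$ on $V_o$ for every $k\notin I_o$. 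Let $D_i^o$ be the even coordinate derivation $\der/{\tau_i}$ with respect to this system. Then $D_i^o$ is defined on $V_o$ and $D_i^o(\tau_k)=\delta_{ik}$ for all $k\in I_o$.

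Next I glue. The $V_o$ cover $\overline{H_0}$, and since $M_0$ is paracompact I pass to a locally finite subcover $(V_\alpha)$ with centres $o_\alpha$, carrying a smooth partition of unity $(\psi_\alpha)$ subordinate to it with $\sum_\alpha\psi_\alpha\equiv1$ on a neighbourhood $W_i$ of $\overline{H_0}$. Fixing a retraction $\gamma$ (these exist by \cite[Lemma~3.2]{RS83}) I lift to even superfunctions $\Psi_\alpha\defi\gamma^*(\psi_\alpha)$, so $\sum_\alpha\Psi_\alpha=1$ on $W_i$, and set $D_i\defi\sum_\alpha\Psi_\alpha D_i^{o_\alpha}$, each summand extended by zero outside $V_\alpha$. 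This is an even derivation on $W_i$. Because every centre $o_\alpha$ lies in $\overline{H_0}\subseteq\{\tau_{i,0}=0\}$, one has $i\in I_{o_\alpha}$ and hence $D_i^{o_\alpha}(\tau_i)=1$ for each $\alpha$; therefore $D_i(\tau_i)=\sum_\alpha\Psi_\alpha=1$ on $W_i$.

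It remains to arrange the off-diagonal relations. For $i\neq j$ and a chart $V_\alpha$ occurring in $D_i$ with $j\notin I_{o_\alpha}$, the function $\tau_{j,0}$ is bounded away from $0$ on $V_\alpha$. I therefore take the neighbourhoods $U^D_j$ of the codimension one faces thin enough—contained in sets $\{\tau_{j,0}<\epsilon_j\}$ with $\epsilon_j$ below these lower bounds, which is feasible by local finiteness—so that $U^D_i\cap U^D_j\cap V_\alpha=\emptyset$ whenever $j\notin I_{o_\alpha}$, and I set $U^D_i\subseteq W_i$. On the overlap $U^D_i\cap U^D_j$ only charts with $j\in I_{o_\alpha}$ contribute to the sum defining $D_i$, and for those $D_i^{o_\alpha}(\tau_j)=\delta_{ij}=0$. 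Hence $D_i(\tau_j)=\sum_\alpha\Psi_\alpha D_i^{o_\alpha}(\tau_j)=0$ there, which together with the diagonal case gives $D_i(\tau_j)=\delta_{ij}$ throughout.

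I expect this last step to be the only genuine obstacle: the local existence of the $D_i^o$ and the partition-of-unity gluing are routine, but one must choose the $U^D_j$ compatibly so that on each overlap no incompatible chart—one whose centre does not see $\tau_j$ vanish—spoils the Kronecker relation. The affine character of the condition is exactly what legitimises the convex combination, so beyond this neighbourhood bookkeeping no further estimates are required.
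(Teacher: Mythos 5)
Your construction is essentially the paper's: glue local coordinate derivations $\partial/\partial\tau_i$ with a partition of unity (the relation $D_i(\tau_j)=\delta_{ij}$ being affine, it survives the convex combination), and then shrink the neighbourhoods $U_i^D$ so that on each overlap $U_i^D\cap U_j^D$ only charts in which both $\tau_i$ and $\tau_j$ appear as coordinates contribute. The one step I would not accept as written is the choice of a single constant $\epsilon_j$ with $U_j^D\subseteq\{\tau_{j,0}<\epsilon_j\}$ lying below the lower bounds of $\tau_{j,0}$ on all offending charts: there may be infinitely many such charts, and local finiteness of the cover does not prevent their lower bounds $\inf_{V_\alpha}\tau_{j,0}$ from accumulating at $0$ (the closures of two faces can approach each other at infinity in a noncompact $M_0$), so no uniform $\epsilon_j>0$ need exist. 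The fix is to thin $U_j^D$ locally rather than uniformly, exactly as the paper does: remove from a neighbourhood of the face the union $\bigcup_\alpha\supp\phi_\alpha$ over all $\alpha$ whose chart does not see $\tau_j$ vanish; this union is closed by local finiteness, so the result is still an open neighbourhood of the face, and on the overlap the remaining partition functions sum to $1$. With that adjustment your argument is complete and coincides with the paper's proof; the remaining differences (covering only a neighbourhood of $\overline{H_0}$ instead of all of $M_0$, and explicitly lifting the partition of unity by $\gamma^*$) are cosmetic.
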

We call a family $D$ which satisfies these conditions a \emph{family of boundary superderivations} for $\tau$. Observe that $D$ is \emph{not} uniquely determined. 

\begin{proof}
	Let $o\in M_0$ and choose a coordinate neighbourhood $U$ with $o\in U_0$. Let $\tau_{U}$ be the subfamily of $\tau$ of all boundary functions which vanish at any point of $U_0$:
		\begin{align*}
			\tau_i\in\tau_U\iff\exists o'\in U_0:\tau_{i,0}(o')=0.
		\end{align*}
		Possibly after shrinking $U$, we may assume that $\tau_U$ can be completed to a coordinate system on $U$.
		
		Since $o$ was arbitrary, we can choose a locally finite covering $\lr(U^\alpha)_{\alpha\in A}$ of $M$ with such coordinate neighbourhoods. On each $U^\alpha$ we supplement $\tau_{U^\alpha}$ to a coordinate system $x^\alpha=(\tau_{U^\alpha},\tilde x^\alpha)$ and define for $i=1,\dotsc,n$
		\begin{align*}
			D_i^\alpha\defi
				\begin{cases}
					\der/{x_s^\alpha}	& x_s^\alpha=\tau_i\in\tau_{U^\alpha},\\
					0					& \text{otherwise.}
				\end{cases}
		\end{align*}
		
		This means $D_i^\alpha(\tau_j)=\delta_{ij}$ for $\tau_j\in\tau_{U^\alpha}$. Now we choose a partition of unity $\lr(\phi_\alpha)_{\alpha\in A}$ subordinate to $\lr(U^\alpha)_{\alpha\in A}$ and glue these local derivations to
		\begin{align*}
			D_i\defi\sum_{\alpha\in A}\phi_\alpha D_i^\alpha,\quad i=1,\dotsc,n.
		\end{align*}
		
		It remains to define the neighbourhoods $U_i^D$. We set for each $i$
		\begin{align*}
			B_i\defi\left\{\alpha\in A\,\big|\,0\in\tau_{i,0}(U_0^\alpha)\right\},\quad
			C_i\defi\left\{\alpha\in A\,\big|\,0\notin\tau_{i,0}(U_0^\alpha)\right\}
		\end{align*}
		to define
		\begin{align*}
			U_i^D
				\defi\bigcup_{\beta\in B_i}U_0^\beta\bigg\backslash\bigcup_{\alpha\in C_i}\supp\phi_\alpha
				=\bigcup_{\beta\in B_i}\bigcap_{\alpha\in C_i}U_0^\beta\Big\backslash\supp\phi_\alpha.
		\end{align*}
		
		The so defined sets are open, since the covering was locally finite. By construction $\supp\phi_\alpha\cap U_i^D\cap U_j^D=\emptyset$ for all $\alpha\in C_i\cup C_j$, hence
		\begin{align*}
			1|_{U_i^D\cap U_j^D}
				=\sum_{\alpha\in A}\phi_\alpha|_{U_i^D\cap U_j^D}
				=\sum_{\alpha\in B_i\cap B_j}\phi_\alpha|_{U_i^D\cap U_j^D}.
		\end{align*}
		
		Note that $\alpha\in B_i\cap B_j$ implies $\tau_i,\tau_j\in\tau_{U^\alpha}$, hence $D_i^\alpha(\tau_j)=\delta_{ij}$. With this observation we finish the proof by calculating on $U_i^D\cap U_j^D$
		\begin{align*}
			D_i(\tau_j)|_{U_i^D\cap U_j^D}
				&=\sum_{\alpha\in A}\phi_\alpha|_{U_i^D\cap U_j^D}D_i^\alpha(\tau_j)
				=\sum_{\alpha\in B_i\cap B_j}\phi_\alpha|_{U_i^D\cap U_j^D}D_i^\alpha(\tau_j)\\
				&=\sum_{\alpha\in B_i\cap B_j}\phi_\alpha|_{U_i^D\cap U_j^D}\delta_{ij}
				=\delta_{ij}.\qedhere
		\end{align*}
\end{proof}

For $j\in\multi(n,0)$ we define $D^j\defi D_1^{j_1}\dotsm D_n^{j_n}$ and the reduced multi-index $j\mathord\downarrow\defi(j_1\mathord\downarrow,\dotsc,j_n\mathord\downarrow)$, where $s\mathord\downarrow\defi\max(s-1,0)$ for $s\in\NN_0$.

\begin{theorem}[Change of variables formula]\label{thm:trafo}\pdfbookmark[2]{Changes of variables for Berezin densities 1}{trafo1}
	Let $\omega\in\Ber M$ be a compactly supported Berezin density and let $\gamma,\gamma'$ be retractions on $M$. Then
	\begin{align*}
		\int_\lr(N,\gamma')\omega=\int_\lr(N,\gamma)\omega+\!\sum_{H\in B(\gamma^*(\rho))}\sum_{j\in J_H}(-1)^{s(\dim N)+s(\dim H)}\int_\lr(H,\gamma_H)\big(\omega_j.D^{j\downarrow}\big)\big|_{H,\gamma^*(\rho)},
	\end{align*}
	where $s(\dotsm)$ was defined in \autoref{def:3.3},  $D=(D_1,\dotsc,D_n)$ is a family of boundary superderivations for $\gamma^*(\rho)$ as in \autoref{la:5.17}, and
	\begin{align*}
		\omega_j\defi\fr 1/{j!}\left(\gamma'^*(\rho)-\gamma^*(\rho)\right)^j\omega.
	\end{align*}
\end{theorem}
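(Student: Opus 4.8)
The plan is to reduce the assertion to the local coordinate formula of \autoref{thm:5.4} and then to convert the resulting bulk integrals into genuine boundary integrals by the classical Fundamental Theorem of Calculus, precisely as in \autoref{ex:5.6}. First I would observe that every operation occurring on both sides---the fibre integrals $\int_\lr(N,\gamma)$ and $\int_\lr(N,\gamma')$, multiplication by the fixed nilpotent superfunction $\fr 1/{j!}(\gamma'^*(\rho)-\gamma^*(\rho))^j$, the action of $D^{j\downarrow}$, and the restriction $(\,\cdot\,)|_{H,\gamma^*(\rho)}$---is $\RR$-linear in $\omega$. Hence, by a compactly supported partition of unity on $M$, it suffices to treat an $\omega$ supported in a single coordinate neighbourhood $U$. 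After shrinking $U$, I may assume it is adapted to the corner structure: the boundary functions vanishing on $U_0$ form a subfamily $\rho_{H_0}$ which extends to a classical coordinate system $u_0=(\rho_{H_0},\tilde u_0)$, and I set $u=\gamma^*(u_0)$ and complete to $x=(u,\xi)$.

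By \autoref{def:3.7} one has $\int_\lr(N,\gamma')\omega-\int_\lr(N,\gamma)\omega=\int_{N_0}\bigl(\gamma'_!\omega-\gamma_!\omega\bigr)$, and on $U_0$ \autoref{thm:5.4} rewrites the integrand as $\sum_{i\neq0}\fr 1/{i!}\gamma_!\bigl(\omega(\gamma'^*(u_0)-\gamma^*(u_0))^i.\diff u^i\bigr)$. By \eqref{eq:diffactberexplicit} and the definition of $\gamma_!$, each term $\fr 1/{i!}\gamma_!(\cdots)$ equals, up to the sign $(-1)^{s(\dim N)+|i|}$, a density $\diff{u_0}^i(g_i)\,|du_0|$ with $g_i$ compactly supported, and the domain is $N_0\cap U_0=\{\rho_{H_0}>0\}$ with the tangential coordinates $\tilde u_0$ unconstrained. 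I then integrate each $\diff{u_0}^i g_i$ one variable at a time. A nonzero derivative in a tangential direction integrates a total derivative of a compactly supported function over all of $\RR$ and so contributes nothing; hence only those $i$ survive whose tangential entries vanish. In a normal direction $\rho_s$, the Fundamental Theorem turns $\int_0^\infty\diff{\rho_s}^{i_s}(\cdots)\,d\rho_s$ into $-\diff{\rho_s}^{i_s-1}(\cdots)|_{\rho_s=0}$ when $i_s\geq1$, and leaves an unrestricted half-line integral when $i_s=0$. Consequently, for each subset $S$ of the normal directions the surviving multi-indices are exactly $i=j\in J_{H_0^S}$, where $H_0^S=\{\rho_s=0\mid s\in S\}$ is a boundary manifold, and the outcome is a boundary evaluation on $H_0^S$ carrying the reduced derivatives $\diff{\rho_{H_0}}^{j\downarrow}$. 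Summing the partition-of-unity pieces and over all such $S$ reproduces the double sum $\sum_{H\in B(\gamma^*(\rho))}\sum_{j\in J_H}$.

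It remains to identify each surviving boundary contribution with the asserted summand $(-1)^{s(\dim N)+s(\dim H)}\int_\lr(H,\gamma_H)(\omega_j.D^{j\downarrow})|_{H,\gamma^*(\rho)}$. Here I would invoke \autoref{la:5.16} to write $\int_{H_0}{\gamma_H}_!\bigl((\,\cdot\,)|_{H,\gamma^*(\rho)}\bigr)=(-1)^{s(\dim N)+s(\dim H)}\int_{H_0}\bigl(\gamma_!(\,\cdot\,)\bigr)|_{H_0,\rho}$, which is precisely the classical boundary density produced above. The prefactor $\omega_j=\fr 1/{j!}(\gamma'^*(\rho)-\gamma^*(\rho))^j\omega$ reproduces the coefficient $\fr 1/{i!}(\gamma'^*(u_0)-\gamma^*(u_0))^i\omega$ for $i=j$, because for $j\in J_{H_0}$ the monomial involves only the normal components $\gamma^*(\rho_{H_0})$. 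Finally, since $D_i(\gamma^*(\rho_j))=\delta_{ij}$ near the boundary, the operator $D^{j\downarrow}$ acts on $H$ exactly as the transversal coordinate derivatives $\diff u^{j\downarrow}$, so that $(\omega_j.D^{j\downarrow})|_{H,\gamma^*(\rho)}$ descends under ${\gamma_H}_!$ to the reduced-derivative density found by the Fundamental Theorem.

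The main obstacle is this last matching step. The coordinate computation is governed by the specific fields $\diff u^i$, whereas the invariant formula uses the glued superderivations $D_i$ of \autoref{la:5.17}, which need not be coordinate fields and are pinned down only by $D_i(\gamma^*(\rho_j))=\delta_{ij}$. I therefore have to prove that $(\omega_j.D^{j\downarrow})|_{H,\gamma^*(\rho)}$ is independent both of the admissible choice of $D$ and of the tangential coordinates $\tilde u_0$: only the transversal derivatives of $\omega_j$ along $H$, of the reduced orders $j\downarrow$, may enter, and these are fixed by the normalisation of $D$ together with the restriction to $H$. Showing that any tangential ambiguity in $D^{j\downarrow}$ is killed upon multiplication by $(\gamma'^*(\rho)-\gamma^*(\rho))^j$ and restriction to $H$---while at the same time accounting for the signs $s(\dim N)$, $s(\dim H)$, $(-1)^{|i|}$, and the $(-1)^{|S|}$ coming from the $|S|=\codim H_0^S$ applications of the Fundamental Theorem---is the technical heart of the argument.
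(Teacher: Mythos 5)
Your overall route is the same as the paper's: localise by a partition of unity, pass to coordinates adapted to the corner structure, apply \autoref{thm:5.4}, convert the bulk terms into boundary terms by the Fundamental Theorem of Calculus (tangential derivatives dying by compact support), and identify the resulting classical boundary densities via \autoref{la:5.16}. Up to that point the proposal is sound, modulo one elided verification: passing from the model corner $\RR_+^k\times\RR^{p-k}$ to a general adapted chart requires checking that the chart isomorphism is compatible with the boundary supermanifolds $H$, the induced retractions $\gamma_H$, the restriction $(\cdot)|_{H,\gamma^*(\rho)}$ and the action of differential operators; the paper's Step~3 consists entirely of this (routine but necessary) bookkeeping.

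The genuine gap is in the step you yourself flag as the technical heart, and the resolution you propose there would fail. You set out to prove that the restricted density $(\omega_j.D^{j\downarrow})|_{H,\gamma^*(\rho)}$ is independent of the admissible choice of $D$, the tangential ambiguity being ``killed upon multiplication by $(\gamma'^*(\rho)-\gamma^*(\rho))^j$ and restriction to $H$''. This is false. In adapted coordinates one has $D_i=\partial_{u_i}+\sum_{l>k}D_i(x_l)\,\partial_{x_l}$, and the normalisation $D_i(\gamma^*(\rho_l))=\delta_{il}$ places no constraint on the tangential and odd components $D_i(x_l)$ along $H$; the nilpotent prefactor in $\omega_j$ merely truncates the odd degree of the coefficient and does not remove the resulting tangential derivative terms, which survive restriction to $H$ as nonzero densities. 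What is true—and what the paper actually proves in Step~2—is that only the \emph{integral} over $H$ is independent of $D$: writing $D^j=\partial_u^{(j,0)}+\sum_{l>k}A_l\circ\partial_{x_l}$, each correction satisfies $\int_{(H,\gamma_H)}\bigl((\omega.A_l).\partial_{x_l}\bigr)\big|_{H,\gamma^*(\rho)}=0$ because $\partial_{x_l}$ is tangential or odd and the integrand is a total derivative of a compactly supported quantity (\autoref{la:5.19}). So the claim you intend to establish must be weakened from an identity of densities on $H$ to an identity of integrals, and the mechanism is the vanishing of integrals of tangential total derivatives, not algebraic cancellation by the prefactor and the restriction.
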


Note $\omega_j=0$ if $j>\fr q/2$, so the sum over $J_H$ is finite. Observe further that there are no summands for $\codim H_0>\fr q/2$, if $q<2p$. Moreover, the Berezin density $\omega_j.D^{j\downarrow}$ is defined on $U_{i_1}^D\cap\dotsc\cap U_{i_k}^D$, where $\rho_{H_0}=(\rho_{i_1},\dotsc,\rho_{i_k})$. This set contains $H_0$, so the restriction makes sense. 

In general, there is no canonical choice for the boundary superderivations; given a super Riemannian metric on $M$, one might take boundary superderivations which are orthogonal to the boundary with respect to this metric (\cf \autoref{rk:5.16}\! \eqref{rk:5.16.2}\!).

\begin{proof}
	We will prove the formula in several steps.
	
	\emph{Step~1.}
		We suppose $M$ to be a superdomain, $M\subset\RR^{p,q}$, and $N$ to satisfy $N_0=(\RR_+^k\!\times\RR^{p-k})\cap M_0$. The boundary functions are chosen to be $\rho=(\pr_1,\dotsc,\pr_k)$. Furthermore, we consider a coordinate system $x=(u,\xi)$ with $u_0=(\pr_1,\dotsc,\pr_p)$ and $\gamma^*(u_0)=u$.
		\renewcommand{\qedsymbol}{} 
\end{proof}

\begin{lemma}\label{la:5.19}\mbox{}
 	\begin{enumerate}[(i)]
 		\item
 			For $s>k$ we have
 				$\int_\lr(N,\gamma)\omega.\der/{x_s}=0$.
 		\item
 			For $H\in B\bg(\gamma^*(\rho))$ and $s>k$ we have
 				$\int_\lr(H,\gamma_H)\big(\omega.\der/{x_s}\big)\big|_{H,\gamma^*(\rho)}=0$.
 		\item
 			If $i\in\multi(p,0)$ such that $i_s\neq 0$ for some $s>k$, then
 				$\int_\lr(N,\gamma)\omega.\diff u^j=0$.
 		\item
 			If $H\in B\bg(\gamma^*(\rho))$ and $j\in J_H$ we get (with $(j,0)\in\multi(p,0)$)
 			\begin{align*}
 				\int_\lr(N,\gamma)\omega.\diff u^{(j,0)}
 					=(-1)^{s(\dim N)+s(\dim H)}\int_\lr(H,\gamma_H)\big(\omega.\diff u^{j\downarrow}\big)\big|_{H,\gamma^*(\rho)}.
 			\end{align*}
 	\end{enumerate}
\end{lemma}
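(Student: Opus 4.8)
The plan is to push all four assertions down to ordinary analysis on $N_0=(\RR_+^k\!\times\RR^{p-k})\cap M_0$ by means of $\gamma_!$. Writing $\omega=f|Dx|$ with $f=\sum_\nu\gamma^*(f_\nu)\xi^\nu$ and abbreviating $c\defi f_\lr(1,\dotsc,1)$, the definition of $\gamma_!$ turns every integral $\int_\lr(N,\gamma)(\,\cdot\,)$ into a Lebesgue integral over $N_0$ of a derivative of $c$. Three ingredients drive the computation: the explicit local form \eqref{eq:diffactberexplicit} of the action, which gives $\omega.\diff u^i=\pm(\diff u^i f)|Dx|$ and $\omega.\der/{x_s}=\pm(\der/{x_s}f)|Dx|$; the fact that even derivations act coefficientwise, $(\diff u^i f)_\nu=\diff{u_0}^i f_\nu$, so that $\gamma_!(\omega.\diff u^i)=\pm(\diff{u_0}^i c)|du_0|$; and the fundamental theorem of calculus on $N_0$. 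Since $\omega$ is compactly supported, $c$ is too, and all contributions at infinity vanish.

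For (i) I distinguish cases by the parity of $x_s$, $s>k$. If $s>p$, then $\der/{x_s}=\der/{\xi_{s-p}}$ is odd and the coefficient of $\xi^\lr(1,\dotsc,1)$ in $\der/{\xi_{s-p}}f$ vanishes identically---it would require a second power of $\xi_{s-p}$ in $f$---so $\gamma_!(\omega.\der/{x_s})=0$. If $k<s\le p$, then $\gamma_!(\omega.\der/{u_s})=\pm(\der/{u_{s,0}}c)|du_0|$, and since $s>k$ the $u_{s,0}$-direction is an unbounded $\RR$-factor of $N_0$, so integrating the total derivative over it gives zero. Part (ii) is the same computation on $H_0$: using \autoref{la:5.16} I rewrite $\int_\lr(H,\gamma_H)(\omega.\der/{x_s})|_{H,\gamma^*(\rho)}$ as $\pm\int_{H_0}(\gamma_!(\omega.\der/{x_s}))|_{H_0,\rho}$; the odd sub-case vanishes as before, while in the even sub-case the surviving direction $u_{s,0}$ (with $s>k$, hence $\pr_s\notin\rho_{H_0}$) is still unbounded in $H_0$, giving the same vanishing. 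For (iii) I pass to $c$ via $\gamma_!$, writing $\gamma_!(\omega.\diff u^i)=\pm(\diff{u_0}^i c)|du_0|$; since the even derivations commute I factor out $\diff{u_{s,0}}^{i_s}$ with $i_s\ge1$, $s>k$, and Fubini lets me perform the $u_{s,0}\in\RR$ integration first, which vanishes.

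Part (iv) is the main case and the only one producing a genuine boundary term. Reducing the left-hand side gives $\pm\int_{N_0}\diff{u_0}^{(j,0)}c\,|du_0|$, and I then apply Fubini and the fundamental theorem of calculus once in each of the $m=\codim H_0$ boundary directions $\rho_{H_0}=(\pr_{i_1},\dotsc,\pr_{i_m})$. Because each factor $u_{i_l,0}$ runs over $[0,\infty[$ and $c$ is compactly supported, each such integration replaces $\diff{u_{i_l,0}}^{j_{i_l}}$ by $-\diff{u_{i_l,0}}^{j_{i_l}-1}$ evaluated at $u_{i_l,0}=0$; the $m$ sign changes assemble to $(-1)^m$, the remaining directions are integrated over $H_0$, and what is left of the derivative is exactly $\diff{u_0}^{j\downarrow}$---the transversal derivatives with one order removed---restricted to $H_0$. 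Hence the left-hand side equals $\pm(-1)^m\int_{H_0}(\diff{u_0}^{j\downarrow}c)|_{H_0}\,|d\tilde u_0|$. For the right-hand side I apply \autoref{la:5.16} to the Berezin density $\omega.\diff u^{j\downarrow}$, turning ${\gamma_H}_!\big((\omega.\diff u^{j\downarrow})|_{H,\gamma^*(\rho)}\big)$ into $(-1)^{s(\dim N)+s(\dim H)}(\gamma_!(\omega.\diff u^{j\downarrow}))|_{H_0,\rho}$, and reduce to $c$ as before.

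The hard part is the sign bookkeeping, and the whole identity hinges on one cancellation. The two factors of $(-1)^{s(\dim N)+s(\dim H)}$---the one prescribed in the statement and the one produced by \autoref{la:5.16}---cancel, leaving on both sides the same real integral $\int_{H_0}(\diff{u_0}^{j\downarrow}c)|_{H_0}\,|d\tilde u_0|$ together with the common normalisation $(-1)^{s(p,q)}$ from $\gamma_!$ and the common parity-swap sign incurred in moving $|Dx|$ past the equally graded coefficients. It then remains to match the leftover action signs: the left-hand side carries $(-1)^{|(j,0)|}=(-1)^{|j|}$ and the $(-1)^m$ from the $m$ uses of the fundamental theorem of calculus, whereas the right-hand side carries only $(-1)^{|j\downarrow|}=(-1)^{|j|-m}$. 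Since $(-1)^{|j|}(-1)^m=(-1)^{|j|-m}$, the two agree exactly---this is the precise sense in which the order drop $j\mapsto j\downarrow$ absorbs the boundary signs, and it is the point where I would take the most care.
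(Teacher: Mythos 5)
Your proposal is correct and follows essentially the same route as the paper: reduce every integral to a classical Lebesgue integral of the top coefficient via $\gamma_!$, kill the unbounded directions using compact support and the fundamental theorem of calculus, and in (iv) convert the resulting integral over $H_0$ back to the boundary supermanifold via \autoref{la:5.16}, with the sign $(-1)^{\ell+|j|}=(-1)^{|j\downarrow|}$ absorbed exactly as you describe. The paper's proof is merely terser (parts (ii) and (iii) are dispatched as "similar"/"follows directly"), while you make the odd case of (i) and the sign cancellations explicit.
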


\begin{proof}[Proof of \autoref{la:5.19}]
	Write $\omega=f|Dx|$ and $f=\sum_\nu\gamma^*(f_\nu)\xi^\nu$. Since $\omega$ is compactly supported, the same is true for the $f_\nu$.
	
	(i)	If $k<s\leq p$, we deduce
		\begin{align*}
			\int_\lr(N,\gamma)\omega.\der/{u_s}=\pm\int_0^\infty do_1\dotsm\int_0^\infty do_k\int_{-\infty}^\infty do_{k+1}\dotsm\int_{-\infty}^\infty do_p\ \derf f_\lr(1,\dotsc,1)(o)/s.
		\end{align*}
		Thanks to the compact support, the right hand side vanishes. In the case of $s>p$ the claim is clear.
		
	(ii)
		Similar to (i).
		
	(iii)
		Follows directly from (i).
		
	(iv) 
		Write $\tilde u_0\defi(\pr_{k+1},\dotsc,\pr_p)$. As in \autoref{ex:5.6}\nobreak\eqref{ex:5.6.3} we apply the Fundamental Theorem of Calculus in each direction in which a derivation occurs. So the remaining integrals are the same as integrating along $H_0$. 
		
		In the following computation, we write $\ell\defi\codim H_0$.
		
		\begin{align*}
			\int_\lr(N,\gamma)\omega.\partial_u^j
				&=(-1)^{s(\dim N)+|j|}\int_0^\infty do_1\dotsm\int_0^\infty do_k\int_{-\infty}^\infty do_{k+1}\dotsm\int_{-\infty}^\infty do_p\,\partial_{u_0}^jh(o)\\
				&=(-1)^{s(\dim N)+\ell+|j|}\int_{H_0}(\partial_{u_0}^{j\downarrow} h)|_{H_0}\big|d\tilde u_0|_{H_0}\big|\\
				&=(-1)^{s(\dim N)+\ell+|j|}\int_{H_0}\big(\partial_{u_0}^{j\downarrow} h\,|du_0|\big)\big|_{H_0,\rho}\\
				&=\int_{H_0}\big(\gamma_!(\omega.\partial_u^{j\downarrow})\big)\big|_{H_0,\rho}
				=(-1)^{s(\dim N)+s(\dim H)}\int_{H_0}{\gamma_H}_!\big((\omega.\partial_u^{j\downarrow})|_{H,\gamma^*(\rho)}\big)\\
				&=(-1)^{s(\dim N)+s(\dim H)}\int_\lr(H,\gamma_H)\big(\omega.\partial_u^{j\downarrow}\big)\big|_{H,\gamma^*(\rho)}.
		\end{align*}
	
		In the second last equation, we applied \autoref{la:5.16}.
\end{proof}

\begin{proof}[Proof of \autoref{thm:trafo} (continued)]
		Remembering $\multi(k,0)\backslash\{0\}=\dot\bigcup_H J_H$, we see that \autoref{la:5.19}, applied to \autoref{thm:5.4}, proves \autoref{thm:trafo} in the case of $D_s=\der /{u_s}$ for $s=1,\dotsc,k$.
		
	\emph{Step~2.}
		We stay in the setting of Step~1, but now we suppose general $D=(D_1,\dotsc,D_k)$ such that $D_i\bg(\gamma^*(\rho_l))=\delta_{il}$ everywhere on $M$ for $i,l=1,\dotsc,k$. Then we have for $i=1,\dotsc,k$
		\begin{align*}
			D_i=\sum_{l=1}^{p+q}D_i(x_l)\der/{x_l}=\der/{u_i}+\sum_{l=k+1}^{p+q}D_i(x_l)\der/{x_l}.
		\end{align*}
		
		Therefore we get for $H\in B\bg(\gamma^*(\rho))$ and $j\in J_H$
		\begin{align*}
			D^j=\diff u^{(j,0)}+\sum_{l=k+1}^{p+q}A_l\circ\der/{x_l}
		\end{align*}
		for some (not further specified) differential operators $A_{k+1},\dotsc,A_{p+1}$. Applying \autoref{la:5.19}, we conclude
		\begin{align*}
			\int_\lr(H,\gamma_H)\omega.\lr(A_l\circ\der/{x_l})=\int_\lr(H,\gamma_H)(\omega.A_l).\der/{x_l}=0,
		\end{align*}
		hence
		\begin{align*}
			\int_\lr(H,\gamma_H)\omega.D^j=\int_\lr(H,\gamma_H)\omega.\diff u^{(j,0)}.
		\end{align*}
		
		This proves the claim for general $D=(D_1,\dotsc,D_k)$.

	\emph{Step~3.}
		Now suppose $M$ to be a coordinate neighbourhood with boundary functions $\rho=(\rho_1,\dotsc,\rho_k)$ such that $\gamma^*(\rho)$ can be completed to a coordinate system $x=\bg(\gamma^*(\rho),\tilde x)$. Suppose further $D=(D_1,\dotsc,D_k)$ to be such that $D_i\bg(\gamma^*(\rho_l))=\delta_{il}$ everywhere on $M$. Let $\varphi\colon\Omega'\to M$ be the inverse of a chart such that $\rho_i\circ\varphi_0=\pr_i,\ i=1,\dotsc,k$. We set $\Omega\defi\Omega'|_{\varphi_0^{-1}(N_0)}$. The restriction $\varphi\colon\Omega\to N$ is also an isomorphism. Furthermore, $\Omega_0=\Omega'_0\cap\RR_+^k\!\times\RR^{p-k}$, meaning that $\Omega'$ and $\Omega$ are as in Steps 1 and 2. We recognise that the boundary manifolds are sent to each other by $\varphi_0$, more explicitly
		\begin{align*}
			B_0(\pr_1,\dotsc,\pr_k)=\big\{\varphi_0^{-1}(H_0)\,\big|\,H_0\in B_0(\rho)\big\}.
		\end{align*}
		Indeed, $\varphi_0$ restricts to diffeomorphisms of the boundary manifolds.

		Similarly, $\varphi$ induces isomorphisms of the boundary supermanifolds. For each $H\in B\bg(\gamma^*(\rho))$ we denote by $H_\Omega\in B\big(\varphi^*(\gamma^*(\rho))\big)$ the supermanifold over $\varphi_0^{-1}(H_0)$, which corresponds to the retraction $\varphi^*(\gamma)$.

		Now fix $H\in B\bg(\gamma^*(\rho))$. We recall that $\iota_H^*(\tilde x)$ is a coordinate system on $H$. Moreover, $\iota_{H_\Omega}^*\bg(\varphi^*(\tilde x))$ is a coordinate system on $H_\Omega$, since $\iota_{H_\Omega}^*\big(\varphi^*(\gamma^*(\rho))\big)=0$. Therefore
		\begin{align*}
			\varphi_H^*\bg(\iota_H^*(\tilde x))\defi\iota_{H_\Omega}^*\bg(\varphi^*(\tilde x))
		\end{align*}
		defines an isomorphism $\varphi_H\colon H_\Omega\to H$ making the following diagram commute:
		\begin{align*}
			\begin{xy}
				\xymatrix{
					H\ar[r]^{\iota_H}									& M\\
					H_\Omega\ar[r]_{\iota_{H_\Omega}}\ar[u]^{\varphi_H}	& \Omega'\ar[u]_\varphi
				}
				\end{xy}
		\end{align*}
		
		The definition of $\varphi_H$ is also compatible with the pullback of retractions and with the restriction of Berezin densities. The former means just $\varphi_H^*(\gamma_H)=\bg(\varphi^*(\gamma))_{H_\Omega}$, which can be seen from the following diagram:
		\begin{align*}
			\begin{xy}
				\xymatrix@=7ex@!0{
																													& H \ar[rr]^{\iota_H}\ar[dd]|{\phantom{*}}^/-3ex/{\gamma_H}	&																&M \ar[dd]^\gamma	\\
					H_\Omega \ar[rr]_/3ex/{\iota_{H_\Omega}}\ar[dd]_{\varphi_H^*(\gamma_H)}\ar[ur]^{\varphi_H}	&																& \Omega'\ar[dd]^/-3ex/{\varphi^*(\gamma)}\ar[ur]^/-1ex/\varphi						\\
																													& H_0 \ar@{^{(}->}[rr]|{\phantom{*}}							&																&M_0				\\
					\varphi_0^{-1}(H_0) \ar@{^{(}->}[rr]\ar[ur]_/1ex/{\varphi_{H_0}}									&																& \Omega'_0 \ar[ur]_{\varphi_0}
			}\end{xy}
		\end{align*}
		
		The left, right, upper, lower, and rear squares commute, and hence this has also to be true for the front side. The uniqueness condition in \autoref{la:5.12} implies $\varphi_H^*(\gamma_H)=\bg(\varphi^*(\gamma))_{H_\Omega}$.

		With $\omega=fDx$, the compatibility of $\varphi_H$ with restriction of Berezin densities is easily derived from
		\begin{align*}
			\varphi_H^*\big(\omega\big|_{H,\gamma^*(\rho)}\big)
				&=\varphi_H^*\big(\iota_H^*(f)D\iota_H^*(\tilde x)\big)
				=\iota_{H_\Omega}^*\bg(\varphi^*(f))\,D\iota_{H_\Omega}^*\bg(\varphi^*(\tilde x))\\
				&=\big(\varphi^*(\omega)\big)\big|_{H_\Omega,\varphi^*(\gamma^*(\rho))}.
		\end{align*}

		These properties lead to
		\begin{align*}
			\int_\lr(H,\gamma_H)\omega|_{H,\gamma^*(\rho)}
				=\int_{\big(H_\Omega,(\varphi^*(\gamma))_{H_\Omega}\big)}\big(\varphi^*(\omega)\big)\big|_{H_\Omega,\varphi^*(\gamma^*(\rho))}.
		\end{align*}

		Using the compatibility of pullbacks with the action of differential operators on Berezin densities (\Cref{eq:13}), we see for $j\in J_H$
		\begin{align*}
			\int_\lr(H,\gamma_H)\big(\omega_j.D^{j\downarrow}\big)\big|_{H,\gamma^*(\rho)}
				&=\int_{\big(H_\Omega,(\varphi^*(\gamma))_{H_\Omega}\big)}\big(\varphi^*\big(\omega.D^{j\downarrow}\big)\big)\big|_{H_\Omega,\varphi^*(\gamma^*(\rho))}\\
				&=\int_{\big(H_\Omega,(\varphi^*(\gamma))_{H_\Omega}\big)}\big(\varphi^*(\omega).\varphi^*(D)^{j\downarrow}\big)\big|_{H_\Omega,\varphi^*(\gamma^*(\rho))}.
		\end{align*}
		
		We notice that $\varphi^*(D)=\big(\varphi^*(D_1),\dotsc,\varphi^*(D_k)\big)$ with $\varphi^*(D_i)=\varphi^*\circ D_i\circ\left.\varphi^*\right.^{-1}$ is a family of boundary derivations for $\varphi^*\bg(\gamma^*(\rho))=\bg(\varphi^*(\gamma))^*(\pr)$, where $\pr=(\pr_1,\dotsc,\pr_k)$. Furthermore,
		\begin{align*}
			\varphi^*(\omega_j)=\fr 1/{j!}\Big(\bg(\varphi^*(\gamma'))^*(\pr)-\bg(\varphi^*(\gamma))^*(\pr)\Big)^{j}\varphi^*(\omega).
		\end{align*}
		
		This means that we are able to use Step~2 to obtain
		\begin{align*}
			\int_\lr(N,\gamma')\omega
				&=\int_{(\Omega,\varphi^*(\gamma'))}\varphi^*(\omega)
				=\int_\lr(N,\gamma)\omega
					\,+\sum_H\,\sum_{\hidewidth j\in J_H\hidewidth}\,\pm\int_\lr(H,\gamma_H)\big(\omega_j.D^{j\downarrow}\big)\big|_{H,\gamma^*(\rho)}.
		\end{align*}

	\emph{Step~4.} 
		We prove the general formula. Let $o\in M_0$ and choose a coordinate neighbourhood $U$ with $o\in U_0$. Denote by $\rho_{U_0}$ the subfamily of $\rho$ of all boundary functions which vanish at any point of $U_0$. Let $D_U\defi\lr(D_i)_{\rho_i\in\rho_{U_0}}$ denote the corresponding family of boundary derivations.
		
		Possibly shrinking $U$ we may assume that $\gamma^*(\rho_{U_0})$ can be completed to a coordinate system on $U$. Shrinking $U$ further we can suppose $U$, together with $\rho_{U_0}$ and $D_U$, to be as in Step~3.

		Since $o$ was arbitrary, we can take a covering $\lr(U^\alpha)_\alpha$ of $M$ with such coordinate neighbourhoods. Let $\lr(\phi_\alpha)_\alpha$ be a partition of unity subordinate to $\lr(U^\alpha)_\alpha$. Then $\phi_\alpha\omega$ is compactly supported in $U^\alpha$, hence by Step~3
		\begin{align*}
			\int_\lr(N,\gamma')\omega
				&=\sum_\alpha\int_\lr(V^\alpha,\gamma')\phi_\alpha\omega\\
				&=\sum_\alpha\int_\lr(V^\alpha,\gamma)\phi_\alpha\omega\,+\sum_\alpha\,\sum_H\,\sum_{\hidewidth j\in J_H\hidewidth}\,\pm\int_\lr(H,\gamma_H)\big((\phi_\alpha\omega_j).D_{U^\alpha}^{j\downarrow}\big)\big|_{H,\gamma^*(\rho_{U^\alpha_0})},
		\end{align*}
		where $V^\alpha\defi U^\alpha\cap N$ and $H$ runs through $B\bg(U^\alpha,\gamma^*(\rho_{U^\alpha_0}))$.
		
		We recognise that
		\begin{alignat*}{2}
			B(U^\alpha_0,\rho_{U^\alpha_0})&=\Big\{U^\alpha_0\cap H_0\,&&\Big|\,H_0\in B(M_0,\rho),\ U^\alpha_0\cap H_0\neq\emptyset\Big\}
		\intertext{and similarly,}
			B\bg(U^\alpha,\gamma^*(\rho_{U^\alpha_0}))&=\Big\{H|_{U^\alpha_0\cap H_0}\,&&\Big|\,H\in B\bg(M,\gamma^*(\rho)),\ U^\alpha_0\cap H_0\neq\emptyset\Big\}.
		\end{alignat*}
		
		Further, one trivially has for $H_0\in B(M_0,\rho)$ with $U^\alpha_0\cap H_0\neq\emptyset$ that
		\begin{align*}
			J_{U^\alpha_0\cap H_0}^{\rho_{U^\alpha_0}}=\left\{(j_i)_{\rho_i\in\rho_{U^\alpha_0}}\middle|\,j\in J_{H_0}^\rho\right\}.
		\end{align*}
		
		Therefore, we get
		\begin{alignat*}{2}
			\int_\lr(N,\gamma')\omega
				&=\int_\lr(N,\gamma)\omega\,+&\,\sum_\alpha\,&\sum_H\,\sum_{\hidewidth j\in J_H\hidewidth}\,\pm\int_{(H|_{U^\alpha_0\cap H_0},\gamma_H)}\big((\phi_\alpha\omega_j).D^{j\downarrow}\big)\big|_{H,\gamma^*(\rho)},
		\intertext{where $H$ runs now through $B\bg(M,\gamma^*(\rho))$. The supports of the integrands are contained in $U^\alpha_0\cap H_0$, which implies}
			\int_\lr(N,\gamma')\omega
				&=\int_\lr(N,\gamma)\omega\,+&\,\sum_\alpha\,&\sum_H\,\sum_{\hidewidth j\in J_H\hidewidth}\,\pm\int_\lr(H,\gamma_H)\big((\phi_\alpha\omega_j).D^{j\downarrow}\big)\big|_{H,\gamma^*(\rho)}\\
				&=\int_\lr(N,\gamma)\omega\,+&&\sum_H\,\sum_{\hidewidth j\in J_H\hidewidth}\,\pm\int_\lr(H,\gamma_H)\big(\omega_j.D^{j\downarrow}\big)\big|_{H,\gamma^*(\rho)}.
		\end{alignat*}
	
	This completes the proof of \autoref{thm:trafo}.
\end{proof}

Sometimes, the following statement of the change of variables formula, which contains only ordinary boundary derivations, is more useful in applications.

\begin{corollary}\label{cor:5.21}\pdfbookmark[2]{Changes of variables for Berezin densities 2}{trafo1.5}
	Let $A=(A_1,\dotsc,A_n)$ be a family of boundary derivations on $M_0$ corresponding to $\rho$, \ie $A_i(\rho_l)=\delta_{il}$ on appropriate neighbourhoods. Then
	\begin{align*}
		\int_\lr(N,\gamma')\omega=\int_\lr(N,\gamma)\omega+\sum_{H_0\in B_0(\rho)}\sum_{j\in J_{H_0}}\int_{H_0}\big((\gamma_!\omega_j).A^{j\downarrow}\big)\big|_{H_0,\rho},
	\end{align*}
	where $\omega_j$ is at in \autoref{thm:trafo}.
\end{corollary}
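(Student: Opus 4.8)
The plan is to deduce \autoref{cor:5.21} directly from the change of variables formula \autoref{thm:trafo}, by pushing the boundary terms down from the boundary supermanifolds $H$ to their underlying boundary manifolds $H_0$ and transferring the super boundary derivations $D$ to the classical ones $A$ through the fibre integral $\gamma_!$.

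First I would rewrite the boundary terms of \autoref{thm:trafo}. Since $\int_\lr(H,\gamma_H)(\cdot)=\int_{H_0}{\gamma_H}_!(\cdot)$, applying \autoref{la:5.16} to the Berezin density $\omega_j.D^{j\downarrow}$ yields
\[
	{\gamma_H}_!\big((\omega_j.D^{j\downarrow})|_{H,\gamma^*(\rho)}\big)=(-1)^{s(\dim N)+s(\dim H)}\big(\gamma_!(\omega_j.D^{j\downarrow})\big)\big|_{H_0,\rho},
\]
so that the prefactor $(-1)^{s(\dim N)+s(\dim H)}$ occurring in \autoref{thm:trafo} cancels and the summand attached to $H$ collapses to $\int_{H_0}\big(\gamma_!(\omega_j.D^{j\downarrow})\big)|_{H_0,\rho}$. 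Under the canonical bijection $B\bg(\gamma^*(\rho))\leftrightarrow B_0(\rho)$, $H\mapsto H_0$, together with the identification $J_H=J_{H_0}$, the indexing sets of the two sums agree.

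It then remains to establish the compatibility
\[
	\gamma_!\big(\omega.D^{j\downarrow}\big)=(\gamma_!\omega).A^{j\downarrow}
\]
for a convenient family $D$. I would take each $D_i$ to be an even super vector field that is $\gamma$-related to $A_i$, \ie $D_i\circ\gamma^*=\gamma^*\circ A_i$; such fields are obtained by gluing the local horizontal lifts $\sum_l\gamma^*(a_{il})\partial_{u_l}$ of $A_i=\sum_l a_{il}\partial_{u_{l,0}}$ with a partition of unity, and they satisfy $D_i(\gamma^*(\rho_l))=\gamma^*(A_i(\rho_l))=\delta_{il}$, hence form a legitimate family of boundary superderivations to which \autoref{thm:trafo} applies. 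The heart of the matter is the single-derivation identity $\gamma_!(\omega.D_i)=(\gamma_!\omega).A_i$, which I would check in adapted coordinates $x=(u,\xi)$ with $\gamma^*(u_0)=u$: writing $\omega=f|Dx|$ with $f=\sum_\nu\gamma^*(f_\nu)\xi^\nu$ and using $\omega.D_i=-\Lie_{D_i}\omega$ (\Cref{eq:diffactber}, $D_i$ even), the relation $D_i\circ\gamma^*=\gamma^*\circ A_i$ forces the $\partial_{u_l}$-coefficient of $D_i$ to equal $\gamma^*(a_{il})$, while any $\partial_{\xi_j}$-component of $D_i$ contributes nothing to the top coefficient $f_{(1,\dotsc,1)}$, as $\partial_{\xi_j}$ cannot produce the monomial $\xi_1\dotsm\xi_q$. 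Comparing with the classical identity $(\gamma_!\omega).A_i=-\Lie_{A_i}(\gamma_!\omega)$ then gives the claim. Iterating over $i$ and over the powers recorded in $j\downarrow$, using that the action is a right action so that $\omega.(A_iA_l)=(\omega.A_i).A_l$, promotes this to the displayed identity for $D^{j\downarrow}$ and $A^{j\downarrow}$.

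Substituting $\gamma_!(\omega_j.D^{j\downarrow})=(\gamma_!\omega_j).A^{j\downarrow}$ into the reduced boundary terms turns each summand into $\int_{H_0}\big((\gamma_!\omega_j).A^{j\downarrow}\big)|_{H_0,\rho}$, which is exactly the summand of \autoref{cor:5.21}; the independence of the boundary integral from the choice of boundary derivation, already contained in the proof of \autoref{thm:trafo}, ensures that the result holds for the originally prescribed $A$. The main obstacle is the compatibility lemma above, and within it the top-degree and sign bookkeeping showing that only the $\gamma$-related even part of $D_i$ survives the application of $\gamma_!$.
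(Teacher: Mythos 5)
Your argument is correct, and its skeleton coincides with the paper's: both proofs reduce \autoref{cor:5.21} to \autoref{thm:trafo} by means of \autoref{la:5.16}, which makes the sign $(-1)^{s(\dim N)+s(\dim H)}$ cancel and leaves one to match the action of the super boundary derivations $D$ under $\gamma_!$ with that of the classical ones $A$. The matching step, however, is implemented differently. The paper keeps $D$ and $A$ arbitrary (subject only to $D_i(\gamma^*(\rho_l))=\delta_{il}$ and $A_i(\rho_l)=\delta_{il}$) and shows, in adapted coordinates $x=(\gamma^*(\rho),\tilde x)$, that both reduce to the coordinate derivations $\partial_{u_s}$, resp.\ $\partial_{u_{s,0}}$, modulo terms whose contribution disappears upon restriction to $H_0$ and integration there (the leftover tangential parts of $A_i$ only produce divergences along $H_0$, as in Steps 2--3 of the proof of \autoref{thm:trafo}). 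You instead specialise $D_i$ to a $\gamma$-related lift of $A_i$, for which $\gamma_!(\omega.D_i)=(\gamma_!\omega).A_i$ holds \emph{exactly}, as an identity of densities on $M_0$, the vertical $\partial_{\xi_j}$-components being annihilated by $\gamma_!$ since they cannot produce the top $\xi$-monomial; iterating via the right-action property then gives $\gamma_!(\omega_j.D^{j\downarrow})=(\gamma_!\omega_j).A^{j\downarrow}$ and the formula follows by substitution. Your variant buys a pointwise identity and avoids the implicit integration by parts on $H_0$; the paper's variant has the merit of treating an arbitrary admissible pair $(D,A)$, which in particular re-confirms that the boundary terms of \autoref{thm:trafo} do not depend on the choice of $D$. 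One small remark: your closing appeal to the independence of the boundary integral from the choice of boundary derivation is superfluous --- since \autoref{thm:trafo} is valid for any admissible family $D$, applying it to your lifted family already yields the asserted formula for the prescribed $A$.
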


\begin{proof}
	One only needs to check for $H\in B\bg(\gamma^*(\rho))$ and $j\in J_H$
	\begin{align*}
		\big((\gamma_!\omega).A^j\big)\big|_{H_0,\rho}=(-1)^{s(\dim N)+s(\dim H)}{\gamma_H}_!\big((\omega.D^j)|_{H,\gamma^*(\rho)}\big),
	\end{align*}
	where $D=(D_1,\ldots,D_n)$ is a the family of boundary superderivations as in \autoref{thm:trafo}.
	
	This can be done locally. Write $\omega=fDx$ with $x=(u,\xi)=\bg(\gamma^*(\rho),\tilde x)$. Then, similarly to Step~2 of the proof of \autoref{thm:trafo}, one sees
	\begin{align*}
		D_i=\der/{u_s}+\sum_{l=k+1}^{p+q} a_l\der/{x_l},\quad A_i=\der/{u_{s,0}}+\sum_{l=k+1}^p b_l\der/{u_{l,0}}
	\end{align*}
	for some $s$ and $k=\codim H_0$, hence, using an analogous argument as in Step~3 of the proof of \autoref{thm:trafo},
	\begin{align*}
		(\gamma_!\omega).A_i\big|_{H_0,\rho}
			&=(\gamma_!\omega).\der/{u_{s,0}}\big|_{H_0,\rho}
			=\gamma_!\lr(\omega.\der/{u_s})\big|_{H_0,\rho}
			=\gamma_!\lr(\omega.D_i)\big|_{H_0,\rho}.
	\end{align*}
	
	It follows from \autoref{la:5.16} that
	\begin{align*}
		(\gamma_!\omega).A^j\big|_{H_0,\rho}
			&=\bg(\gamma_!(\omega.D^j))\big|_{H_0,\rho}
			=\pm{\gamma_H}_!\big((\omega.D^j)|_{H,\gamma^*(\rho)}\big).\qedhere
	\end{align*}
\end{proof}

\begin{examples}\label{ex:5.22}\mbox{}
	\begin{enumerate}[(i)]
		\item
			Let $N\subset M=\RR^{2,4}$ be the superdomain with $N_0=\ ]0,1[^2$ and $\gamma,\gamma'$ be retractions on $M$. As we have seen earlier, boundary functions for $N_0$ are given by $\rho=(\pr_1,\pr_2,1-\pr_1,1-\pr_2)$. Adequate boundary derivations might be $D=(\der/{u_1},\der/{u_2},-\der/{u_1},-\der/{u_2})$, where we choose $x=(u,\xi)=\big(\gamma^*(\pr_1),\gamma^*(\pr_2),\xi\big)$.
			
			We count the number of summands which can be non-zero. Since $q=4$ we obtain $1$ summand for each dimension $0$ boundary manifold, and $2$ summands for each of dimension $1$, resulting in $1+4\cdot2+4\cdot1=13$ summands.
		\item\label{ex:5.22.2}
			We reconsider  \autoref{ex:5.6}\! \eqref{ex:5.6.2}\! in light of the above theorem. Recall the notation introduced in the example. Let $\gamma$ be the retraction given by $x$ and $\gamma'$ be the retraction associated with $y$. We compute $\gamma^*(v_0)$ on $\Omega$:
			\begin{align*}
				\gamma^*(v_{1,0})
					&=\gamma^*\bg(u_{1,0}\cos(u_{2,0}))
						=u_1\cos(u_2)=\fr v_1/{1-\xi_1\xi_2}\\
						&=v_1\lr(1+\fr\eta_1\eta_2/{v_1^2+v_2^2}),\\
				\gamma^*(v_{2,0})
					&=v_2\lr(1+\fr\eta_1\eta_2/{v_1^2+v_2^2}).
			\end{align*}
			
			This shows that $\gamma$ can be continued to a retraction on $\Omega''$, where $\Omega''_0=\Omega_0\backslash\{0\}$. Unfortunately, $\Omega''$ cannot be considered as manifold with corners in the ambient space $\Omega_0$. 
			
			For this reason, let $\Omega_\varepsilon\subset\Omega$ for $0<\varepsilon<1$ be given by $\Omega_{\varepsilon,0}=\{(o_1,o_2)\mid \varepsilon^2<o_1^2+o_2^2<1\}$. We turn $\Omega_\varepsilon$ to a supermanifold with corners in $\Omega$ \via the boundary function $\rho$ given by
			\begin{align*}
				\rho=r-\varepsilon,\quad r=\sqrt{v_{1,0}^2+v_{2,0}^2}\,.
			\end{align*}
			Of course, $r=u_{1,0}$ on $\Omega'$.
			
			Since $q=2$, we do not need to find any boundary derivation (although it is easy to see that the radial operator $v_{1,0}\der/{v_{1,0}}+v_{2,0}\der/{v_{2,0}}$ is a boundary derivation). Let $f\in\OO(\Omega)$ be compactly supported. Using
			\begin{align*}
				\gamma'^*(\rho)-\gamma^*(\rho)
					&=\sqrt{v_1^2+v_2^2}-\sqrt{v_1^2+v_2^2}\lr(1+\fr\eta_1\eta_2/{v_1^2+v_2^2})\\
					&=-\fr\eta_1\eta_2/{\sqrt{v_1^2+v_2^2}}
					=-\fr\eta_1\eta_2/{\gamma^*(r)}
			\end{align*}
			we get by \autoref{cor:5.21}
			\begin{alignat*}{2}
				\int_\lr(\Omega,y)f|Dy|
					&=\lim_{\varepsilon\to0}\int_\lr(\Omega_\varepsilon,\gamma')&&f|Dy|\\
					&=\lim_{\varepsilon\to0}\int_\lr(\Omega_\varepsilon,\gamma)&&f|Dy|+\int_{\varepsilon S^1}\gamma_!\left(-\fr\eta_1\eta_2/{\gamma^*(r)}f|Dy|\right)\bigg|_{\varepsilon S^1,\rho}.
			\end{alignat*}
			
			For the application of $\gamma_!$ we need to use $|D\hat y|$ with $\hat y=\bg(\gamma^*(v_0),\eta)$. A simple calculation shows $\Da \hat y/y=1$. Furthermore, we recognise that $|dv_0|$ is the standard density on $\RR^2$. In \autoref{rk:5.16}\! \eqref{rk:5.16.2}\! we saw that this just means $|dv_0||_{\varepsilon S^1}=dS$, which leads to
			\begin{align*}
				\gamma_!\left(-\fr\eta_1\eta_2/{\gamma^*(r)}f|Dy|\right)\bigg|_{\varepsilon S^1,\rho}
					=-(-1)^{s(2,2)}\fr f_0/r dv_0\Big|_{\varepsilon S^1,\rho}
					=-(-1)^{s(2,2)}\fr f_0/\varepsilon dS.
			\end{align*}
			
			Since
			\begin{align*}
				\lim_{\varepsilon\to 0}\int_{\varepsilon S^1}\fr f_0/\varepsilon dS=2\pi f_0(0),
			\end{align*}
			we see again
			\begin{align*}
				\int_\lr(\Omega,y)f|Dy|
					=\int_\lr(\Omega'',\gamma)f|Dy|-(-1)^{s(2,2)}2\pi f_0(0).
			\end{align*}
	\end{enumerate}
\end{examples}

One may give a version of the change of variables formula for Berezin forms by considering induced orientations on the boundary manifolds. To do so, one has to fix an ordering of the boundary functions $\rho$ and has to keep track of the boundary orientations; on the boundary manifolds of dimension $0$, this leads to additional signs. 

We do not state the resulting formula in full generality, since it is somewhat cumbersome. However, in the case of a supermanifold with boundary (\ie for the case of only one boundary supermanifold), the theorem can be easily restated for Berezin forms, as follows. 

\begin{corollary}\pdfbookmark[2]{Changes of variables for Berezin forms}{trafo2}\label{cor:5.19}
	Let $U\subset M$ with smooth boundary $\partial U_0$ and let $\gamma,\gamma'$ be retractions on $M$. Then for compactly supported $\omega\in\Ber M$ we have
	\begin{align*}
		\int_\lr(U,\gamma')\omega
			=\int_\lr(U,\gamma)\omega
				-\pm\sum_{j=1}^{\lfloor\fr q/2\rfloor}\fr 1/{j!}\int_\lr(\partial_\gamma U,\partial\gamma)\big(\big(\big(\gamma'^*(\rho)-\gamma^*(\rho)\big)^j\omega\big).D^{j-1}\big)\big|_{\partial_\gamma U,\gamma^*(\rho)}.
	\end{align*}
	Here, $\rho$ is a boundary function for $U_0$ and $D$ is a boundary derivation corresponding to $\gamma^*(\rho)$. The sign $\pm$ is given by $(-1)^{s(p,q)+s(p-1,q)}$.
\end{corollary}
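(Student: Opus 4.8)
The plan is to run the proof of \autoref{thm:trafo} in the category of Berezin forms rather than densities, specialised to the present situation. Since $\partial U_0$ is smooth, $U_0$ is a manifold with corners possessing a single codimension-one boundary manifold, so we may take a single boundary function $\rho$; then $B_0(\rho)=\{\partial U_0\}$, the associated boundary supermanifold is $\partial_\gamma U$ with boundary retraction $\partial\gamma$, and the index set $J_H$ collapses to $\{j\in\NN\mid 1\le j\le\lfloor q/2\rfloor\}$ with reduced index $j\mathord\downarrow=j-1$. The starting point is the form version of \autoref{thm:5.4}, which is already available: choosing a classical coordinate system $u_0$ on $U_0$ with $u_{1,0}=\rho$ and setting $u=\gamma^*(u_0)$, it writes $\int_\lr(U,\gamma')\omega$ as $\int_\lr(U,\gamma)\omega$ plus the finite sum of the terms $\frac{1}{i!}\int_\lr(U,\gamma)\omega\,(\gamma'^*(u_0)-\gamma^*(u_0))^i.\diff u^i$.

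First I would establish the form analogues of the two technical lemmas behind \autoref{thm:trafo}. The form version of \autoref{la:5.16} identifies ${\gamma_H}_!(\omega|_{H,\gamma^*(\rho)})$ with $(\gamma_!\omega)|_{H_0,\rho}$ up to the sign $(-1)^{s(\dim N)+s(\dim H)}$, now using the restriction of Berezin forms from \autoref{rk:5.16} (the ordering ambiguity there is vacuous, as $\rho$ is a single function). The form version of \autoref{la:5.19} then reduces the bulk terms to boundary integrals: exactly as in \autoref{ex:5.6}\,\eqref{ex:5.6.3}, one applies the Fundamental Theorem of Calculus in the single normal direction $u_1$, using the compact support of $\omega$ to discard the contribution at $u_1=\infty$, thereby turning $\int_\lr(U,\gamma)\omega_j.\diff u^{(j,0)}$ into a boundary integral of $(\omega_j.\diff u^{j-1})|_{\partial_\gamma U,\gamma^*(\rho)}$. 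The replacement of the coordinate field $\der/{u_1}$ by a general boundary derivation $D$, and the globalisation by a partition of unity, are carried out verbatim as in Steps~2--4 of the proof of \autoref{thm:trafo}: these steps never see the form/density distinction, since $\gamma_!$, the action of differential operators, and the restriction to $H$ are defined identically in both settings and differ only by the orientation data.

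The only genuinely new ingredient, and the main obstacle, is the orientation bookkeeping; it accounts for the single sign by which the present formula differs from the density formula of \autoref{thm:trafo}. For densities the fibre integral and the boundary restriction use the absolute density $|d\tilde u_0|$, whereas for forms they carry an orientation, and on $\partial U_0$ one must use the boundary orientation induced from that of $M_0$. Locally $U_0=\{u_{1,0}>0\}$ oriented by $du_0=du_{1,0}\wedge\dotsb\wedge du_{p,0}$, so the outward normal is $-\der/{u_{1,0}}$ and the induced boundary orientation is $\iota_{-\partial_{u_{1,0}}}(du_0)=-d\tilde u_0$, where $\tilde u_0=(u_{2,0},\dotsc,u_{p,0})$. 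Relative to the naive orientation $d\tilde u_0$ implicit in the density computation, the boundary-oriented integral thus acquires exactly one factor $-1$; propagating this through the Fundamental Theorem of Calculus step shows that each boundary term of the form formula is $(-1)$ times its density counterpart, converting the sign $(-1)^{s(\dim N)+s(\dim H)}=(-1)^{s(p,q)+s(p-1,q)}$ of \autoref{thm:trafo} into the asserted $-(-1)^{s(p,q)+s(p-1,q)}$. Because there is a single boundary manifold, of codimension one, there are no corner intersections and hence none of the additional dimension-zero orientation signs alluded to before the statement, so this one-step sign computation finishes the proof.
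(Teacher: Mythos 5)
Your proof is correct and follows exactly the route the paper intends: the paper offers no formal proof of this corollary, only the remark that the extra minus sign arises because the boundary derivations define \emph{inner} normals, and your argument---rerunning the proof of \autoref{thm:trafo} in the one-boundary-function case for forms and tracking the induced boundary orientation through the single Fundamental-Theorem-of-Calculus step---is precisely the omitted verification. Your sign computation ($\iota_{-\partial_{u_{1,0}}}(du_0)=-d\tilde u_0$ since the outward normal is $-\partial_{u_{1,0}}$, hence one factor $-1$ per boundary term relative to the density formula) matches the paper's stated source of the sign.
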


The additional minus sign occurring in the above formula comes from the fact that the boundary derivations define \emph{inner} normals. 

In \autoref{ex:4.6}, we saw that it is important to choose the right immersion $\iota\colon\partial_\gamma U\to M$ to arrive at the usual formulation of Stokes's theorem. In the case of an arbitrary boundary supermanifold $\iota\colon\partial U\to M$, one can apply the corollary to show the following generalisation of Stokes's theorem (where, of course, additional boundary terms have to appear). 

\begin{corollary}\label{cor:stokes2}\pdfbookmark[2]{Stokes formula for a fixed boundary structure}{stokes2}
	Let $U\subset M$ be a supermanifold with boundary such that $\overline U_0$ be compact, and let $\omega=d\varpi$ be an exact Berezin form. Then
	\begin{align*}
		\int_\lr(U,\gamma')\omega
			=(-1)^{s(p,q)+s(p-1,q)}\bigg(
				(-1)^q\int_{\partial U}\iota^*(\varpi)
				-\sum_{j=1}^{\lfloor\fr q/2\rfloor}\int_{\partial U}\left(\omega_j.D^{j-1}\right)\big|_{\partial U,\tau}
				\bigg),
	\end{align*}
	where $\tau$ is chosen such that $\iota^*(\tau)=0$ and $\omega_j\defi\fr1/{j!}\bg(\gamma'^*(\tau_0)-\tau)^j\omega$.
\end{corollary}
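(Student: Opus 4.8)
The plan is to realise the prescribed boundary datum $(\partial U,\iota)$ as the boundary of $U$ \emph{associated with a suitably chosen auxiliary retraction} $\gamma$, apply the fixed-retraction Stokes's theorem (\autoref{thm:stokes}) for that $\gamma$, and then transport the result to the given retraction $\gamma'$ by the change-of-variables formula for Berezin forms (\autoref{cor:5.19}). Throughout, $D$ is understood as a boundary superderivation normalised by $D(\tau)=1$ near $\partial U_0$, exactly as in \autoref{cor:5.19}.

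First I would construct an auxiliary retraction $\gamma$ on $M$ adapted to $\tau$. Putting $\rho\defi\tau_0$, so that $\rho$ is a boundary function for $U_0$, the goal is to arrange $\gamma^*(\rho)=\tau$ on a neighbourhood of $\partial U_0$. This is possible because the body $\tau_0=\rho$ is a boundary function, so near $\partial U_0$ the even superfunction $\tau$ completes to the even part of a coordinate system; the retraction associated with such a completion satisfies $\gamma^*(\rho)=\tau$ locally, and one patches these into a global retraction on $M$ by a partition-of-unity argument of the kind used in \autoref{la:5.17} (together with the existence of retractions, \cite{RS83}). Since $\iota^*(\tau)=0$ by hypothesis, we then have $\iota^*(\gamma^*(\rho))=0$, so the uniqueness in \autoref{prop:5.11} — via \autoref{cor:5.13} and Diagram~\eqref{boundarycomm} — identifies the given data $(\partial U,\iota)$ with the boundary data $(\partial_\gamma U,\iota)$ induced by $\gamma$.

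With this $\gamma$ in hand, \autoref{thm:stokes} applies (here $\overline{U_0}$ is compact and $\partial_\gamma U=\partial U$) and yields
\[
	\int_{(U,\gamma)}\omega=\int_{(U,\gamma)}d\varpi=(-1)^{s(p,q)+s(p-1,q)+q}\int_{\partial U}\iota^*(\varpi);
\]
the boundary retraction $\partial\gamma$ is suppressed because $\partial U_0$ is compact, so $\iota^*(\varpi)$ is compactly supported and \autoref{retrcpt} makes the boundary integral independent of $\partial\gamma$. Next I would apply \autoref{cor:5.19} to the pair $(\gamma,\gamma')$ with the same boundary function $\rho=\tau_0$. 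Because $\gamma^*(\rho)=\tau$, the difference $\gamma'^*(\rho)-\gamma^*(\rho)=\gamma'^*(\tau_0)-\tau$ is precisely the one defining $\omega_j$, and the restriction becomes $(\,\cdot\,)|_{\partial U,\tau}$; once more invoking compactness and \autoref{retrcpt} to drop $\partial\gamma$, this gives
\[
	\int_{(U,\gamma')}\omega=\int_{(U,\gamma)}\omega-(-1)^{s(p,q)+s(p-1,q)}\sum_{j=1}^{\lfloor q/2\rfloor}\int_{\partial U}\big(\omega_j.D^{j-1}\big)\big|_{\partial U,\tau}.
\]
Substituting the Stokes identity for $\int_{(U,\gamma)}\omega$ and factoring out $(-1)^{s(p,q)+s(p-1,q)}$ produces exactly the asserted formula, the surviving $(-1)^q$ in front of $\int_{\partial U}\iota^*(\varpi)$ being the extra $q$ in the exponent from \autoref{thm:stokes}. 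If $\omega$ is not compactly supported on all of $M$ — as \autoref{cor:5.19} nominally requires — a routine cutoff by $\gamma'^*(\chi)$, with $\chi\in C_c^\infty(M_0)$ equal to $1$ on a neighbourhood of the compact set $\overline{U_0}$, reduces to the compactly supported case and alters neither side, since $\chi\equiv1$ wherever the $U$- and $\partial U$-integrals are fed.

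The genuinely new — and only delicate — step is the construction in the second paragraph: exhibiting one global retraction $\gamma$ whose induced boundary structure coincides with the prescribed $(\partial U,\iota)$, and verifying that the auxiliary data entering \autoref{cor:5.19} (the boundary superderivation $D$ with $D(\tau)=1$, the restriction $|_{\partial U,\tau}$, and the forms $\omega_j$) match the data in the target formula. Everything after that is the bookkeeping of the signs $s(p,q)$, $s(p-1,q)$, $(-1)^q$ already isolated in \autoref{thm:stokes} and \autoref{cor:5.19}, together with the compactness-driven independence from the boundary retraction furnished by \autoref{retrcpt}.
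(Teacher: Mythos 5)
Your overall strategy is exactly the derivation the paper intends (the paper itself offers no written proof of this corollary, only the remark that it follows from \autoref{cor:5.19}): realise $(\partial U,\iota)$ as $(\partial_\gamma U,\iota)$ for an auxiliary retraction $\gamma$ adapted to $\tau$, apply \autoref{thm:stokes} for $\gamma$, then pass to $\gamma'$ by \autoref{cor:5.19}. The sign bookkeeping, the use of \autoref{retrcpt} to suppress the boundary retraction, and the cutoff reducing to the compactly supported case all check out.

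The one place where your argument, as written, would fail is the construction of the adapted retraction. You propose to take the local retractions associated with completions of $\tau$ to coordinate systems (which do satisfy $\gamma_\alpha^*(\tau_0)=\tau$ on their domains) and to ``patch these into a global retraction by a partition-of-unity argument of the kind used in \autoref{la:5.17}''. But \autoref{la:5.17} glues \emph{derivations}, which form a sheaf of modules, so convex combinations $\sum_\alpha\phi_\alpha D_i^\alpha$ remain derivations and still satisfy the affine condition $D_i(\tau_j)=\delta_{ij}$. Retractions are algebra homomorphisms, and $\sum_\alpha\phi_\alpha\gamma_\alpha^*$ is not multiplicative, so this gluing scheme does not produce a retraction. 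The needed statement --- there exists a retraction $\gamma$ on a neighbourhood of $\overline{U_0}$ with $\gamma^*(\tau_0)=\tau$ near $\partial U_0$ --- is true, but it has to be proved the way existence of retractions is proved in \cite{RS83}: inductively along the filtration by powers of the nilpotent ideal $\mathcal J$, where at each stage the local lifts form a torsor under a soft sheaf of derivations of $C^\infty_{M_0}$ with values in $\mathcal J^{2k}/\mathcal J^{2k+2}$; the additional constraint $\gamma^*(\tau_0)=\tau$ near $\partial U_0$ cuts out the sub-torsor of derivations annihilating $\tau_0$ there, which is still soft, so the partition-of-unity gluing applies \emph{at that level}. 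With this repair (and noting that $\gamma^*(\tau_0)=\tau$ need only hold on a neighbourhood of $\partial U_0$, which is all that the restricted boundary terms and the identification $\partial_\gamma U=\partial U$ via \autoref{cor:5.13} see), your proof is complete.
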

Note that in this formula, the retraction $\gamma$ does not occur any longer.

\providecommand{\bysame}{\leavevmode\hbox to3em{\hrulefill}\thinspace}
\providecommand{\MR}{\relax\ifhmode\unskip\space\fi MR }
\providecommand{\MRhref}[2]{%
  \href{http://www.ams.org/mathscinet-getitem?mr=#1}{#2}
}
\providecommand{\href}[2]{#2}

\end{document}